\newcommand{\omegat}{\tilde{\omega}}
\newcommand{\ra}{\rightarrow}
\newcommand{\into}{\hookrightarrow}
\newcommand{\iso}{\stackrel{\sim}{\ra}}
\newcommand{\NN}{\mathbf{N}}
\newlength{\ownl}
\newcommand{\Wconj}{W^{\operatorname{explicit}}}
\newcommand{\WBT}{W^{\operatorname{BT}}}
\newcommand{\Dpst}{\operatorname{D_{pst}}}
\newcommand{\Art}{{\operatorname{Art}}}
\newcommand{\End}{{\operatorname{End}\,}}
\newcommand{\SW}{{\operatorname{sw}}}
\newcommand{\Frob}{{\operatorname{Frob}}}
\newcommand{\Gal}{{\operatorname{Gal}\,}}
\newcommand{\Hom}{{\operatorname{Hom}\,}}
\renewcommand{\Im}{{\operatorname{Im}\,}}
\newcommand{\ord}{{\operatorname{ord}}}
\newcommand{\WD}{{\operatorname{WD}}}
\newcommand{\Spec}{{\operatorname{Spec}\,}}
\newcommand{\ad}{{\operatorname{ad}\,}}
\newcommand{\gr}{{\operatorname{gr}\,}}
\newcommand{\rec}{{\operatorname{rec}}}
\newcommand{\rk}{{\operatorname{rk}\,}}
\newcommand{\wt}{\widetilde}
\newcommand{\lps}{[\![}
\newcommand{\rps}{]\!]}
\newcommand{\Wcris}{W^{\cris}}
\newcommand{\Wexplicit}{W^{\operatorname{explicit}}}
\newcommand{\cris}{{\operatorname{cris}}}
\newcommand{\loc}{{\operatorname{loc}}}
\newcommand{\ab}{{\operatorname{ab}}}
\newcommand{\univ}{{\operatorname{univ}}}
\newcommand{\A}{{\mathbb{A}}}
\newcommand{\C}{{\mathbb{C}}}
\newcommand{\F}{{\mathbb{F}}}
\newcommand{\Q}{{\mathbb{Q}}}
\newcommand{\T}{{\mathbb{T}}}
\newcommand{\Z}{{\mathbb{Z}}}
\renewcommand{\O}{{\mathcal{O}}}
\newcommand{\swsigma}{{\sigma}}
\newcommand{\pibar}{\overline{{\pi}}}
\newcommand{\tI}{\widetilde{{I}}}
\newcommand{\tQ}{\widetilde{{Q}}}
\newcommand{\tS}{\widetilde{{S}}}
\newcommand{\tT}{\widetilde{{T}}}
\newcommand{\tv}{{\widetilde{{v}}}}
\newcommand{\varepsilonbar    }{\overline{\varepsilon}}
\newcommand{\Qbar}{{\overline{\Q}}}
\newcommand{\Fbar}{{\overline{\F}}}
\newcommand{\To}{\longrightarrow}\newcommand{\m}{\mathfrak{m}}
\newcommand{\psibar}{\overline{\psi}}
\newcommand{\onto}{\twoheadrightarrow}
\newcommand{\isoto}{\stackrel{\sim}{\To}} 
 \newcommand{\St}{\operatorname{St}}
\newcommand{\bigO}{\mathcal{O}}
\newcommand{\sq}{\square}
\newcommand{\taubar}{\overline{\tau}}
\newcommand{\bb}{\mathbb} 
\newcommand{\mc}{\mathcal}
\newcommand{\mf}{\mathfrak}
\newcommand{\cC}{\mathcal{C}}
\newcommand{\cF}{\mathcal{F}}
\newcommand{\cG}{\mathcal{G}}
\newcommand{\cO}{\mathcal{O}}
\newcommand{\cS}{\mathcal{S}}
\newcommand{\cT}{\mathcal{T}}
\newcommand{\rhobar}{\overline{\rho}} 
\newcommand{\chibar}{\overline{\chi}} 
\newcommand{\rbar}{{\bar{r}}}
\newcommand{\rbarwtvL}{\rbar|_{G_{L_{\wt{v}}}}}
\newcommand{\GL}{\operatorname{GL}}
\newcommand{\PGL}{\operatorname{PGL}}
\newcommand{\HT}{\operatorname{HT}}
\newcommand{\glo}{\operatorname{gl}}
 \newcommand{\Qp}{\Q_p}
 \newcommand{\Zp}{\Z_p}
\newcommand{\Ql}{\Q_l} 
\newcommand{\Qpbar}{{\overline{\Q}_p}}
\newcommand{\Qpbartimes}{{\overline{\Q}^\times_p}}
\newcommand{\Fpbar}{{\overline{\F}_p}}
\newcommand{\PSL}{\operatorname{PSL}}
\newcommand{\Sym}{\operatorname{Sym}}
\newcommand{\dR}{\mathrm{dR}}
 \newtheorem{ithm}{Theorem}
\newtheorem{athm}{Theorem}[section]
\newtheorem{acor}[athm]{Corollary}
\newtheorem{aprop}[athm]{Proposition}
\newtheorem{alemma}{Lemma}[athm]
\newtheorem{thm}{Theorem}[subsection]
\newtheorem{cor}[thm]{Corollary}
\newtheorem{lem}[thm]{Lemma} 
\newtheorem{prop}[thm]{Proposition}
 \theoremstyle{definition}
\newtheorem{conjecture}[thm]{Conjecture} \theoremstyle{definition}
\newtheorem{defn}[thm]{Definition} \theoremstyle{remark}
\newtheorem{rem}[thm]{\bf Remark}
\newtheorem{remark}[thm]{\bf Remark}
\newtheorem{para}[thm]{\bf} 
\newtheorem{apara}[athm]{\bf}
\numberwithin{equation}{subsection}
\theoremstyle{definition}
\begin{document}
\title[The Breuil--M\'ezard Conjecture]{The Breuil--M\'ezard Conjecture for Potentially Barsotti--Tate representations.}

\author{Toby Gee} \email{toby.gee@imperial.ac.uk} \address{Department of
  Mathematics, Imperial College London} \author{Mark Kisin} \email{kisin@math.harvard.edu} \address{Department of
  Mathematics, Harvard University}   \thanks{The first author was
  partially supported by a Marie Curie Career Integration Grant, and by an
  ERC Starting Grant. The second author was partially supported
  by NSF grant DMS-0701123.}  \subjclass[2000]{11F33.}
\begin{abstract}We prove the Breuil--M\'ezard conjecture for
  2-dimensional potentially Barsotti--Tate representations of the
  absolute Galois group $G_K$, $K$ a finite extension of $\Qp$, for
  any $p>2$ (up to the question of determining precise values for the
  multiplicities that occur). In the case that $K/\Qp$ is unramified,
  we also determine most of the multiplicities. 
  We then apply these results to the weight part of
  Serre's conjecture, proving a variety of results including the
  Buzzard--Diamond--Jarvis conjecture.
\end{abstract}
\maketitle
\tableofcontents

\section*{Overview.}\label{sec:overview}

The Breuil--M\'ezard conjecture (\cite{breuil-mezard}) predicts 
the Hilbert--Samuel multiplicity of the special fibre of a deformation 
ring of a mod $p$ local Galois representation. Its motivation is to
give  
a local explanation for the multiplicities seen in Hecke algebras and spaces of modular forms; 
in that sense it can be viewed as an avatar for the hoped for $p$-adic local Langlands correspondence.
For $\GL_2(\Qp),$ the conjecture was mostly proved 
in \cite{kisinfmc}, using global methods and the
$p$-adic local Langlands correspondence  and was used 
in that paper to deduce modularity lifting theorems.

The conjecture was originally formulated for 2-dimensional
representations of $G_{\Qp}$ (the absolute Galois group of $\Qp$) with
a restriction on the Hodge--Tate weights of the deformations under
consideration, but the formulation extends immediately to the case of
unrestricted regular Hodge--Tate weights. In fact, 
there is a natural generalisation of the Breuil--M\'ezard conjecture for
continuous representations 
$\rbar:G_K\to\GL_2(\Fpbar)$ for any finite extension $K/\Qp$
- see \cite{kisinICM}, and section \ref{subsec: deformation rings and types} below. 
\footnote{In fact, it is possible to formulate a
  generalisation for representations of arbitrary dimension; see
  Section~4 of~\cite{emertongeerefinedBM}.}

There is currently no known generalisation of the $p$-adic local
Langlands correspondence to $\GL_2(K)$, $K\ne \Qp$, and it is
accordingly not possible to use the local methods of \cite{kisinfmc}
to prove the conjecture in greater generality.  
The main
idea of the present paper is that one can \emph{use} the modularity
lifting theorems proved in \cite{kis04} and \cite{MR2280776} (by a
completely different method, unrelated to $p$-adic local Langlands) to
prove the Breuil--M\'ezard conjecture for all potentially
Barsotti--Tate deformation rings. As a byproduct of these arguments,
we are also able to prove the Buzzard--Diamond--Jarvis conjecture
(\cite{bdj}) on the weight part of Serre's conjecture for Hilbert
modular forms, as well as its  generalisations to arbitrary totally real fields conjectured
in~\cite{MR2430440} and~\cite{gee061}.

\section{Introduction.}\label{sec:intro}
Fix finite extensions $K/\Qp$ and $E/\Qp$, the latter (which
will be our coefficient field) assumed
sufficiently large. Let $E$ have ring of integers $\cO$, uniformiser
$\pi$, and residue field $\F$, let $k$ be the residue field of $K$,
and fix a continuous representation $\rbar:G_K\to\GL_2(\F)$. Given a
Hodge type $\lambda$ and an inertial type $\tau$ (see
Section~\ref{subsec: deformation rings and types} below for the
precise definitions of these notions, and of the other objects
recalled without definition in this introduction), there is a
universal lifting $\cO$-algebra $R_\rbar^{\square,\lambda,\tau}$ for
potentially crystalline lifts of $\rbar$ of Hodge type $\lambda$ and
Galois type $\tau$. The Breuil--M\'ezard conjecture predicts the
Hilbert--Samuel multiplicity $e(R_\rbar^{\square,\lambda,\tau}/\pi)$
in terms of the representation theory of $\GL_2(\cO_K)$. More precisely, a
result of Henniart attaches to $\tau$ a smooth, irreducible,
finite-dimensional $E$-representation $\sigma(\tau)$ of $\GL_2(\cO_K)$
via the local Langlands correspondence, and there is also an algebraic
representation $W_\lambda$ of $\GL_2(\cO_K)$ associated to
$\lambda$. Let $L_{\lambda,\tau}\subset W_\lambda\otimes\sigma(\tau)$
be a $\GL_2(\cO_K)$-invariant lattice; then the general shape of the
Breuil--M\'ezard conjecture is that for all $\lambda$, $\tau$ we
have \[e(R_\rbar^{\square,\lambda,\tau}/\pi)=\sum_{\sigma}n_{\lambda,\tau}(\sigma)\mu_\sigma(\rbar),\]
where $\sigma$ runs over the irreducible mod $p$ representations of
$\GL_2(k)$, $n_{\lambda,\tau}(\sigma)$ is the multiplicity of $\sigma$
as a Jordan--H\"older factor of $L_{\lambda,\tau}/\pi$, and the
$\mu_\sigma(\rbar)$ are non-negative integers, depending only on
$\rbar$ and $\sigma$ (and not on $\lambda$ or $\tau$). 

One can view the conjecture as giving infinitely many equations
(corresponding to the different possibilities for $\lambda$ and
$\tau$) in the finitely many unknowns $\mu_\sigma(\rbar)$, and it is
easy to see that if the conjecture is completely proved, then the
$\mu_\sigma(\rbar)$ are completely determined (in fact, they are
determined by the equations for $\tau$ the trivial representation and
$\lambda$ ``small'', and are zero unless $\sigma$ is a predicted Serre
weight for $\rbar$ in the sense of \cite{gee061}). 
Our main theorem (see Corollary~\ref{cor: main result for pot BT}) 
is the following result which establishes the conjecture in the
potentially Barsotti--Tate case ($\lambda=0$ in the terminology above).
\begin{ithm}\label{thm: intro version of main abstract result}Suppose
  that $p>2$. Then there are uniquely determined non-negative integers
  $\mu_\sigma(\rbar)$ such that for all inertial types $\tau$, we
  have \[e(R_\rbar^{\square,0,\tau}/\pi)=\sum_{\sigma}n_{0,\tau}(\sigma)\mu_\sigma(\rbar).\]Furthermore,
  the $\mu_\sigma(\rbar)$ enjoy the following properties.
  \begin{enumerate}
  \item 
    $\mu_\sigma(\rbar)\ne 0$ if and only if $\sigma$ is a predicted Serre weight
    for $\rbar$.  
  \item If $K/\Qp$ is unramified and $\sigma$ is regular
    , then
    $\mu_\sigma(\rbar)=e(R_\rbar^{\square,\sigma}/\pi)$, where
    $R_\rbar^{\square,\sigma}$ is the crystalline lifting ring of Hodge type
    determined by $\sigma$. If
    furthermore $\sigma$ is Fontaine--Laffaille regular
    , then
    $\mu_\sigma(\rbar)=1$ if $\sigma$ is a predicted Serre weight for
    $\rbar$, and is $0$ otherwise.
  \end{enumerate}

\end{ithm}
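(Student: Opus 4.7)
The plan is to deduce the theorem from a Taylor--Wiles--Kisin patching construction, using the potentially Barsotti--Tate modularity lifting theorems of \cite{kis04} and \cite{MR2280776} as the key input that sidesteps the absence of a $p$-adic local Langlands correspondence for $\GL_2(K)$ when $K\neq\Qp$.

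First I would globalise $\rbar$: choose a suitable CM (or totally real) field $F$ together with a definite quaternion algebra (or a unitary group), with a distinguished place $\wt{v}\mid p$ at which $F_{\wt{v}}\cong K$, and an irreducible modular Galois representation $\rbar_{\mathrm{glob}}$ whose restriction to $G_{F_{\wt{v}}}$ is the given $\rbar$, with carefully controlled behaviour at all other finite places. Applying the Taylor--Wiles--Kisin patching machine to spaces of algebraic modular forms, I would construct a patched module $M_\infty$, finite over a formally smooth $\cO$-algebra $R_\infty$, carrying a compatible action of $\GL_2(\cO_K)$ and of the local framed deformation ring at $\wt{v}$. For each inertial type $\tau$, evaluating $M_\infty$ against a $\GL_2(\cO_K)$-stable $\cO$-lattice $L_{0,\tau}\subset\sigma(\tau)$ yields a module which, by classicality together with the modularity lifting theorems of \cite{kis04, MR2280776}, is supported on and maximal Cohen--Macaulay over $R_\rbar^{\square,0,\tau}$ (extended by formal variables). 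For each Serre weight $\sigma$, define $\mu_\sigma(\rbar)$ as the appropriately normalised Hilbert--Samuel multiplicity of the analogous patched quotient attached to $\sigma$ itself. Additivity of Hilbert--Samuel multiplicities along the Jordan--H\"older filtration of $L_{0,\tau}/\pi$ then yields
\[
e(R_\rbar^{\square,0,\tau}/\pi)=\sum_\sigma n_{0,\tau}(\sigma)\mu_\sigma(\rbar),
\]
and uniqueness of the $\mu_\sigma(\rbar)$ follows because, as $\tau$ varies over principal-series and cuspidal types, the resulting system of equations has unique solution in the unknowns $\mu_\sigma(\rbar)$.

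The additional properties are then deduced as follows. For (1): if $\mu_\sigma(\rbar)\neq 0$ the patched module is nonzero on the locus of crystalline deformations of Hodge type $\sigma$, producing a modular global lift with the prescribed local type at $\wt{v}$, hence the desired local crystalline lift of $\rbar$. For (2): combine (1) with weight-elimination results (ruling out non-predicted $\sigma$), together with the converse direction showing existence of companion forms, and hence of crystalline lifts, for every predicted weight. For (3): when $K/\Qp$ is unramified and $\sigma$ is regular, Fontaine--Laffaille theory identifies $R_\rbar^{\square,\sigma}$ with an explicit and well-understood deformation ring, so one can read off $e(R_\rbar^{\square,\sigma}/\pi)$ directly; in the Fontaine--Laffaille regular case the ring is in fact formally smooth, forcing $\mu_\sigma(\rbar)\in\{0,1\}$, with non-vanishing characterised by the existence of a Fontaine--Laffaille lift, which by (2) is equivalent to $\sigma$ being a predicted weight.

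The hardest step will be constructing the patched module in sufficient generality: one must globalise $\rbar$ so that \emph{every} potentially Barsotti--Tate type $\tau$ is captured simultaneously, and verify that the resulting module is faithful on each of the associated quotients of $R_\infty$. This requires a careful combination of the modularity lifting theorems of \cite{kis04} and \cite{MR2280776} (including their non-minimal versions) with classicality statements relating automorphic forms to the local deformation rings, and is where the potentially Barsotti--Tate hypothesis becomes essential — outside this range the requisite modularity lifting theorems are not available. A secondary obstacle, for part~(3), is the explicit matching of the weights appearing on the patching side with the Buzzard--Diamond--Jarvis predicted weights, which must be carried out via a Fontaine--Laffaille calculation.
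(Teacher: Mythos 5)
Your overall strategy --- globalise $\rbar$, patch, use potentially Barsotti--Tate modularity lifting to get faithfulness of the patched module over the local deformation ring, define $\mu_\sigma(\rbar)$ as a normalised multiplicity of a weight-$\sigma$ patched piece, and conclude by additivity plus uniqueness of the solution to the resulting linear system --- is the paper's strategy. But there is a concrete gap at the globalisation step. You posit a global $\rbar_{\mathrm{glob}}$ with ``a distinguished place $\wt{v}\mid p$ at which $F_{\wt{v}}\cong K$'' and ``carefully controlled behaviour at all other finite places.'' The globalisation is carried out by potential automorphy (Appendix A), and those methods cannot arrange for $p$ to have a single place above it, nor can they decouple the behaviour at the several places above $p$: one ends up with $F^+_v\cong K$ and $\rbar|_{G_{F_{\tv}}}$ an unramified twist of the \emph{same} local $\rbar$ at \emph{every} $v\mid p$. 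Consequently the patching argument does not produce the equation $e(R_\rbar^{\square,0,\tau}/\pi)=\sum_\sigma n_{0,\tau}(\sigma)\mu_\sigma(\rbar)$ but rather its $n$-fold product, $\prod_{v\mid p}e(R_\rbar^{\square,0,\tau_v}/\pi)=\sum(\prod_v n_{0,\tau_v}(\sigma_v))\mu'_\sigma(\rbar)$, i.e.\ a solution of the \emph{tensor power} of the system of equations. Extracting from this a solution in non-negative integers of the original system is a genuine extra step: the paper does it via a linear-algebra lemma (Lemma \ref{lem: linear algebra lemma}) showing that if $\alpha$ is injective, $\alpha(v)\in\Z^I_{\ge 0}$ and $v^{\otimes n}\in(\Z^m)^{\otimes n}_{\ge 0}$, then $v\in\Z^m_{\ge 0}$, combined with the injectivity statement of Lemma \ref{lem: pot BT equations are well determined}. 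Your proposal is silent on this, and the paper explicitly notes (footnote in the introduction) that it does not seem possible to avoid it. Relatedly, your uniqueness claim (that principal series and cuspidal types suffice to pin down the $\mu_\sigma$) needs an argument: the reductions $\overline{\sigma(\tau)}$ for tame $\tau$ do not obviously span $R_\F(\GL_2(k))$ --- the twists of Steinberg must be produced as an explicit virtual combination of four principal-series types (Proposition 4.2 of \cite{breuildiamond}), and this spanning statement is exactly what makes the map $\alpha$ injective.

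A secondary point: you hedge between a definite quaternion algebra and a unitary group. For the main multiplicity formula either would do, but for part (2) the paper stresses that the inputs \cite{blggU2}, \cite{GLS12}, \cite{GLS13} rely on the absence of a parity restriction on weights for unitary groups, so the argument must be run on a unitary group (the quaternionic setting is treated only afterwards, in \S\ref{sec:BDJ}, by transporting the already-determined $\mu_\sigma$). Finally, note that the engine for faithfulness is not quite the bare modularity lifting theorems of \cite{kis04} and \cite{MR2280776}: the paper passes through potential diagonalizability (Lemma \ref{lem: pot BT implies pot diag} uses those two papers to show every potentially Barsotti--Tate representation is potentially diagonalizable) and then invokes the automorphy lifting theorem of \cite{BLGGT}/\cite{blggU2} for potentially diagonalizable representations; this repackaging is what lets the same patching statement also cover the regular crystalline weights needed for part (3).
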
 (See the introduction to~\cite{GLS13} for a discussion of the history
of the various definitions of predicted Serre weights for $\rbar$ and of their
equivalence, and see section~\ref{subsec: local Serre weights} below for the
precise definitions we are using.) We are able to apply this result and the techniques that we
use to prove it to the problem of the weight part of Serre's
conjecture. For any $\rbar:G_K\to\GL_2(\F)$ as above, we define $\WBT(\rbar)$ 
to be the set of weights $\sigma$ such that $\mu_\sigma(\rbar)>0$. It follows
from Theorem \ref{thm: intro version of main abstract result}(1) that
$\WBT(\rbar)$ is precisely the set of weights predicted by the 
Buzzard--Diamond--Jarvis conjecture and its generalisations 
(\cite{bdj}, \cite{MR2430440}, \cite{gee061}). We then prove the
following result (see Corollary \ref{cor: main BDJ theorem}),

\begin{ithm}\label{thm: intro version of BDJ}
 Let $p>2$ be
  prime, let $F$ be a totally real field, and let
  $\rhobar:G_F\to\GL_2(\Fpbar)$ be a continuous representation. Assume
  that $\rhobar$ is modular, that $\rhobar|_{G_{F(\zeta_p)}}$ is
  irreducible, and if $p=5$ assume further that the projective image
  of $\rhobar|_{G_{F(\zeta_p)}}$ is not isomorphic to $A_5$.

  For each place $v|p$ of $F$ with residue field $k_v$, let $\sigma_v$
  be a Serre weight of $\GL_2(k_v)$. Then $\rhobar$ is modular of
  weight $\otimes_{v|p}\sigma_v$ if and only if
  $\sigma_v\in\WBT(\rhobar|_{G_{F_v}})$ for all $v$.
\end{ithm}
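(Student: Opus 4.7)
The plan is to deduce the theorem from Theorem~\ref{thm: intro version of main abstract result} by means of the Taylor--Wiles--Kisin patching method, applied in a suitable definite unitary or quaternionic setting (after a solvable base change to arrange the global conditions required for patching). The hypotheses that $\rhobar|_{G_{F(\zeta_p)}}$ be irreducible, and for $p=5$ that its projective image not be $A_5$, are precisely those required for this construction to produce a patched module $M_\infty$ of algebraic automorphic forms. This $M_\infty$ will be a finite module over a power-series ring $R_\infty[[y_1,\ldots,y_q]]$, where $R_\infty$ is a completed tensor product over $\cO$ of unrestricted framed local deformation rings at a finite set of places (containing all $v\mid p$ and all places of ramification of $\rhobar$), and $M_\infty$ will be maximal Cohen--Macaulay over this power-series ring.

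For each tuple of Serre weights $\sigma = \otimes_{v\mid p}\sigma_v$, I would then introduce the weight functor $M_\infty(\sigma) := \Hom_{\prod_{v\mid p}\GL_2(\cO_{F_v})}(\sigma, M_\infty^\vee)$, and establish by a standard local-global argument that $\rhobar$ is modular of Serre weight $\otimes_{v\mid p}\sigma_v$ if and only if $M_\infty(\sigma)/\pi \ne 0$. For each tuple of inertial types $\tau = \otimes_{v\mid p}\tau_v$ one likewise considers $M_\infty(L_{0,\tau}) := \Hom_{\prod_{v\mid p}\GL_2(\cO_{F_v})}(L_{0,\tau}, M_\infty^\vee)$; this module is supported on the potentially Barsotti--Tate locus $\Spec(\widehat{\otimes}_{v\mid p} R^{\square,0,\tau_v}_{\rhobar|_{G_{F_v}}}) \subset \Spec R_\infty$, by the potentially Barsotti--Tate modularity lifting theorems of \cite{kis04} and \cite{MR2280776} together with local-global compatibility in the potentially BT case.

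Filtering $L_{0,\tau}$ by its reduction mod $\pi$ and passing to Jordan--H\"older constituents expresses the Hilbert--Samuel multiplicity $e(M_\infty(L_{0,\tau})/\pi)$ as $\sum_\sigma n_{0,\tau}(\sigma)\,e(M_\infty(\sigma)/\pi)$. On the other hand, since $M_\infty$ is faithful and maximal Cohen--Macaulay over $R_\infty[[y_\bullet]]$, the left-hand side equals (up to the fixed patching constants) the product over $v\mid p$ of the multiplicities $e(R^{\square,0,\tau_v}_{\rhobar|_{G_{F_v}}}/\pi)$, which by Theorem~\ref{thm: intro version of main abstract result} equals $\prod_{v\mid p}\sum_{\sigma_v} n_{0,\tau_v}(\sigma_v)\mu_{\sigma_v}(\rhobar|_{G_{F_v}})$. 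Comparing the two expressions and letting $\tau$ vary (so that the multiplicity matrices $n_{0,\tau}(\sigma)$ collectively separate the Serre weights), one concludes that $e(M_\infty(\sigma)/\pi)$ is commensurable with $\prod_{v\mid p}\mu_{\sigma_v}(\rhobar|_{G_{F_v}})$; in particular $M_\infty(\sigma) \ne 0$ if and only if $\mu_{\sigma_v}(\rhobar|_{G_{F_v}}) > 0$ for every $v\mid p$, which by definition is the condition $\sigma_v \in \WBT(\rhobar|_{G_{F_v}})$ for all $v$.

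The main obstacle I expect is not the global machinery, which is by now standard, but rather the support-theoretic step: one needs not merely that $M_\infty(L_{0,\tau})$ is killed by the ideal cutting out the potentially Barsotti--Tate locus, but that it has full support on every irreducible component of the special fibre of $\widehat{\otimes}_{v\mid p}R^{\square,0,\tau_v}_{\rhobar|_{G_{F_v}}}$ that carries a positive-dimensional automorphic contribution. This faithfulness input is precisely where the potentially Barsotti--Tate modularity lifting theorems of \cite{kis04} and \cite{MR2280776} are used in their strongest form, and where the auxiliary choices in the patching (Taylor--Wiles primes, Ihara-avoidance at bad places away from $p$) must be made compatibly with the Serre weight under consideration.
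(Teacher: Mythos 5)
Your strategy is essentially the one the paper follows: patch a space of quaternionic modular forms, break the patched module into Serre-weight pieces, use the potentially Barsotti--Tate modularity lifting theorems of \cite{kis04} and \cite{MR2280776} to obtain faithfulness over the product of the local potentially crystalline deformation rings, and then identify the resulting global multiplicities with the local $\mu_{\sigma_v}$ of Theorem \ref{thm: intro version of main abstract result} by letting $\tau$ vary and invoking uniqueness of the solution of the Breuil--M\'ezard system (this is exactly Proposition \ref{key patching prop for BDJ}, resting on Lemmas \ref{lem: linear algebra lemma} and \ref{lem: pot BT equations are well determined}). Your formulation via the functor $\sigma\mapsto\Hom(\sigma,M_\infty^\vee)$ is a cosmetic variant of the paper's filtration of $M_\infty\otimes\F$ by the modules $M_\infty^{\sigma_i}$.

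Two caveats. First, the hedge ``definite unitary or quaternionic'' is not innocent: patching on a unitary group only yields the unitary-group notion of being modular of a weight, whereas the theorem concerns the quaternionic notion, which the paper stresses is strictly more delicate (one cannot lift every global Serre weight to characteristic zero on a quaternion algebra). You must patch on the quaternion algebra itself, with Theorem \ref{thm: intro version of main abstract result} entering, as you use it, as the externally known solution of the system. Relatedly, no solvable base change of $F$ is permissible here: modularity of a given Serre weight does not obviously descend along such an extension (locality of the weight set is part of what is being proved), and none is needed, since the stated hypotheses on $\rhobar$ already suffice to run the patching over $F$. Second, because the quaternionic patching fixes determinants, for a given $\psi$ only types with $\det\tau_v=\psi|_{I_{F_v}}$ are available, so ``letting $\tau$ vary'' requires the refinement that tame-determinant types already determine the solution (the final assertion of Lemma \ref{lem: pot BT equations are well determined}) together with a vanishing argument for weights of the wrong central character; and passing between $S(V,\sigma)_\m$ and the type spaces $S_{\tau^p}(U,\sigma)_\m$ at the places where $D$ ramifies requires the mod $p$ lifting results of Vign\'eras (Lemma \ref{lem: mod p homs to types vanish implies zero quaternion algebra version}) and produces the ``compatible with $D$'' condition appearing in Corollary \ref{cor: main BDJ theorem}.
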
 In the case that $p$ is unramified in $F$
this proves the Buzzard--Diamond--Jarvis conjecture (\cite{bdj}) for
$\rhobar.$ More generally, by Theorem~\ref{thm: intro version of main abstract result}(1) (which
relies on the main result of~\cite{GLS13}), it proves the generalisations of the
Buzzard--Diamond--Jarvis conjecture to arbitrary totally real fields conjectured
in~\cite{MR2430440} and~\cite{gee061}. In particular, 
it shows that the set of weights for which
$\rhobar$ is modular depends only on the restrictions of $\rhobar$ to
decomposition groups at places dividing $p$, which was not previously
known. We remark that, in the case of indefinite quaternion algebras, 
another proof of the BDJ conjecture (which, like the proof given in this paper, relies on the version of the
conjecture for unitary groups proven in \cite{blggU2}, \cite{GLS12},
\cite{GLS13}) is given in \cite{Newton}. 

We emphasize that in the above Theorem, the definition of $\rhobar$ being modular of
some weight is in terms of quaternion algebras as in the Buzzard--Diamond--Jarvis conjecture  (see \cite{bdj} Defn.~2.1, Conj.~3.14 and \cite{geebdj}), 
rather than in terms of unitary groups as in \cite{blggU2}. The former statement is more subtle since the set of {\em local} Serre weights 
in the Buzzard--Diamond--Jarvis conjecture is attached to $\bar r$ by considering crystalline liftings while, for a quaternion algebra, 
one cannot lift every global Serre weight to characteristic zero.

We now describe some of the techniques of this paper in more detail.
In \S 2,3 we adapt the patching arguments of \cite{kisinfmc} to the case of 
a rank two unitary group over a totally real field $F^+.$ In the presence of a modularity 
lifting theorem, these yield a relationship between a patched Hecke module and 
a tensor product of the local deformation rings which appear in the Breuil-M\'ezard 
conjecture. In particular, the modularity lifting theorems proved in \cite{kis04} and 
\cite{MR2280776} imply that such a relationship holds in the (two dimensional) potentially 
Barsotti-Tate case. More generally, we show that such a relationship holds for representations which 
are ``potential diagonalizable'' in the sense of \cite{BLGGT}.

These arguments produce not a solution in non-negative integers
to the systems of equations that we seek, but rather a solution to a
product, over the primes $\mathfrak p|p$ of $F^+,$ of these systems of equations. 
We deduce that a solution for each individual system exists
by showing that such a solution is unique, and applying some linear algebra.\footnote{It does not seem possible to avoid dealing with a product 
of systems of equations by, for example, choosing $F^+$ so that $p$ is inert in $F^+$:
our methods require that we realize the local representation $\rbar$ globally. 
We do this via the potential automorphy techniques of \cite{BLGGT} and \cite{frankII} (see Appendix A), 
and these methods cannot ensure that $p$ is inert in $F^+.$ (In the case that
$\rbar$ is irreducible or decomposable, it is presumably possible to
avoid this by making use of CM forms, but they cannot handle the case
that $\rbar$ is reducible and indecomposable.)} 
It follows from the construction that $\mu_{\sigma}(\rbar) \neq 0$ if
and only if there are modular forms of weight $\sigma$ for the unitary
group used in the construction. Using this, together
with the results on Serre's conjecture for unitary groups proved in 
 \cite{blggU2}, \cite{GLS12}, \cite{GLS13}, we deduce the other
 claims in Theorem \ref{thm: intro version of main abstract result}.

In order to prove Theorem \ref{thm: intro version of BDJ} 
and the Buzzard--Diamond--Jarvis conjecture, we 
repeat these constructions in \S 4, in the setting of the cohomology of Shimura
curves associated to division algebras.  The uniqueness of the $\mu_\sigma(\rbar)$, 
implies that the multiplicities computed globally in this setting agree with those computed 
via unitary groups. On the other hand, as for unitary groups, by construction these multiplicities 
are non-zero if and only if there are modular forms of weight $\sigma$ for the quaternion algebra 
used in the construction. Theorem \ref{thm: intro version of BDJ} follows from this, and implies 
the BDJ conjecture and its generalisations when combined with Theorem 
\ref{thm: intro version of main abstract result}(1).

The argument comparing multiplicities for unitary groups and quaternion algebras 
seems to us to be analogous to the use of the trace formula to prove instances of functoriality for inner forms. 
We view the left hand side of the
equality \[e(R_\rbar^{\square,0,\tau}/\pi)=\sum_{\sigma}n_{0,\tau}(\sigma)\mu_\sigma(\rbar)\]
as being the ``geometric'' side, and the right hand side as the
``spectral'' side. Then the geometric side is manifestly the same in
the unitary group or Shimura curve settings, from which we deduce
that the spectral sides are also the same, and thus transfer the
proof of the Buzzard--Diamond--Jarvis conjecture from the unitary
group context to the original setting of \cite{bdj}. As mentioned above, 
the proof of Theorem \ref{thm: intro version of main abstract result}(1)  
uses the results of \cite{blggU2}, \cite{GLS12}, \cite{GLS13} which, in turn, rely crucially on the 
fact that there is no parity restriction on the weights of the modular forms on unitary groups.
Thus, it does not seem possible to prove Theorem \ref{thm: intro version of main abstract result}(1) 
and its consequences for the BDJ conjecture by working only with quaternion algebras. 

We would like to thank Frank Calegari, Fred Diamond, Matthew Emerton, Guy Henniart,
Florian Herzig and David Savitt for helpful conversations. We would
also like to thank Matthew Emerton and Florian Herzig for their helpful comments on an
earlier draft of this paper. T.G. would
like to thank the mathematics department of Northwestern University
for its hospitality in the final stages of this project.

\subsection{Notation} Fix an algebraic closure $\Qbar$ of $\Q$, and an
algebraic closure $\Qpbar$ of $\Qp$ for each prime $p$. Fix also an
embedding $\Qbar\into\Qpbar$ for each $p$. If $M$ is a finite
extension of $\Q$ or $\Qp$, we let $G_M$ denote its absolute Galois
group. If $M$ is a finite extension of $\bb{Q}_p$ for some $p$, we write $I_M$ for the
inertia subgroup of $G_M$. If $F$ is a number field and $v$ is a finite place of $F$ then we let $\Frob_v$
denote a geometric Frobenius element of $G_{F_v}$.

If $R$ is a local ring we write
$\mf{m}_{R}$ for the maximal ideal of $R$.
We write all matrix transposes on the left; so ${}^tA$ is the transpose
of $A$. 

Let $\varepsilon$ denote the $p$-adic cyclotomic character, and
$\varepsilonbar=\omega$ the mod $p$ cyclotomic character. 

If $K$ is a finite extension
of $\Qp$ for some $p$, we let $\rec$ be the local Langlands
correspondence of \cite{ht},
so that if $\pi$ is an irreducible $\Qpbar$-representation of $\GL_n(K)$, then
$\rec(\pi)$ is a complex
Weil--Deligne representation of the Weil group $W_K$. Choose an isomorphism
$\imath:\Qpbar\xrightarrow{\sim}\C$, and set
$r_p(\pi):=\imath^{-1}\circ\rec\circ\imath(\pi\otimes|\det|^{(1-n)/2})$; this is independent of
the choice of $\imath$. We let $\Art_K:K^\times\to W_K^{ab}$
be the isomorphism provided by local class field theory, which we
normalise so that uniformizers correspond to geometric Frobenius
elements. 
We will sometimes
identify characters of $I_K$ and of $\cO_K^\times$ via $\Art_K$ without comment.
If $F$ is a number field, we write $\Art_F$ for the global Artin map, normalized to be
compatible with the $\Art_{F_v}$.

\section{Potentially crystalline deformation rings.}
\label{subsec: deformation  rings and types}  
In this section we recall the formulation of the Breuil--M\'ezard conjecture, or 
rather its generalisation to finite extensions of $\Q_p$ (cf.~\cite{kisinICM}). 

We fix a finite extension $K/\Q_p$ with ring of integers $\O_K$ and
residue field $k$. 
Let $E \subset \Qpbar$ be a finite extension of $\Qp$
with ring of integers $\cO$ and residue field $\F$.  In particular, we
may regard $\F$ as a subfield of $\Fpbar,$ the residue field of
$\Qpbar.$ We assume throughout the paper that $E$ is sufficiently
large; in particular, we assume that $E$ contains the image of every
embedding $K\into\Qpbar$, and that various $\Qpbar$-representations
$\tau$, $\sigma(\tau)$ that we consider are in fact defined over $E$.

\subsection{The Breuil--M\'ezard conjecture}

Let $B$ be a finite local $E$-algebra, and $V_B$ be a finite free $B$-module, 
with a continuous potentially semistable action of $G_K$. Then  
$$ D_{\dR}(V_B) = (B_{\dR}\otimes_{\Q_p}V)^{G_K}$$
is a filtered $B\otimes_{\Q_p}K$-module which is free of rank $\rk_BV_B,$ 
and whose associated graded is projective over $B\otimes_{\Q_p}K.$ 
For an embedding $\varsigma: K \hookrightarrow E$ we denote by 
$\HT_{\varsigma}(V_B)$ the multiset in which the integer $i$ appears 
with multiplicity $\rk_B \gr^i(D_{\dR}(V_B)\otimes_{B\otimes_{\Q_p}K,\varsigma}B).$ 
We call the elements of $\HT_{\varsigma}(V_B)$ the Hodge-Tate numbers of $V_B$ with respect to $\varsigma.$
Thus, for example $\HT_\varsigma(\varepsilon)=\{ -1\}$.

Let $\Z^2_+$ denote the set of pairs $(\lambda_1,\lambda_2)$ of
integers with $\lambda_1\ge \lambda_2$. Fix $\lambda\in(\Z^2_+)^{\Hom_{\Qp}(K,E)},$ 
and suppose $\rk_B V_B = 2.$  
We say that $V_B$ has \emph{Hodge type} $\lambda$ if for 
each $\varsigma :K\into E$, the Hodge--Tate weights of $V_B$ with respect to $\varsigma$
are $\lambda_{\varsigma,1}+1$ and $\lambda_{\varsigma,2}.$
If $V_B$ is potentially crystalline and has Hodge type $0$, we say that $V_B$ is \emph{potentially Barsotti--Tate}. 
\footnote{Note that this is a slight abuse of terminology; however, we will have no reason 
to deal with representations with non-regular Hodge-Tate weights, and
so we exclude them from 
consideration.} (Note that it is more usual in the literature to say that $V_B$
is potentially Barsotti--Tate if it is potentially crystalline, and
$V_B^\vee$ has Hodge type $0$; in particular, all of our definitions
are dual to those of~\cite{breuil-mezard}, but of course our results
can be translated into their setting by taking duals.)

 An {\em inertial type} is a representation
$\tau:I_K\to\GL_2(E)$ with open kernel, with the property that
(possibly after replacing $E$ with a finite extension) $\tau$ may be
extended to a representation of the Weil group $W_K$. In particular, it is
semisimple and factors through a finite quotient of $I_K$.
We say that $V_B$ is of \emph{Galois type} $\tau$ if the traces of
elements of $I_K$ acting on $\Dpst(V_B)$ and $\tau$ are equal.

Let $\rbar: G_K\to\GL_2(\F)$ be a continuous representation.  Let
$R^\square_\rbar$ be the universal $\cO$-lifting ring of $\rbar$, so
that $R^\square_\rbar$ pro-represents the functor which assigns to a
local Artin $\cO$-algebra $R$ with residue field $\F$ the set of liftings of
$\rbar$ to a representation $G_K\to\GL_2(R).$

The following is a special case of one of the main results of \cite{kisindefrings}.
\begin{prop}\label{defrings}
  There is a unique (possibly zero) $p$-torsion free
  quotient $R_\rbar^{\square,\lambda,\tau}$ of $R^\square_\rbar,$ such that for any finite
  local $E$-algebra $B,$ and any $E$-homomorphism $x:R^\square_\rbar\to B$, the
  $B$-representation of $G_K$ induced by $x$ is potentially
  crystalline of Galois type $\tau$ and Hodge type $\lambda,$ if and
  only if $x$ factors through $R_\rbar^{\square,\lambda,\tau}$.

Moreover, $\Spec R_\rbar^{\square,\lambda,\tau}[1/p]$
is formally smooth and everywhere of dimension $4+[K:\Qp]$.
\end{prop}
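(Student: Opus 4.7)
The plan is to identify the rigid-analytic generic fibre of the desired quotient with the locus in $\Spf(R^\square_\rbar)^{\rig}$ where the universal representation has the prescribed type, and to show that this locus is a union of connected components; formal smoothness and the dimension formula then follow from a standard computation in Galois cohomology and $p$-adic Hodge theory. Concretely, work with the rigid space $X := \Spf(R^\square_\rbar)^{\rig}$ over $E$ together with its universal framed family of $G_K$-representations, and let $X^{\lambda,\tau} \subset X$ denote the subset of points at which the representation is potentially crystalline of Hodge type $\lambda$ and Galois type $\tau$. The main geometric input is that $X^{\lambda,\tau}$ is not merely a closed analytic subspace but a union of connected components of $X$. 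To prove this, construct an auxiliary moduli space $\cL^{\lambda,\tau}$ parametrising integral $p$-adic Hodge-theoretic data --- Breuil--Kisin modules (or strongly divisible modules) with the appropriate Frobenius, monodromy, Galois-descent, and Hodge filtration structure --- together with a framing. One produces a proper morphism $\cL^{\lambda,\tau} \to X$ whose set-theoretic image is $X^{\lambda,\tau}$, and which is a closed immersion by weak-admissibility arguments. Combining properness with a dimension count at a smooth point shows this closed immersion is also open, and so its image is a union of connected components.

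Granted this, define $R^{\square,\lambda,\tau}_\rbar$ as the quotient of $R^\square_\rbar$ cutting out the schematic closure of $X^{\lambda,\tau}$ inside $\Spec R^\square_\rbar$. Since $X^{\lambda,\tau}$ lives entirely in characteristic zero, the closure is automatically $p$-torsion free, and the required universal property is immediate: a homomorphism $R^\square_\rbar\to B$ factors through $R^{\square,\lambda,\tau}_\rbar$ if and only if its associated $E$-point lies in $X^{\lambda,\tau}$, which happens if and only if the corresponding $G_K$-representation has type $(\lambda,\tau)$. Uniqueness follows because any $p$-torsion free quotient of $R^\square_\rbar$ is determined by its set of $E$-valued points.

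For formal smoothness and the dimension formula, fix a closed point $x$ of $\Spec R^{\square,\lambda,\tau}_\rbar[1/p]$ corresponding to a representation $V_x$. Under the identification with $\cL^{\lambda,\tau}$, tangent vectors at $x$ are framed deformations of the associated filtered $(\varphi, N, \Gal(L/K))$-module keeping the Hodge and Galois type fixed; obstructions vanish because Frobenius, monodromy, and the discrete filtration data can be lifted along any Artinian thickening. A standard computation expresses the tangent space dimension as
\[\dim \ad V_x + \dim\bigl(\ad V_x/\Fil^0 \ad V_x\bigr) = 4 + [K:\Qp],\]
the first summand from the framing and the second from the $[K:\Qp]$-fold product of Hodge flag varieties (one $\PP^1$ per embedding $\varsigma: K\hookrightarrow E$, contributing dimension $1$ each).

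The hardest step is the first, showing that $X^{\lambda,\tau}$ is open and closed rather than merely closed. This is the geometric heart of \cite{kisindefrings} and requires the full theory of Breuil--Kisin modules together with the descent from $L$ to $K$; the remaining steps are standard deformation theory and Galois cohomology once one has this geometric input.
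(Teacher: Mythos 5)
The paper does not prove this proposition; it simply cites it as a special case of the main results of \cite{kisindefrings}, so the comparison here is with the argument of that reference, whose broad strategy (define the quotient as the closure of the right locus in the generic fibre, prove the right geometric statement about that locus via integral $p$-adic Hodge theory, then compute tangent spaces of the associated filtered modules) you have correctly identified.

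However, your geometric heart contains a genuine error. You claim that $X^{\lambda,\tau}$ is a union of connected components of $X=\Spf(R^\square_\rbar)^{\rig}$, i.e.\ open and closed in the \emph{whole} generic fibre of the unrestricted framed deformation ring. This is false: $\Spec R^\square_\rbar[1/p]$ is equidimensional of dimension $4+4[K:\Qp]$, while the proposition itself asserts that $X^{\lambda,\tau}$ has dimension $4+[K:\Qp]$, so for any $K$ the locus has strictly smaller dimension and cannot be open. Correspondingly, your justification (``combining properness with a dimension count at a smooth point shows this closed immersion is also open'') cannot work --- a closed immersion of strictly smaller dimension into an equidimensional space is never open, and no dimension count will rescue this. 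What is actually true, and what \cite{kisindefrings} proves, is the following two-step statement: (a) the locus of points that are potentially semistable over a fixed $L/K$ with Hodge--Tate weights in a fixed bounded range is \emph{closed} in $X$ --- this is the hard part, requiring the interpolation of Breuil--Kisin ($\gS$-) modules over the generic fibre; and (b) on this closed locus the Hodge type, the Galois type, and the condition $N=0$ are locally constant, so that $X^{\lambda,\tau}$ is open and closed \emph{inside the potentially semistable locus}, not inside $X$. Without (a) stated and proved as a closedness (not openness) assertion, your construction of the quotient does not get off the ground. A secondary gap: the universal property is required for all finite local (Artinian) $E$-algebras $B$, so it is not enough that the closure has the right $E$-valued points; one must identify the complete local rings of the quotient at closed points with the pro-representing rings of the type-$(\lambda,\tau)$ deformation functor, which is where your tangent-space and obstruction computation must actually be used, and which also underlies the uniqueness claim (uniqueness does not follow merely from the set of $E$-valued points of a $p$-torsion free quotient).
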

In the case that $\tau$ is the trivial representation, we will drop it
from the notation, and write $R_\rbar^{\square,\lambda}$ for
$R^{\square,\lambda,\tau}$.  
We will need the following simple lemma later.
\begin{lem}\label{lem: crystalline def ring unchanged by unramified twist}
  Let $\psibar:G_K\to\F^\times$ be an unramified character. Then the
  $\cO$-algebras  $R_\rbar^{\square,\lambda,\tau}$ and
  $R_{\rbar\otimes(\psibar\circ\det)}^{\square,\lambda,\tau}$ are isomorphic.
\end{lem}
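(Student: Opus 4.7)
The plan is to first lift $\psibar$ to a canonical unramified characteristic-zero character, then use this to twist representations, producing an isomorphism of universal lifting rings which preserves both the Hodge type and Galois type conditions.

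First I would note that since $\psibar$ is unramified, it factors through the procyclic quotient $G_K/I_K$ and is determined by $\psibar(\Frob_K) \in \F^\times$. Since $|\F^\times|$ is prime to $p$, the Teichm\"uller lift $[\psibar(\Frob_K)] \in \cO^\times$ defines an unramified character $\psi : G_K \to \cO^\times$ lifting $\psibar$. For any complete local Noetherian $\cO$-algebra $R$ with residue field $\F$, composing with $\cO \to R$ gives an unramified character $\psi_R : G_K \to R^\times$ lifting $\psibar$, and the rule $\rho \mapsto \rho \otimes (\psi_R \circ \det)$ is a functorial bijection between liftings of $\rbar$ to $R$ and liftings of $\rbar \otimes (\psibar \circ \det)$ to $R$, with inverse $\rho' \mapsto \rho' \otimes (\psi_R^{-1} \circ \det)$. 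This induces an $\cO$-algebra isomorphism $R^\square_\rbar \cong R^\square_{\rbar \otimes (\psibar \circ \det)}$.

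Next I would check that this isomorphism carries the quotient $R_\rbar^{\square,\lambda,\tau}$ to $R_{\rbar\otimes(\psibar\circ\det)}^{\square,\lambda,\tau}$. By the uniqueness in Proposition \ref{defrings}, it suffices to show that for any finite local $E$-algebra $B$ and any $E$-homomorphism $x : R^\square_\rbar \to B$, the $B$-representation $V_B$ is potentially crystalline of Hodge type $\lambda$ and Galois type $\tau$ if and only if $V_B \otimes (\psi \circ \det)$ is. The character $\psi \circ \det$ is itself unramified, hence crystalline, hence preserves ``potentially crystalline''; at each embedding $\varsigma : K \hookrightarrow E$ its Hodge--Tate weight is $0$, so twisting by it does not shift $\HT_\varsigma(V_B)$, and the Hodge type $\lambda$ is preserved. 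Finally, since $\psi \circ \det$ is unramified, $\Dpst(\psi \circ \det)$ carries the trivial $I_K$-action, so the traces of $I_K$ on $\Dpst(V_B \otimes (\psi \circ \det))$ coincide with those on $\Dpst(V_B)$, and the Galois type $\tau$ is preserved.

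There is no real obstacle here: the only small point requiring care is the existence of the characteristic-zero lift $\psi$, which is handled by the Teichm\"uller lift since $\F^\times$ has order prime to $p$. Once $\psi$ is in hand, the twist is a tautological isomorphism of lifting functors and the preservation of the $p$-adic Hodge-theoretic conditions is immediate from the fact that unramified characters are crystalline with trivial Hodge--Tate weights.
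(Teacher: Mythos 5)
Your proof is correct and follows essentially the same route as the paper: take the Teichm\"uller lift $\psi$ of $\psibar$, observe that twisting by this unramified (hence crystalline, trivial Hodge--Tate weight, trivially acting on inertia) character gives a functorial bijection on liftings preserving the Hodge type and Galois type conditions, and conclude via the universal property. The paper states this in two lines; you have simply filled in the routine verifications.
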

\begin{proof}
  Let $\psi$ denote the Teichm\"uller lift of $\psibar$; since this
  character is unramified, it is crystalline with all Hodge--Tate
  weights equal to $0$. If $r^\univ:G_K\to \GL_2(R_\rbar^{\square,\lambda,\tau})$ is the
  universal lift of $\rbar$, then $r^\univ\otimes(\psi\circ\det):G_K\to
  \GL_2(R_{\rbar}^{\square,\lambda,\tau})$ is the
  universal lift of $\rbar\otimes(\psibar\circ\det)$, as required.
\end{proof}

Let $\tau:I_K\to\GL_2(\Qpbar)$ be an inertial type, as above. 
We have the following theorem of Henniart (see the appendix to
\cite{breuil-mezard}).
\begin{thm}\label{thm: Henniart existence of types}
  There is a finite-dimensional irreducible
  $\Qpbar$-representation $\sigma(\tau)$ of $\GL_2(\cO_K),$ such that
  for any 2-dimensional Frobenius semi-simple representation
  $\tilde{\tau}$ of $\WD_K$,
  $(r_p^{-1}(\tilde{\tau})^\vee)|_{\GL_2(\cO_K)}$
  contains $\sigma(\tau)$ if and only if $\tilde{\tau}|_{I_K}\sim\tau$
  and $N=0$ on $\tilde{\tau}$. Furthermore, for all $\tilde{\tau}$ we have
  \[\dim_{\Qpbar}\Hom_{\GL_2(\cO_K)}(\sigma(\tau),r_p^{-1}(\tilde{\tau})^\vee)\le
  1.\]If $|k|>2$ then $\sigma(\tau)$ is uniquely determined.
\end{thm}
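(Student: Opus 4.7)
The plan is to proceed by case analysis on the isomorphism class of $\tau$ as a representation of $I_K$, matching each case to an explicit irreducible smooth $\Qpbar$-representation of $\GL_2(\cO_K)$ and then verifying the containment and multiplicity one statements. First I would reformulate the problem on the $\GL_2(K)$ side: under local Langlands, the Frobenius semi-simple $\tilde\tau$ with $N=0$ correspond bijectively to irreducible smooth representations $\pi$ of $\GL_2(K)$ which are not unramified twists of Steinberg, and the restriction $\tilde\tau|_{I_K}$ depends only on the inertial (Bernstein) equivalence class of $\pi$. So $\sigma(\tau)$ should be the unique $\GL_2(\cO_K)$-type attached to the corresponding Bernstein component, and the task is to construct it and establish the uniqueness.

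The case analysis then runs as follows. If $\tau=\chi_1\oplus\chi_2$ with $\chi_1\ne\chi_2$, I would take $\sigma(\tau)=\Ind_{\Iw}^{\GL_2(\cO_K)}(\chi_1\boxtimes\chi_2)$, where the Iwahori $\Iw$ acts via its quotient $(\cO_K/\varpi)^\times\times(\cO_K/\varpi)^\times$; the matching with the principal series $\pi(\tilde\chi_1,\tilde\chi_2)$ and the multiplicity-one assertion follow from Mackey's formula applied to the double coset decomposition $\GL_2(K)=\Iw\,\GL_2(\cO_K)\sqcup\Iw\, w\,\GL_2(\cO_K)$. If $\tau=\chi\oplus\chi$ is scalar, I would take $\sigma(\tau)=(\chi\circ\det)|_{\GL_2(\cO_K)}$, and use Frobenius reciprocity to verify that it occurs with multiplicity one exactly in the unramified twists of $\pi(\tilde\chi,\tilde\chi)$ and in the one-dimensional representations extending $\chi\circ\det$. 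If $\tau$ is irreducible then $\pi$ is supercuspidal, and I would take $\sigma(\tau)$ to be the Bushnell--Kutzko maximal type associated to the Bernstein component, restricted to $\GL_2(\cO_K)$: in the tame case this is an irreducible cuspidal representation of $\GL_2(k)$ inflated to $\GL_2(\cO_K)$ via the Deligne--Lusztig construction, while in the wildly ramified case one uses Kutzko's construction on a deeper congruence subgroup.

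The hard part is uniqueness and multiplicity one in the supercuspidal case: distinct Bernstein components must produce non-isomorphic $\GL_2(\cO_K)$-types, and each such type must intertwine the corresponding $\pi$ exactly once. Both assertions can be extracted from the intertwining properties in Bushnell--Kutzko's classification of $\GL_2$-types, combined with Mackey theory for the maximal compact-mod-centre subgroups of $\GL_2(K)$; however, the wildly ramified case is delicate, as the relevant congruence structure depends on the Swan conductor of $\tau$. As an alternative, Henniart's appendix to~\cite{breuil-mezard} defines $\sigma(\tau)$ abstractly using the inertial local Langlands correspondence and verifies its characterisation by a direct comparison of Hecke algebra structures, which gives a uniform argument across all three cases.
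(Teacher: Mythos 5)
The paper does not prove this statement at all: it is quoted as a theorem of Henniart and the reader is referred to the appendix of \cite{breuil-mezard}, and your sketch follows essentially the same explicit, case-by-case construction carried out there (principal-series types by induction from a congruence subgroup, the character $\chi\circ\det$ for scalar $\tau$, and cuspidal types in the irreducible case). Two small slips worth fixing if you were to write this out: inducing $\chi_1\boxtimes\chi_2$ from $\Iw$ through the quotient $(\cO_K/\varpi)^\times\times(\cO_K/\varpi)^\times$ only covers \emph{tamely} ramified $\chi_i$ — in general one must induce from the subgroup of matrices upper triangular modulo $\varpi^n$ with $n$ at least the conductor; and the decomposition you invoke should be the double coset decomposition $B(K)\backslash \GL_2(K)/\Iw$ (two cosets, via Iwasawa plus Bruhat), not the formula as written, since $\Iw\subset\GL_2(\cO_K)$ makes that expression degenerate.
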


\begin{para}\label{sec: BM conj}
For $\lambda \in (\Z^2_+)^{\Hom_{\Qp}(K,E)}$ we set 
$$ W_{\lambda} = \otimes_{\varsigma} \left(\left(\det{}^{\lambda_{\varsigma,2} }\otimes
\Sym^{\lambda_{\varsigma,1} - \lambda_{\varsigma,2}}
\O_K^2\right)\otimes_{\O_K,\varsigma}\O\right)$$ where $\varsigma$ runs over the embeddings
$K\into E$.

Now assume that $E$ is sufficiently large that $\sigma(\tau)$ is defined over $E$. Let $\pi$ be a
uniformiser of $\cO$. Fix a $\GL_2(\cO_K)$-stable $\cO$-lattice
$L_{\lambda,\tau}\subset W_\lambda\otimes_{\O} \sigma(\tau)$. If $A$ is a Noetherian local ring, we let $e(A)$ denote the
Hilbert--Samuel multiplicity of $A$.
\end{para}

\begin{conjecture}\label{BM conj}(Breuil--M\'ezard)
  There exist non-negative integers $\mu_\sigma(\rbar)$ for each
 irreducible mod $p$ representation $\sigma$ of $\GL_2(k),$ such that for any $\tau$,
  $\lambda$ as above, we have

\[e(R_\rbar^{\square,\lambda,\tau}/\pi)=\sum_\sigma n(\sigma)\mu_\sigma(\rbar),\] 
where 
$\sigma$ runs over isomorphism classes of irreducible mod $p$ representation of 
$\GL_2(k),$ and $n(\sigma) = n_{\lambda,\tau}(\sigma)$ 
is the multiplicity of $\sigma$ as a Jordan-H\"older factor of $L_{\lambda,\tau}\otimes_\cO\F,$ so that 
\[(L_{\lambda,\tau}\otimes_\cO\F)^{ss}\isoto\oplus_\swsigma
  \swsigma^{n(\swsigma)}.\]
\end{conjecture}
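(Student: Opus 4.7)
The plan is to attack the identity globally via Taylor--Wiles--Kisin patching on a rank-two unitary group, tying the local Hilbert--Samuel multiplicity of $R_\rbar^{\square,\lambda,\tau}/\pi$ to a multiplicity on a patched Hecke module whose mod-$\pi$ reduction is filtered by the Jordan--H\"older constituents of $L_{\lambda,\tau} \otimes_\cO \F$. The architecture mirrors a ``geometric side'' (the deformation-ring multiplicity) equalling a ``spectral side'' (a sum of Serre-weight contributions), and the bridge between them is a modularity lifting theorem.

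First I would globalize $\rbar$ using potential automorphy (\cite{BLGGT}, together with an appropriate variant ensuring the needed local conditions at $p$) to obtain a modular $\bar r : G_{F^+} \to \GL_2(\Fpbar)$ on a totally real field $F^+$, with $\bar r|_{G_{F^+_{\tilde v}}} \simeq \rbar$ at a distinguished place $\tilde v \mid p$. Next I would carry out Kisin-style patching of spaces of algebraic automorphic forms on a suitable definite unitary group, obtaining a module $M_\infty$ over a power-series ring $R_\infty$ whose appropriate quotient recovers the completed tensor product $\widehat{\otimes}_{v\mid p} R^\square_{\bar r_v}$. Twisting the coefficients by $\sigma(\tau) \otimes W_\lambda$ at each $v\mid p$ produces $M_\infty(L_{\lambda,\tau})$, which local-global compatibility forces to be supported on the potentially crystalline quotient $\widehat{\otimes}_{v\mid p} R^{\square,\lambda,\tau}_{\bar r_v}$.

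The crucial input is a modularity lifting theorem for the type $(\lambda,\tau)$ in question; in the potentially Barsotti--Tate range this is provided by \cite{kis04} and \cite{MR2280776}. Such a theorem identifies $M_\infty(L_{\lambda,\tau})$ as a faithful maximal Cohen--Macaulay module over $\widehat{\otimes}_{v\mid p} R^{\square,\lambda,\tau}_{\bar r_v}$, so the Hilbert--Samuel multiplicity of the latter modulo $\pi$ equals that of $M_\infty(L_{\lambda,\tau})/\pi$. Filtering $L_{\lambda,\tau} \otimes \F$ by its irreducible constituents $\sigma = \otimes_v \sigma_v$, each occurring with multiplicity $\prod_v n_{\lambda_v,\tau_v}(\sigma_v)$, decomposes the right-hand side as a sum of terms $e(M_\infty(\sigma)/\pi)$ weighted by global Jordan--H\"older numbers. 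These $e(M_\infty(\sigma)/\pi)$ are non-negative integers depending only on $\bar r$ and $\sigma$, producing the desired Breuil--M\'ezard identity \emph{after} taking products over $v \mid p$.

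The principal obstacle is twofold. First, outside the potentially Barsotti--Tate range no modularity lifting theorem is presently available in the generality required, so the method proves the full conjecture unconditionally only for $\lambda = 0$; extending it would need new modularity lifting results (or a replacement, such as $p$-adic local Langlands for $\GL_2(K)$ when $K \neq \Qp$, which does not yet exist). Second, potential automorphy cannot arrange $p$ to be inert in $F^+$, so patching genuinely produces an identity for the product $\prod_{v\mid p} e(R^{\square,\lambda,\tau}_{\bar r_v}/\pi)$ rather than for a single local factor. To extract the individual $\mu_\sigma(\rbar)$ I would argue that the collection of identities, as $\tau$ and $\lambda$ vary, constitutes infinitely many equations in finitely many unknowns; a uniqueness argument pinned down by small types (trivial $\tau$ with small $\lambda$), combined with the multiplicativity of the product structure, recovers each local $\mu_\sigma(\rbar)$ as a non-negative integer. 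This linear-algebra step is subtle but essentially formal, leaving the modularity lifting input as the genuine bottleneck.
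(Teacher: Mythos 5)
Your proposal follows essentially the same route as the paper: globalization via potential automorphy (Appendix A), Taylor--Wiles--Kisin patching on a definite unitary group, the modularity lifting theorems of \cite{kis04} and \cite{MR2280776} to verify faithfulness of the patched module in the potentially Barsotti--Tate (more generally, potentially diagonalizable) range, and a uniqueness-plus-tensor-power linear algebra argument (Lemmas \ref{lem: linear algebra lemma} and \ref{lem: pot BT equations are well determined}) to extract the individual local multiplicities from the product over $v\mid p$. Note that the statement as given is a conjecture which the paper, like your proposal, establishes only for $\lambda=0$ and for pairs $(\lambda,\tau)$ all of whose lifts are potentially diagonalizable (Corollary \ref{cor: main result for pot BT} and Theorem \ref{thm: local BM for pot diag}); you correctly identify the missing modularity lifting input as the obstruction to the general case, the only minor discrepancy being that the paper's uniqueness argument fixes $\lambda=0$ and varies over tame types $\tau$ rather than using small $\lambda$ with trivial $\tau$.
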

\begin{rem}
  When $|k|=2$, so that the type $\sigma(\tau)$ is not necessarily unique, it
  follows from Proposition~4.2 of~\cite{breuildiamond} that the quantities
  $n(\sigma)$ are independent of the choice of $\sigma(\tau)$.
\end{rem}

\subsection{Local Serre weights}\label{subsec: local Serre weights}
The following definitions will be useful to us later, in order to give
more explicit information about the $\mu_\sigma(\rbar)$ in some cases.

\begin{para}\label{defnofserreweights}
By a  \emph{(local) Serre weight} we mean an absolutely irreducible representation of $\GL_2(k)$ 
on an $\F$-vector space, up to isomorphism. Let $(\Z^2_+)^{\Hom(k,\F)}_{\SW} \subset (\Z^2_+)^{\Hom(k,\F)}$ be the 
subset consisting of elements $a$ such that 
\[p-1\ge a_{\varsigma,1}-a_{\varsigma,2} \] for each $\varsigma\in\Hom(k,\F).$ 
Then 
$$ \swsigma_a = \otimes_{\varsigma} \det{}^{a_{\varsigma,2} }\otimes \Sym^{a_{\varsigma,1} - a_{\varsigma,2}} k^2\otimes_{k,\varsigma}\F$$ 
where $\varsigma$ runs over the embeddings $k \hookrightarrow \F,$ is a Serre weight, and every Serre weight is of this form. 
We say $a, a' \in(\Z^2_+)^{\Hom(k,\F)}_{\SW}$ are \emph{equivalent} if $\swsigma_a \cong \swsigma_{a'}.$ 
If $\swsigma_a \cong \swsigma_{a'}$ then  $a_{\varsigma,1} - a_{\varsigma,2} = a'_{\varsigma,1} - a'_{\varsigma,2}$ 
for all $\varsigma.$

\end{para}

\begin{para}\label{weightremarks}
 We have a natural surjection $\Hom_{\Qp}(K,E)\onto\Hom(k,\F)$, which is a
bijection if and only if $K/\Qp$ is unramified. Suppose that $K/\Qp$
has ramification degree $e$. For each $\varsigma\in\Hom(k,\F)$, we choose
an element $\tau_{\varsigma,1}$ in the preimage of $\varsigma$, and denote
the remaining elements of the preimage by
$\tau_{\varsigma,2},\dots,\tau_{\varsigma,e}$. Now, given
$a\in(\Z^2_+)^{\Hom(k,\F)}$, we define
$\lambda_a\in(\Z^2_+)^{\Hom_{\Qp}(K,E)}$ as follows: For $i=1,2,$
$\lambda_{a_{\tau_{\varsigma,1},i}}=a_{\varsigma,i}$, and
$\lambda_{a_{\tau_{\varsigma,j},i}}=0$ if $j>1$. When $K/\Qp$ is
unramified, we will sometimes write $a$ for $\lambda_a$.  

By definition, we have $W_{\lambda_a}\otimes_{\cO}\F\cong \swsigma_a$, and we
write $R^{\square,a}$ for $R^{\square,\lambda_a}$. Note that in the
case that $K/\Qp$ is ramified this definition depends on the choice of
the places $\tau_{\varsigma,1}$, as does Definition \ref{defn: lift of
  Hodge type a} below (at least \emph{a priori}). However, our only use of
this definition in the ramified case will be to prove that in certain
cases $R^{\square,\lambda_a}$ is nonzero (cf.\ Lemma \ref{lem:
  multiplicity positive implies Serre weight and thus crystalline
  lift} below), and in these cases our argument will in fact show that
this holds for any choice of the $\tau_{\varsigma,1}$. 

If $a,a' \in(\Z^2_+)^{\Hom(k,\F)}_{\SW}$ with $\swsigma_a\cong
\swsigma_{a'},$ then there is a crystalline character $\psi_{a,a'}$ of $G_K$ with trivial reduction
mod $p$ and Hodge--Tate weights given by
$\HT_{\tau_{\varsigma,1}}(\psi_{a,a'})=a_{\varsigma,2}-a'_{\varsigma,2}$, and
$\HT_{\tau_{\varsigma,j}}(\psi_{a,a'})=0$ if $j>1$. The corresponding
universal deformation to $R^{\square,a}$ is obtained from that to
$R^{\square,a'}$ by twisting by $\psi_{a,a'},$ which induces an isomorphism $R^{\square,a} \cong R^{\square,a'}.$

If $\sigma$ is an irreducible
$\F$-representation of $\GL_2(k)$ and $\sigma\cong \swsigma_a$, we will write
$R^{\square,\sigma}$ for $R^{\square,a}$. The following definitions
will be needed in order to state our main results.
\end{para}
\begin{defn}
  \label{defn: lift of Hodge type a}We say that $\rbar$ has a
  \emph{lift of Hodge type} $\sigma$ if (with notation as above)
  $R^{\square,\sigma}\ne 0$.
\end{defn}
\begin{defn}
  \label{defn:regular weight local}Suppose that $K/\Qp$ is
  unramified. We say that a Serre weight $\sigma$ is
  \emph{regular} if $\sigma \cong \sigma_a$ for some $a \in (\Z^2_+)^{\Hom(k,\F)},$ 
with  $p-2\ge a_{\varsigma,1}-a_{\varsigma,2}$ for each $\varsigma\in\Hom(k,\F).$
We say that it is \emph{Fontaine--Laffaille regular} if for each
  $\varsigma\in\Hom(k,\F)$ we have $p-3\ge a_{\varsigma,1}-a_{\varsigma,2}.$
The remarks above show that these conditions depend only on $\sigma$ 
and not on the choice of $a.$

\end{defn}
\begin{para}\label{predicted} 
We remark that this is a significantly less restrictive definition
than the definition of regular weights in \cite{geebdj}. 

In order to make use of the results of \cite{GLS11}, \cite{GLS12}, \cite{GLS13} we
need to recall the notion of a predicted Serre weight. Beginning with
the seminal work of \cite{bdj}, various definitions have been
formulated of conjectural sets of Serre weights for two-dimensional
global mod $p$ representations (cf.\ \cite{MR2430440},
\cite{gee061}). These sets are defined purely locally, and the
relationship between the different local definitions is important in
proving the weight part of Serre's conjecture; see Section 4 of
\cite{blggU2} for a thorough discussion. In particular, given a
continuous representation $\rbar:G_K\to\GL_2(\F)$, sets of weights
$\Wconj(\rbar)$ and $\Wcris(\rbar)$ are defined in \emph{loc. cit.}
The set $\Wconj(\rbar)$ is defined by an explicit recipe that
generalises those of \cite{bdj}, \cite{MR2430440} and \cite{gee061},
whereas the set $\Wcris(\rbar)$ is the set of weights $\sigma$ for which
$\rbar$ has a crystalline lift of Hodge type $\sigma$. It is shown in
\cite{blggU2} that $\Wexplicit(\rbar)\subset\Wcris(\rbar)$, and
conjectured that equality holds; this has now been proved \cite{GLS13}. We make the following
definition.
\end{para}
\begin{defn}
  \label{defn:predicted Serre weight}If $\rbar:G_K\to\GL_2(\F)$ is a
  continuous representation, we say that $\sigma$ is a \emph{predicted
    Serre weight} for $\rbar$ if $\sigma \in\Wconj(\rbar)$,
  where $\Wconj(\rbar)$ is the set of Serre weights defined in Section
  4 of \cite{blggU2}.
\end{defn}

\section{Algebraic automorphic forms and Galois representations}
\label{sec: Unitary groups}
\subsection{Unitary groups and algebraic automorphic forms}
Let $p>2$ be a prime, and let $F$ be an imaginary CM field with
maximal totally real field subfield $F^+$. We assume throughout this
paper that:
\begin{itemize}
\item $F/F^+$ is unramified at all finite places.
\item Every place $v|p$ of $F^+$ splits in $F$.
\end{itemize}
By class field theory, the set of places $v$ of $F^+$ such that $-1$ is not in the image of 
the local norm map $(F\otimes_{F^+}F^+_v)^\times \rightarrow
F^{+,\times}_v$ has even cardinality. Since we are assuming that 
$F/F^+$ is unramified at all finite places, this implies that $[F^+:\Q]$ is even.

\begin{para}\label{para: models} We now define the unitary groups with which we shall work. It will be convenient 
to define these as groups over $\O_{F^+}.$

Let $c \in \Gal(F/F^+)$ be the complex conjugation. The map $g \mapsto ({}^tg^c)^{-1}$ is an involution of 
$\GL_{2/\O_F}$ which covers the action of $c$ on $\O_F.$ Since $\O_F$ is unramified over $\O_{F^+},$ 
it follows by \'etale descent that there is a reductive group $G$ over $\O_{F^+}$ 
such that for any $\cO_{F^+}$-algebra $R,$ one has 
\[G(R)=\{g\in \GL_2(\O_F\otimes_{\cO_{F^+}} R) :{}^tg^c g=1\}.\] Thus $G$ is a
unitary group which is definite at infinite places, and which is quasi-split at
finite places, because $-1$ is in the image of the local norm map at finite
places. ($G$ is automatically quasi-split if $n$ is odd, and if $n$ is even,
this is equivalent to the standard Hermitian form being a sum of hyperbolic
planes, by (for example) the discussion of groups of type ${}^2A_n$ on p.\ 55
of~\cite{MR0224710}. If $N(a)=-1$, then the equation $z_1z_1^c + z_2z_2^c = 0$
has solution vectors of the form $(z,az)$ and $(z, a^cz)$, so the condition is
satisfied for $n$ even.)

By construction, $G$ is equipped with an isomorphism 
 $\iota:G_{/\O_F} \isoto \GL_{2/\O_F}$ such that  $\iota\circ c \circ \iota^{-1}(g) = ({}^tg^c)^{-1}.$
If $v$ is a place of $F^+$ which splits as $ww^c$ over $F$, then $\iota$ induces 
isomorphisms
\[\iota_w:G(\cO_{F_v^+})\isoto \GL_2(\cO_{F_w}),\ \iota_{w^c}:G(\cO_{F_v^+})\isoto \GL_2(\cO_{F_{w^c}}) \]
which satisfy
$\iota_w\circ\iota^{-1}_{w^c}(g)=({}^tg^c)^{-1}$.  These extend to
isomorphisms $\iota_w:G(F^+_v)\isoto\GL_2(F_w)$ and $\iota_{w^c}:G(F^+_v)\isoto\GL_2(F_{w^c})$ with the same
properties.

\end{para}

\begin{para}\label{sec: coefficients}
  Continue to let $E/\Qp$ be a sufficiently large extension with ring
  of integers $\cO$ and residue field $\F$. Let $S_p$ denote the set
  of places of $F^+$ lying over $p$, and for each $v\in S_p$ fix a
  place $\tv$ of $F$ lying over $v$. Let $\tilde{S}_p$ denote the set
  of places $\tilde{v}$ for $v\in S_p$. Write $F^+_p=F^+\otimes_\Q\Qp$,
  and $\cO_{F^+_p}$ for the $p$-adic completion of $\cO_{F^+}$.

Let $W$ be an $\cO$-module with an action of $G(\cO_{F^+_p})$, and let
$U\subset G(\A_{F^+}^\infty)$ be a compact open subgroup with the
property that for each $u\in U$, if $u_p$ denotes the projection of
$u$ to $G(F_p^+)$, then $u_p\in G(\cO_{F^+_p})$.

Let $S(U,W)$ denote
the space of algebraic modular forms on $G$ of level $U$ and weight
$W$, i.e. the space of functions \[f:G(F^+)\backslash
G(\A_{F^+}^\infty)\to W\] with $f(gu)=u_p^{-1}f(g)$ for all $u\in U$.

For any compact open subgroup $U$  of $G(\A_{F^+}^\infty)$ as above, we may write
$G(\A_{F^+}^\infty)=\coprod_i G(F^+)t_i U$ for some finite set
$\{t_i\}$. Then there is an isomorphism \[S(U,W)\isoto\oplus_i W^{U\cap
  t_i^{-1}G(F^+)t_i}\]given by $f\mapsto (f(t_i))_i$. 
We say that
$U$ is \emph{sufficiently small} if for some finite place $v$ of $F^+$
the projection of $U$ to $G(F^+_v)$ contains no element of finite
order other than the identity. Suppose that $U$ is sufficiently
small. Then for each $i$ as above we have
$U\cap t_i^{-1}G(F^+)t_i=\{1\}$, so we see
that for any $\cO$-algebra $A$ and $\cO$-module $W$, we
have \[S(U,W\otimes_\cO A)\cong S(U,W)\otimes_\cO A.\]
\end{para}

\begin{para}\label{para: weights}
  Let $\tI_p$ denote the set of embeddings $F\into E$ giving rise to a
  place in $\tS_p$. For any $v \in S_p$, let $\tI_\tv$ denote the set
  of elements of $\tI_p$ lying over $\tv$. Note that $|\tI_\tv|=[F^+_v:\Qp]=[F_\tv:\Qp]$. Let $\Z^2_+$ be as in
  Section \ref{subsec: deformation rings and types}.  For any
  $\lambda\in(\Z^2_+)^{\tI_\tv}$, let $W_\lambda$ be the $E$-vector
  space with an action of $\GL_2(\cO_{F_\tv})$ given
by 
\[W_\lambda:=\otimes_{\varsigma\in\tI_\tv}\det{}^{\lambda_{\varsigma,2}}\otimes
\Sym^{\lambda_{\varsigma,1}-\lambda_{\varsigma,2}}F_\tv^2\otimes_{F_\tv,\varsigma}E.\]
We give this an action of $G(\cO_{F^+_v})$ via $\iota_\tv$. 

For any $\lambda\in(\Z^2_+)^{\tI_p}$ and $v \in S_p$, let $\lambda_v
\in (\Z^2_+)^{\tI_\tv}$ denote the tuple of pairs in $\lambda$ indexed
by $\tI_{\tv},$ and let $W_\lambda$ be the $E$-vector space with an
action of $G(\cO_{F^+_p})$ given by \[W_\lambda:=\otimes_{v \in S_p}
W_{\lambda_v}.\]

For each $v \in S_p$, let $\tau_v$ be an inertial type for $G_{F_v^+}$, so
that (since $E$ is assumed sufficiently large) there is an absolutely
irreducible $E$-representation $\sigma(\tau_v)$ of
$\GL_2(\cO_{F_{\tilde v}})$ 
associated to $\tau_v$ by Theorem \ref{thm: Henniart existence of
  types}.  Write $\sigma(\tau)$ for the tensor product of the
$\sigma(\tau_v)$, regarded as a representation of $G(\cO_{F^+_p})$ by
letting $G(\cO_{F^+_p})$ act on $\sigma(\tau_v)$ via $\iota_\tv$.  Fix a
$G(\cO_{F^+_p})$-stable $\O$-lattice $L_{\lambda,\tau} \subset
W_{\lambda}\otimes_\cO\sigma(\tau),$ and for any $\cO$-module $A$,
write
$$ S_{\lambda,\tau}(U,A):=S(U,L_{\lambda,\tau}\otimes_\cO A).$$
\end{para}

\subsection{Hecke algebras and Galois representations}\label{sec: Galois repns}

\begin{defn}\label{defn: good subgroup} We say that a compact open subgroup
of $G(\A_{F^+}^\infty)$ is \emph{good} if $U=\prod_vU_v$ with $U_v$ a
compact open subgroup of $G(F^+_v)$ such that:
\begin{itemize}
\item $U_v\subset G(\bigO_{F^+_v})$ for all $v$ which split in $F$;
   \item $U_v=G(\bigO_{F^+_v})$ if $v|p$ or $v$ is inert in $F.$
\end{itemize}
\end{defn}

\begin{para} 
Let $U$ be a good compact open subgroup of $G(\A_{F^+}^\infty)$. Let $T$ be a finite set of finite places of $F^+$ which split in $F$,
containing $S_p$ and all the places $v$ which split in $F$ for which
$U_v\neq G(\bigO_{F^+_v})$. We let $\mathbb{T}^{T,\univ}$ be the
commutative $\bigO$-polynomial algebra generated by formal variables
$T_w^{(j)}$ for $j = 1,2$, and $w$ a place of $F$ lying over a
place $v$ of $F^+$ which splits in $F$ and is not contained in $T$.
For any $\lambda\in (\Z^2_+)^{\tI_p}$, the algebra
$\mathbb{T}^{T,\univ}$ acts on $S_{\lambda,\tau}(U,\cO)$ via the
 Hecke operators
  \[ T_{w}^{(j)}:=  \iota_{w}^{-1} \left[ GL_2(\mc{O}_{F_w}) \left( \begin{matrix}
      \varpi_{w}1_j & 0 \cr 0 & 1_{2-j} \end{matrix} \right)
GL_2(\mc{O}_{F_w}) \right] 
\] for $w\not \in T$ and $\varpi_w$ a uniformiser in
$\mc{O}_{F_w}$.

We denote by $\mathbb{T}^T_{\lambda,\tau}(U,\cO)$ the image of
$T^{T,\univ}$ in $\End_\cO(S_{\lambda,\tau}(U,\cO))$.
\end{para}

\begin{para}\label{para: defn G2}

Let $\mf{m}$
be a maximal ideal of $\mathbb{T}^{T,\univ}$ with residue field $\F.$ 
We say that $\mf{m}$ is 
\emph{automorphic} if $S_{\lambda,\tau}(U,\cO)_\m\ne 0$ for some
$(\lambda,\tau)$ as above. If $\rbar:G_F\to\GL_2(\F)$ is an absolutely
irreducible continuous representation, we say that $\rbar$ is
\emph{automorphic} if there are $U$, $T$ as above and an automorphic maximal ideal $\m$ of
$\mathbb{T}^{T,\univ}$ such that for all places $v\notin T$ of $F^+$
which split as $v=ww^c$ in $F$, $\rbar|_{G_{F_w}}$ is unramified, and $\rbar(\Frob_w)$ has characteristic
polynomial equal to the image of
$X^2-T^{(1)}_wX+(\mathbf{N}w)T^{(2)}_w$ in $\F[X]$. Note that if
$\rbar$ is automorphic, it is necessarily the case that
$\rbar^c\cong\rbar^\vee\varepsilonbar^{-1}$, because we have
$T_{w^c}^{(j)}=(T_w^{(2)})^{-1}T_w^{(2-j)}$ in $\mathbb{T}^T_{\lambda,\tau}(U,\cO)$.

Let $\cG_2$ be the group scheme over $\Z$ defined to be the semidirect
product of $\GL_2\times\GL_1$ by the group $\{1,j\}$, which acts on
$\GL_2\times\GL_1$ by \[j(g,\mu)j^{-1}=(\mu{}^tg^{-1},\mu).\]We have a
homomorphism $\nu:\cG_2 \to \GL_1,$ sending $(g,\mu)$ to $\mu$ and $j$ to $-1$.

Assume that $\rbar$ is absolutely irreducible and automorphic with
corresponding maximal ideal $\m$. By Lemma 2.1.4 of \cite{cht} and the
main result of \cite{belchen} we can and do extend $\rbar$ to a
representation $\rhobar:G_{F^+}\to\cG_2(\F)$ with
$\nu\circ\rhobar=\varepsilonbar^{-1}$ and
$\rhobar|_{G_F}=(\rbar,\varepsilonbar^{-1})$. 
By Lemma 2.1.4 of
\cite{cht}, the $\F^\times$-conjugacy classes of such extensions are
a torsor under $\F^\times/(\F^\times)^2.$ In particular, any two extensions $\rhobar$ 
become conjugate if we replace $\F$ by a quadratic extension. 
We fix a choice of $\rhobar$ from now on.

In the rest of the paper we will make a number of arguments that will
be vacuous unless $S_{\lambda,\tau}(U,\cO)_\m\ne 0$ for the specific
$(\lambda,\tau)$ at hand, but for technical reasons we do not assume
this. Let $G_{F^+,T}:=\Gal(F(T)/F^+)$,
  $G_{F,T}:=\Gal(F(T)/F)$, where $F(T)$ is the maximal extension of
  $F$ unramified outside of places lying over $T$.
\end{para}

\begin{thm}\label{existence of repns}For any $(\lambda,\tau)$ there is a unique continuous
lift \[\rho_\mf{m}:G_{F^+,T}\to\cG_2(\mathbb{T}^T_{\lambda,\tau}(U,\cO)_\mf{m})\]of
$\rhobar$, which satisfies
\begin{enumerate}
\item $\rho_{\mf{m}}^{-1}((\GL_2\times\GL_1)(\mathbb{T}^T_{\lambda,\tau}(U,\cO)_\mf{m}))=G_{F,T}$.
\item $\nu\circ\rho_\mf{m}=\varepsilon^{-1}$.
\item\label{item:char poly of universal Hecke deformation} $\rho_\mf{m}$ is unramified outside $T$. If $v\notin T$ splits
  as $ww^c$ in $F$ then $\rho_\mf{m}(\Frob_w)$ has characteristic
  polynomial \[X^2-T_w^{(1)}X+(\mathbf{N}w)T_w^{(2)}.\]
\item \label{item:local at l=p behaviour of universal Hecke
    deformation} For each place $v\in S_p$, and each homomorphism $x:\mathbb{T}^T_{\lambda,\tau}(U,\cO)_\mf{m}\to\Qpbar$,
  $x\circ\rho_\mf{m}|_{G_{F_\tv}}$ is potentially crystalline of Hodge
  type $\lambda_v$ and Galois type $\tau_v$.

\end{enumerate}
\end{thm}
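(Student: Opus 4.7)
The plan is to construct $\rho_\mf{m}$ by interpolating the Galois representations attached to the classical automorphic points of $\Spec \mathbb{T}^T_{\lambda,\tau}(U,\cO)_\mf{m}$, and then verify the properties pointwise. We may assume $S_{\lambda,\tau}(U,\cO)_\mf{m}\ne 0$, else the statement is vacuous. For each minimal prime of $\mathbb{T}^T_{\lambda,\tau}(U,\cO)_\mf{m}$, respectively each $\cO$-algebra homomorphism $x:\mathbb{T}^T_{\lambda,\tau}(U,\cO)_\mf{m}\to\Qpbar$, the space $S_{\lambda,\tau}(U,\cO)_\mf{m}\otimes_\cO\Qpbar$ contains a vector on which $\mathbb{T}^{T,\univ}$ acts through $x$. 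After descending the weight using the standard relation between algebraic modular forms on $G$ and automorphic representations (e.g. as in Section~3 of \cite{cht}), this corresponds to a cuspidal automorphic representation $\pi_x$ of $G(\A_{F^+})$, the finite part of whose $v$-component (for $v\in S_p$) contains $\sigma(\tau_v)$ and which has infinity type determined by $\lambda$. Via the base change result of Labesse, $\pi_x$ gives rise to a regular algebraic essentially conjugate self-dual cuspidal automorphic representation of $\GL_2(\A_F)$.

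The second step is to attach a Galois representation $r_x:G_F\to\GL_2(\Qpbar)$ to this $\GL_2$-automorphic representation using the well-known results in this case (as collected for example in Theorem 2.1.1 of \cite{cht} and the references therein). The resulting $r_x$ is unramified outside $T$, and at each $w\notin T$ split over $F^+$ the characteristic polynomial of $r_x(\Frob_w)$ is $X^2 - x(T_w^{(1)})X+(\mathbf{N}w)x(T_w^{(2)})$. By the local-global compatibility theorems at places above $p$ (compatibility of Hodge--Tate weights due to Caraiani, and the refinement to potentially crystalline representations of the prescribed inertial type following from Theorem~\ref{thm: Henniart existence of types} together with the local-global compatibility for types, cf.\ the arguments in Section~1 of \cite{kisinfmc}), we obtain that $r_x|_{G_{F_{\tv}}}$ is potentially crystalline of Hodge type $\lambda_v$ and Galois type $\tau_v$ for each $v\in S_p$. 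Using Lemma 2.1.4 of \cite{cht}, we extend $r_x$ to $\rho_x:G_{F^+}\to\cG_2(\Qpbar)$ with $\nu\circ\rho_x=\varepsilon^{-1}$, and after a possible $\Qpbar^\times$-twist we can arrange that $\rho_x$ reduces to $\rhobar$.

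The third step is to patch the $\rho_x$ into a single representation valued in the Hecke algebra. Passing to the attached two-dimensional pseudorepresentations of $G_{F,T}$ (or the Chenevier-style $\cG_2$-valued determinant on $G_{F^+,T}$) interpolates canonically to a pseudorepresentation valued in $\mathbb{T}^T_{\lambda,\tau}(U,\cO)_\mf{m}$, because its traces are given by explicit polynomials in the Hecke operators via~\ref{item:char poly of universal Hecke deformation}. Since $\rhobar$ is absolutely irreducible, this pseudorepresentation lifts uniquely to a genuine representation $\rho_\mf{m}:G_{F^+,T}\to\cG_2(\mathbb{T}^T_{\lambda,\tau}(U,\cO)_\mf{m})$ lifting $\rhobar$ (using the pseudodeformation theory of Chenevier). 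Uniqueness, (1), (2), and (3) are then immediate from the construction and Chebotarev density.

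The main obstacle is verifying (4), the local-at-$p$ condition. By Proposition~\ref{defrings}, the condition that a lift of $\rbar|_{G_{F_\tv}}$ is potentially crystalline of the prescribed Hodge and Galois type cuts out the closed subscheme $\Spec R_{\rbar|_{G_{F_\tv}}}^{\square,\lambda_v,\tau_v}$ of $\Spec R_{\rbar|_{G_{F_\tv}}}^{\square}$. By step two, the restriction of $\rho_\mf{m}$ to $G_{F_\tv}$ factors through the corresponding quotient modulo every minimal prime of $\mathbb{T}^T_{\lambda,\tau}(U,\cO)_\mf{m}$; since this Hecke algebra is $\cO$-torsion free (because $S_{\lambda,\tau}(U,\cO)$ is $\cO$-torsion free), it embeds into a product over its minimal primes, and (4) follows. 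The delicate point throughout is the refined local-global compatibility at $p$ for the inertial type, which in this two-dimensional potentially Barsotti--Tate (or more generally potentially crystalline) setting follows from the characterisation of $\sigma(\tau_v)$ in Theorem~\ref{thm: Henniart existence of types}.
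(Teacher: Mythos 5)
Your proposal is correct and follows essentially the same route as the paper's proof, which simply invokes the argument of Proposition 3.4.4 of \cite{cht} together with Labesse's base change to $\GL_2(\A_F)$, the local-global compatibility theorem at $p$ (Theorem 1.1 of \cite{BLGGT11}), and Theorem \ref{thm: Henniart existence of types} to verify property (4). Your write-up just spells out explicitly the steps that the paper leaves to the reference: attaching automorphic representations to classical points, transferring to $\GL_2$, applying local-global compatibility and Henniart's characterisation of $\sigma(\tau_v)$, and gluing via pseudorepresentations using the absolute irreducibility of $\rbar$.
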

\begin{proof} This may be proved in the same way as Proposition 3.4.4
  of \cite{cht}, making use of Corollaire 5.3 of \cite{labesse},
  Theorem 1.1 of \cite{BLGGT11}, and Theorem \ref{thm: Henniart
    existence of types} above. (More specifically, Corollaire 5.3 of
  \cite{labesse} is used in order to transfer our automorphic forms to
  $\GL_n$, in place of the arguments made in the proof of Proposition
  3.3.2 of~\cite{cht}. The argument of Proposition 3.4.4
  of \cite{cht} then goes through unchanged, except that we have to
  check property (4) above; but by Theorem \ref{thm: Henniart
    existence of types}, this is a consequence of local-global 
compatibility at places dividing $p$, which is a special case of Theorem 1.1 of \cite{BLGGT11}.)

\end{proof}

\subsection{Global Serre weights}\label{subsec: globalserreweights}
 A \emph{global Serre weight} (for $G$) is an absolutely irreducible mod $p$ representation 
of $G(\cO_{F^+_p})$ considered up to equivalence.
For $v \in S_p$ denote by $k_v$ the residue field of $v.$
Let $a = (a_v)_{v \in S_p},$ where $a_v \in (\Z^2_+)^{\Hom(k_v,\F)}_{\SW}.$ 
We set 
\[\swsigma_a=\otimes_\F \swsigma_{a_v},\] where for $v \in S_p,$ 
$\swsigma_{a_v}$ is the representation of
$\GL_2(k_v) = \GL_2(k_\tv)$ defined in section~\ref{subsec: deformation rings and
  types}. We let $G(\cO_{F^+_p})$ act on $\swsigma_{a_v}$ by the composite
of $\iota_\tv,$ and reduction modulo $p.$ This makes $\swsigma_a$ an
irreducible $\F$-representation of $G(\cO_{F^+_p}),$ and any 
irreducible $\F$-representation of $G(\cO_{F^+_p})$ is equivalent to 
$\sigma_a$ for some $a.$

We say that two such tuples $a = (a_v)_{v \in S_p}, a' = (a'_v)_{v \in S_p},$ are \emph{equivalent} if 
$\swsigma_a\cong \swsigma_{a'}$; this implies that
$a_{v,\varsigma,1}-a_{v,\varsigma,2}=a'_{v,\varsigma,1}-a'_{v,\varsigma,2}$
for each $v \in S_p,$ and $\varsigma\in \Hom(k_v,\F).$

For $v \in S_p,$ $\tI_{\tv}$ naturally surjects onto $\Hom(k_v, \F).$ 
Fixing once and for all a splitting of each of these surjections, we obtain, 
as in section~\ref{subsec: deformation rings and types},
a Hodge type $\lambda_{a_v} \in  (\Z^2_+)^{\tI_{\tv}},$ and hence an element 
$\lambda_a \in   (\Z^2_+)^{\tI_p}.$

\section{The patching argument}\label{sec: patching}

\subsection{Hecke algebras}\label{subsec: Hecke algebras} 
In this section we employ the Taylor--Wiles--Kisin patching method,
following the approaches of \cite{kisinfmc} and \cite{blgg} (which in
turn follows \cite{cht}). In particular, in the actual implementation
of the patching method we follow \cite{blgg} very closely.

\begin{para}\label{subsubsec: assumptions on rbar}
Continue to assume
that $F$ is an imaginary CM field with maximal totally real field
subfield $F^+$ such that:
\begin{itemize}
\item $F/F^+$ is unramified at all finite places.
\item Every place $v|p$ of $F^+$ splits in $F$.
\item $[F^+:\Q]$ is even.
\end{itemize}
Let $G_{/\cO_{F^+}}$ be the algebraic group defined in section
\ref{sec: Unitary groups}.

Fix an absolutely irreducible representation $\rbar:G_F\to\GL_2(\F)$. Assume that
\begin{itemize}
\item $\rbar$ is automorphic in the sense of Section \ref{sec: Galois repns} (so
  that in particular $\rbar^c\cong\rbar^\vee\varepsilonbar^{-1}$).
\item $\rbar$ is unramified at all primes $v\nmid p.$
\item $\zeta_p\notin F$.
\item $\rbar|_{G_{F(\zeta_p)}}$ is absolutely irreducible.
\item The projective image of $\rbar$ is not isomorphic to $A_4$.
\end{itemize}

Let $U$ be a good compact open subgroup of $G(\A_{F^+}^\infty)$ (see
Definition \ref{defn: good subgroup}) such that if $U_v \subset G(\cO_{F_v})$ is not maximal
for some $v\nmid p,$ then
\begin{itemize}
\item $U_v$ is the preimage of the upper triangular unipotent matrices under 
\[G(\cO_{F^+_{v}})\to G(k_{v}) \underset{\iota_{w}}\iso \GL_2(k_{v})\]
where $w$ is a place of $F$ over $v.$
\item $v$ does not split completely in $F(\zeta_p);$ that is $(\mathbf{N}v) \neq 1$ mod $p.$
\item The ratio of the eigenvalues of $\rhobar(\Frob_{v})$
is not equal to $(\mathbf{N}v)^{\pm 1}$.
\end{itemize}

Finally, we assume that $U_{v_1}$ is not maximal for some place $v_1\nmid
p$ of $F^+$ such that
\begin{itemize}
\item for any non-trivial root of unity $\zeta$ in a
quadratic extension of $F$, $v_1$ does not divide $\zeta+\zeta^{-1}-2$.
\end{itemize}
Note that under these assumptions, $U$ is sufficiently small. Note
also that by Lemma 4.11 of \cite{MR1605752}, it is always possible to
choose a place $v_1$ which satisfies these hypotheses. 
\end{para}

\begin{para}
  Continue to let $E$ be a sufficiently large finite extension of
  $\Qp$ with ring of integers $\cO$ and residue field $\F$, and assume
  in particular that $E$ is large enough that $\rbar$ is defined over
  $\F$. As in section \ref{sec: Unitary groups}, we have a fixed set
  of places $\tS_p$ of $F$ dividing $p$, and we let $\tI_p$ denote the
  set of embeddings $F\into E$ giving rise to an element of $\tS_p$.
  Let $R$ denote the set of places $v$ of $F^+$ for which $U_v\ne
  G(\cO_{F^+_v})$, write $T=S_p\coprod R$, and define the Hecke
  $\cO$-algebra $\mathbb{T}^{T,\univ}$ as
  above. 

Fix a weight $\lambda\in(\Z^2_+)^{\tI_p}$, and for each
place $v\in S_p$, fix an inertial type $\tau_v$ of $I_{F_\tv}$.

We note that our assumptions on $\rbar$ and on $U_v$ at the places $v$ at which $U_v$ is not maximal 
imply that $S_{\lambda,\tau}(U,\cO)_\m\otimes_{\Z_p}\Q_p$ is either 0 or is locally free over 
$\mathbb{T}^T_{\lambda,\tau}(U,\cO)_\m[1/p]$ of rank $2^{|R|}$
(cf.\ Lemma 1.6(2) of \cite{tay-fm2}; the requisite multiplicity one result is
given by Theorems 5.4 and 5.9 of~\cite{labesse}).
\end{para}

\subsection{Deformations to $\cG_2$}
Let $\cG_2$ be as in section \ref{para: defn G2} and extend $\rbar$  
to a representation $\rhobar:G_{F^+}\to\cG_2(\F)$
with $\nu\circ\rhobar=\varepsilonbar^{-1}$ 
and $\rhobar|_{G_F}=(\rbar,\varepsilonbar^{-1})$ as in section \ref{para: defn G2}. 

\begin{para}
Let $\cC_\cO$ denote the category of complete local Noetherian
$\cO$-algebras with residue field isomorphic to $\F$ via the structure
map. Let $S$ be a set of places of $F^+$ which split in $F$,
containing all places dividing $p$. 
Regard $\rhobar$
as a representation of $G_{F^+,S}$. As in Definition 1.2.1 of \cite{cht}, we define
\begin{itemize}
\item a \emph{lifting} of $\rhobar$ to an object $A$ of
  $\mc{C}_{\mc{O}}$ to be a continuous homomorphism $\rho : G_{F^+,S}
  \rightarrow \mc{G}_2(A)$ lifting $\rhobar$ and with $\nu \circ \rho =
  \varepsilon^{-1}$;
\item two liftings $\rho$, $\rho^{\prime}$ of $\rhobar$ to $A$ to be
  \emph{equivalent} if they are conjugate by an element of
  $\ker(\GL_2(A)\rightarrow \GL_2(\F))$;
\item a \emph{deformation} of $\rhobar$ to an object $A$ of
  $\mc{C}_{\mc{O}}$ to be an equivalence class of liftings.
\end{itemize}
Similarly, if $T\subset S$, we define
\begin{itemize}
\item a \emph{$T$-framed lifting} of $\rhobar$ to $A$ to be a tuple
 $(\rho,\{ \alpha_v\}_{v \in T})$ where $\rho$ is a lifting of $\rhobar$ and
 $\alpha_v \in \ker(\GL_2(A)\rightarrow \GL_2(\F))$ for $v \in T$;
\item two $T$-framed liftings  $(\rho,\{ \alpha_v\}_{v \in T})$,
 $(\rho^{\prime},\{ \alpha^{\prime}_v\}_{v \in T})$ to be
 \emph{equivalent} if there is an element $\beta \in
 \ker(\GL_2(A)\rightarrow \GL_2(\F))$ with $\rho^{\prime}=\beta \rho
 \beta^{-1}$ and $\alpha_v^{\prime}=\beta \alpha_v$ for $v \in T$;
\item a \emph{$T$-framed deformation} of $\rhobar$ to be an equivalence
 class of $T$-framed liftings.
\end{itemize}
\end{para}

\begin{para}
For each place $v\in T$ we choose a place $\tv$ of $F$ above $v$,
extending the choices made for $v\in S_p$. Let $\tT$ denote the set of
places $\tv$, $v\in T$. For each $v\in T$, we let $R_\tv^\square$
denote the universal $\cO$-lifting ring of $\rbar|_{G_{F_\tv}}$. For
each $v\in S_p$, write $R_\tv^{\square,\lambda_v,\tau_v}$ for
$R_{\rbar|_{G_{F_\tv}}}^{\square,\lambda_v,\tau_v}$. 

We now recall from sections 2.2 and 2.3 of \cite{cht} the notion of a
\emph{deformation
  problem} \[\cS':=(L/L^+,T',\tT',\cO,\rbar,\chi,\{R_{\cS',\tv}\}_{v\in T'}).\]
  This data consists of
  \begin{itemize}
  \item an imaginary CM field $L$ with maximal totally
  real subfield $L^+$.
\item  a finite set of finite places $T'$ of $L^+$,
  each of which splits in $L$.
\item a finite set of finite places $\tT'$ of $L$, consisting of exactly one
  place lying over each place in $T'$.
\item the ring of integers $\cO$ of a finite extension $E$ of $\Qp$
  (assumed sufficiently large).
\item $\rbar:G_{L^+,T'}\to\cG_2(\F)$  a continuous homomorphism such
  that $\rbar^{-1}(\GL_2(\F)\times\GL_1(\F))=G_{L,T'}$, and
  $\rbar|_{G_{L,T'}}$ is irreducible.
\item $\chi:G_{L^+,T'}\to\cO^\times$  a continuous character lifting $\nu\circ\rbar$.
\item for each place $v\in T'$,  a quotient $R_{\cS',\tv}$ of
  $R_\tv^\square$ by an $1+M_2(\m_{R_\tv^\square})$-invariant ideal.
  \end{itemize}
For any deformation problem $\cS'$ as above, there is a universal
deformation $\cO$-algebra $R_{\cS'}^\univ$ and a universal deformation
$r_{\cS'}^{\univ}:G_{L^+,T'}\to\cG_2(R_{\cS'}^{\univ})$ of
$\rbar$, which is universal for deformations $r$ of $\rbar$ with
$\nu\circ r=\chi$ which satisfy the additional property that for each
$v\in T'$, the point of $\Spec R_\tv^\square$ corresponding to
$r|_{G_{F_{\tv}}}$ is a point of $\Spec R_{\cS',\tv}$. 
The $1+M_2(\m_{R_\tv^\square})$-invariance of $R_{\cS',\tv}$ implies that 
this condition does not depend on the choice of $r$ in its equivalence class. 
For any $T\subset T'$, we
also consider the universal $T$-framed lifting ring $R_{\cS'}^{\square_T}$,
which is universal for liftings of type $\cS'$ together with choices of basis at
the places in $T$ (see Definitions 2.2.1 and 2.2.7 of~\cite{cht}).

  Returning to our specific situation, consider the deformation problem
\[\cS:=(F/F^+,T,\tT,\cO,\rhobar,\varepsilon^{-1},\{R_\tv^\square\}_{v\in
  R}\cup\{R_\tv^{\square,\lambda_v,\tau_v}\}_{v\in S_p}).\] (The
quotients $R_\tv^{\square,\lambda_v,\tau_v}$ satisfy the condition
above by Lemma 3.2.3 of \cite{gg}.) There is a corresponding universal
deformation $\rho_\cS^{\univ}:G_{F^+,T}\to\cG_2(R_\cS^{\univ})$ of
$\rhobar$.

The lifting of Theorem \ref{existence of repns} and the universal property of 
$\rho_\cS^{\univ}$ gives an $\cO$-homomorphism \[R_\cS^{\univ}\onto
\mathbb{T}^T_{\lambda,\tau}(U,\cO)_\mf{m},\]which is surjective by Theorem
\ref{existence of repns}(\ref{item:char poly of universal Hecke deformation}).
\end{para}

\subsection{Patching}\label{subsec:patching}

\begin{para} In order to apply the Taylor--Wiles--Kisin method, and in
  particular to choose the auxiliary primes used in the patching
  argument, it is
  necessary to make an assumption on the image of the global mod $p$
  representation $\rbar$. In our setting, it will be convenient for us to use
  the notion of an \emph{adequate} subgroup of $\GL_2(\Fpbar)$, which is
defined in \cite{jack}. We will not need to make use of the actual
definition; instead, we recall the following classification.
\end{para}
\begin{prop} 
\label{prop:adequacy for n=2} Suppose that $p>2$ is a prime, and 
that $G\subset \GL_2(\Fbar_p)$ is a finite subgroup which acts irreducibly on $\Fbar_p^2$. 
Then precisely one of the following is true:
\begin{itemize}
\item We have $p=3$, and the image of $G$ in $\PGL_2(\Fbar_3)$ is  conjugate to $\PSL_2(\F_3)$.
\item We have $p=5$, and the image of $G$ in $\PGL_2(\Fbar_5)$ is  conjugate to $\PSL_2(\F_5)$.
\item $G$ is adequate.
\end{itemize}
\end{prop}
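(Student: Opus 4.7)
The plan is to combine Dickson's classical classification of finite subgroups of $\PGL_2(\overline{\F}_p)$ with the definition of adequacy from Thorne's paper. By Dickson, the image $\overline{G}$ of $G$ in $\PGL_2(\overline{\F}_p)$ lies in one of the following classes: cyclic, dihedral, $A_4$, $S_4$, $A_5$, or of the form $\PSL_2(\F_{p^r})$ or $\PGL_2(\F_{p^r})$ for some $r \geq 1$. Since $G$ acts irreducibly on $\overline{\F}_p^2$, $\overline{G}$ does not lie in a Borel subgroup, so the cyclic case is ruled out. The goal is then to check, case by case, whether the remaining possibilities satisfy the adequacy axioms.

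First I would dispose of the generic cases. In the dihedral case, $G$ contains a normal index-two abelian subgroup acting by two distinct characters, which produces enough regular semisimple elements with distinct eigenvalue ratios to verify adequacy directly; the required cohomology vanishing on $\ad^0$ is classical for $p>2$. For $\overline{G}$ of the form $\PSL_2(\F_{p^r})$ or $\PGL_2(\F_{p^r})$, adequacy is a known consequence of the existence of a regular semisimple element (obtained from the torus structure of $\SL_2$) together with the vanishing of $H^1$ of these groups on the adjoint representation in defining characteristic; all such groups are adequate when $p>2$. For $\overline{G}\cong S_4$ one verifies adequacy by direct computation in the (well-understood) modular character table, which works for every $p>2$.

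The delicate step is the analysis of the exceptional subgroups $A_4 \cong \PSL_2(\F_3)$ and $A_5 \cong \PSL_2(\F_5)$. Here one must verify the trace non-degeneracy and $H^1$-vanishing axioms of adequacy by hand in terms of the modular representation theory of these small groups. The outcome is that $A_4$ fails adequacy precisely when $p=3$ (since then $3\mid |A_4|$ in a way that makes the adjoint cohomology nonvanishing and causes the trace form on the $\F$-span of the image to degenerate), while $A_5$ fails adequacy precisely when $p=5$ for the analogous reason; these are exactly the two exceptional cases listed in the statement. The main obstacle is this last verification: matching the failure of the adequacy axioms with the characteristic-dividing-order coincidences in $A_4$ and $A_5$, and ensuring that every other small-image or large-image configuration does satisfy adequacy. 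Both of these finite computations are available in Thorne's original paper \cite{jack} (and its erratum), which I would cite rather than reproduce.
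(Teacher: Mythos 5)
Your outline is correct in substance, but note that the paper does not prove this proposition at all: its entire proof is the citation ``This is Proposition A.2.1 of \cite{blggU2}.'' What you have written is essentially a reconstruction of the argument behind that cited result (which in turn rests on Thorne's appendix \cite{jack} and the adequacy computations of Guralnick--Herzig--Taylor--Thorne): apply Dickson's classification of finite subgroups of $\PGL_2(\Fbar_p)$, discard the subgroups of a Borel using irreducibility, observe that any image of order prime to $p$ acting irreducibly is automatically adequate (all cohomology vanishes and the span of the projectors $e_{g,\alpha}$ contains the span of $G$, which is all of $\End(V)$ by Burnside), and then check by hand the finitely many cases where $p$ divides the order of the projective image. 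So the two routes agree; yours simply makes the citation's content explicit, which is worth something expository but proves nothing the bare citation does not.

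One imprecision to fix if you keep the sketch: your blanket assertion that ``$\overline{G}$ of the form $\PSL_2(\F_{p^r})$ or $\PGL_2(\F_{p^r})$ ... all such groups are adequate when $p>2$'' is false as stated, since the two exceptions in the proposition \emph{are} $\PSL_2(\F_3)\cong A_4$ and $\PSL_2(\F_5)\cong A_5$; you must either exclude $p^r\in\{3,5\}$ from that case or make clear it is subsumed by your separate $A_4$/$A_5$ analysis. Relatedly, the genuinely delicate verifications are not only that $A_4$ fails at $p=3$ and $A_5$ fails at $p=5$, but also that the remaining order-divisible-by-$p$ configurations ($S_4\cong\PGL_2(\F_3)$ and $A_5$ at $p=3$, $\PGL_2(\F_5)$ at $p=5$) still satisfy all the adequacy axioms; these are exactly the finite computations you are deferring to \cite{jack}, which is legitimate but should be said explicitly.
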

\begin{proof}
  This is Proposition A.2.1 of \cite{blggU2}.
\end{proof}

\begin{para}\label{subsubsec:assumption on adequate}
Assume from now on that
\begin{itemize}
\item $\rbar(G_{F(\zeta_p)})$ is adequate.
\end{itemize}

We wish to consider auxiliary sets of primes in order to apply the
Taylor--Wiles--Kisin patching method. Let $(Q,\tQ,\{\psibar_\tv\}_{v\in
  Q})$ be a triple where
\begin{itemize}
\item $Q$ is a finite set of finite places of $F^+$ which is disjoint
  from $T$ and consists of places which split in $F$;
\item $\tQ$ consists of a single place $\tv$ of $F$ above each place
  $v$ of $F^+$;
\item for each $v\in Q$,
  $\rbar|_{G_{F_\tv}}\cong\psibar_\tv\oplus\psibar'_\tv$ where
  $\psibar_\tv\ne\psibar'_\tv$, and $\mathbf{N}v\equiv 1\pmod{p}$.
\end{itemize}For each $v\in Q$, let $R^{\psibar_\tv}_v$ denote the
quotient of $R^\square_v$ corresponding to lifts
$r:G_{F_\tv}\to\GL_2(A)$ which are
$\ker(\GL_2(A)\to\GL_2(\F))$-conjugate to a lift of the form
$\psi\oplus\psi'$, where $\psi$ is a lift of $\psibar_\tv$ and $\psi'$
is an unramified lift of $\psibar'_\tv$. We let $\cS_Q$ denote the
deformation problem 
\[\cS_Q=(F/F^+,T\cup Q,\tT\cup
\tQ,\cO,\rhobar,\varepsilon^{-1},\{R_\tv^\square\}_{v\in
  R}\cup\{R_\tv^{\square,\lambda_v,\tau_v}\}_{v\in
  S_p}\cup\{R^{\psibar_\tv}\}_{v\in Q}).\] We let $R_{\cS_Q}^{\univ}$
denote the corresponding universal deformation ring, and we let
$R^{\square_T}_{\cS_Q}$ denote the corresponding universal $T$-framed deformation
ring.

We define 
\[ R^{\loc} := \left(\widehat{\otimes}_{v \in
  S_p}R_\tv^{\square,\lambda_v,\tau_v}\right)\widehat{\otimes} \left(\widehat{\otimes}_{v \in
  R}{R}^{\square}_{\tv}\right) \]
where all completed tensor products are taken over $\mc{O}$. 
\end{para}

\begin{rem}\label{rem: formally smooth def rings} Let $v\in R$. Since we have assumed that the ratio of the
  eigenvalues of $\rhobar(\Frob_v)$ is not equal to
  $(\mathbf{N}v)^{\pm 1}$, and $(\mathbf{N}v) \neq 1$ mod $p,$ 
  ${R}^{\square}_{\rbarwtvL}$ is formally smooth of relative dimension
  $4$ over $\cO$ (this may be checked by computing the dimension of the reduced
  tangent space by the usual Galois cohomology calculation; see Lemma 2.4.9 of~\cite{cht}), and in particular all deformations of $\rbarwtvL$ are
  unramified. Applying Proposition \ref{defrings} above and Lemma
  3.3 of \cite{blght}, we see that  $R^{\loc}$ is equidimensional of dimension $1 + 4 |T| +
[F^+:\bb{Q}]$, and $R^\loc[1/p]$ is formally smooth.
\end{rem}

\begin{para} For each finite place $w$ of $F$, let $U_0(w)$ be the subgroup of
$\GL_2(\cO_{F,w})$ consisting of matrices congruent to $
\begin{pmatrix}
  *&*\\0&*
\end{pmatrix}$ modulo $w$, and let  $U_1(w)$ be the subgroup of
$\GL_2(\cO_{F,w})$ consisting of matrices congruent to $
\begin{pmatrix}
  *&*\\0&1
\end{pmatrix}$ modulo $w$. For $i=0,1,$ let 
$U_i(Q)=\prod_v U_i(Q)_v$ be the compact open subgroups of 
$G(\A_{F^+}^\infty)$ defined by  $U_i(Q)_v=U_v$ if $v\notin Q$, and
$U_i(Q)_v=\iota_{\tv}^{-1}U_i(\tv)$ if $v\in Q$. We have natural maps 
\[ \bb{T}_{\lambda,\tau}^{T\cup Q}(U_{1}(Q),\mc{O}) \onto \bb{T}_{\lambda,\tau}^{T\cup Q}(U_{0}(Q),\mc{O}) \onto \bb{T}_{\lambda,\tau}^{T\cup Q}(U,\mc{O}) \into \bb{T}_{\lambda,\tau}^T(U,\mc{O}).\]
Note that $\bb{T}_{\lambda,\tau}^{T \cup Q}(U,\mc{O})_{\mf{m}} = \bb{T}_{\lambda,\tau}^T(U,\mc{O})_{\mf{m}}$ by the proof of Corollary 3.4.5 of \cite{cht}.
By taking its image, $\mf{m}$ determines maximal ideals of the first three algebras in this sequence which we denote by $\mf{m}_{Q}$ for the first two and $\mf{m}$ for the third. 
\end{para}

\begin{para} Let $\rho_{\m_Q}:G_{F^+,T\cup Q}\to\GL_2(\bb{T}^{T\cup
  Q}_{\lambda,\tau}(U_1(Q),\mc{O})_{\mf{m}_{Q}})$ be the representation defined in Theorem
\ref{existence of repns}. For each $v \in Q$ choose an element
$\varphi_{\wt{v}} \in G_{F_{\wt{v}}}$ lifting the geometric Frobenius
element of $G_{F_{\tv}}/I_{F_{\tv}}$ and
let $\varpi_{\wt{v}} \in \mc{O}_{F_{\wt{v}}}$ be the uniformiser with
$\Art_{F_{\wt{v}}}\varpi_{\wt{v}}=\varphi_{\wt{v}}|_{F_{\wt{v}}^{\ab}}$. Let
$P_{\wt{v}}(X) \in \bb{T}^{T\cup
  Q}_{\lambda,\tau}(U_1(Q),\mc{O})_{\mf{m}_{Q}} [X]$ denote the characteristic
polynomial of $\rho_{\mf{m}_{Q}}(\varphi_{\wt{v}})$. Since $\overline{\psi}_{\wt{v}}(\varphi_{\wt{v}})\ne\overline{\psi}'_{\wt{v}}(\varphi_{\wt{v}})$,
by Hensel's lemma we can factor 
$P_{\wt{v}}(X)=(X-A_{\wt{v}})(X-B_{\wt{v}})$ where $A_{\wt{v}}, B_\tv \in \bb{T}^{T\cup Q}_{\lambda,\tau}(U_1(Q),\mc{O})_{\mf{m}_{Q}}$
lift $\overline{\psi}_{\wt{v}}(\varphi_{\wt{v}})$ and $\overline{\psi}'_{\wt{v}}(\varphi_{\wt{v}})$ respectively. 

For
$i=0,1$ and $\alpha \in F_{\wt{v}}^\times$ of non-negative valuation, consider the Hecke operator
\[V_{\alpha}:= \iota_{\wt{v}}^{-1}\left( \left[ U_{i}(\wt{v})  \left( \begin{matrix}
      1 & 0 \cr 0 & \alpha \end{matrix} \right)
  U_{i}(\wt{v}) \right] \right) \]
on $S_{\lambda,\tau}(U_i(Q),\mc{O})$. 
Denote by $\bb{T}^{T\cup  Q}_{\lambda,\tau}(U_i(Q),\mc{O})' 
\subset \End_\cO(S_{\lambda,\tau}(U_i(Q),\mc{O}))$
the $\cO$-subalgebra  generated by $\bb{T}^{T\cup
  Q}_{\lambda,\tau}(U_i(Q),\mc{O})$ and the $V_{\varpi_{\wt{v}}}$ for
$v\in Q$. We denote by $\mf{m}'_Q$ the maximal ideal of 
$\bb{T}^{T\cup Q}_{\lambda,\tau}(U_i(Q),\mc{O})'$ 
generated by $\mf{m}_Q$ and the $V_{\varpi_{\tv}}-A_{\wt{v}}.$ Write
$\T_{i,Q}:=\bb{T}^{T\cup
  Q}_{\lambda,\tau}(U_i(Q),\mc{O})'_{\mf{m}'_Q}$. 

Let 
  $\Delta_{Q}$ denote the maximal $p$-power
order quotient of $U_0(Q)/U_1(Q)$. Let $\mf{a}_{Q}$ denote the kernel of
the augmentation map $\mc{O}[\Delta_{Q}] \rightarrow \mc{O}$.
Exactly as in the proof of the sublemma of Theorem 3.6.1 of
 \cite{blgg}, we have:
\begin{enumerate}
\item The natural map
\[ \prod_{v\in Q}(V_{\varpi_{\tv}}-B_{\tv}): S_{\lambda,\tau}(U,\mc{O})_{\mf{m}} \rightarrow  
S_{\lambda,\tau}(U_0(Q),\mc{O})_{\mf{m}'_{Q}} \]
is an isomorphism.
\item $ S_{\lambda,\tau}(U_1(Q),\mc{O})_{\mf{m}'_{Q}}$ is free over $\mc{O}[\Delta_{Q}]$ with
\[  S_{\lambda,\tau}(U_1(Q),\mc{O})_{\mf{m}'_{Q}}/{\mf{a}_{Q}} \isoto  
S_{\lambda,\tau}(U_0(Q),\mc{O})_{\mf{m}'_Q} \isoto  
S_{\lambda,\tau}(U,\mc{O})_{\mf{m}}. \]
\item For each $v \in Q$, there is a character with
  open kernel $V_{\wt{v}} : F_{\wt{v}}^{\times} \rightarrow
  \bb{T}_{1,Q}^{\times}$ so that
  \begin{enumerate}
  \item For each element $\alpha \in F_{\wt{v}}^\times$ of non-negative valuation,
    $V_{\alpha}= V_{\wt{v}}(\alpha)$ on $S_{\lambda,\tau}(U_1(Q),\mc{O})_{\mf{m}'_{Q}}.$
  \item 
$(\rho_{\mf{m}_{Q}}\otimes_{\bb{T}^{T\cup Q}_{\lambda,\tau}(U_1(Q),\mc{O})_{\mf{m}_{Q}}} \bb{T}_{1,Q})|_{W_{F_{\wt{v}}}}
    \cong \psi' \oplus (V_{\wt{v}}\circ \Art_{F_{\wt{v}}}^{-1})$ with
    $\psi'$ an unramified lift of $\psibar'_{\wt{v}}$ and $(V_{\wt{v}}\circ \Art_{F_{\wt{v}}}^{-1})$ lifting $\overline{\psi}_{\wt{v}}$.
  \end{enumerate}
\end{enumerate}
\end{para}

\begin{para} The above shows, in particular, that 
the lift $\rho_{\mf{m}_{Q}} \otimes \bb{T}_{1,Q}$ of $\rhobar$ is of
type $\mc{S}_{Q}$ and gives rise to a surjection
$R^{\univ}_{\mc{S}_{Q}} \onto \bb{T}_{1,Q}$. We think of 
$S_{\lambda,\tau}(U_1(Q),\mc{O})_{\mf{m}_{Q}}$ as an $R^{\univ}_{\mc{S}_{Q}}$-module 
via this map.

Thinking of $\Delta_{Q}$ as the image of the product of the inertia subgroups in the 
maximal abelian $p$-power order quotient of
$\prod_{v \in Q} G_{F_{\wt{v}}}$, the determinant of any choice of
universal deformation $r^{\univ}_{\mc{S}_{Q}}$ gives rise to a
homomorphism $\Delta_{Q} \rightarrow (R^{\univ}_{\mc{S}_{Q}})^{\times}$. We
thus have homomorphisms 
$$\mc{O}[\Delta_{Q}] \rightarrow 
R^{\univ}_{\mc{S}_{Q}} \rightarrow R^{\square_T}_{\mc{S}_{Q}}$$ 
and natural isomorphisms $R^{\univ}_{\mc{S}_{Q}}/\mf{a}_{Q} \cong
R^{\univ}_{\mc{S}}$ and $R^{\square_T}_{\mc{S}_{Q}}/\mf{a}_{Q}
\cong R^{\square_T}_{\mc{S}}$. (This follows from (3)(b) above, which shows that
for each place $v\in Q$,
the ramification of $r^{\univ}_{\mc{S}_{Q}}$ at $\tv$ is given by the character $(V_{\wt{v}}\circ \Art_{F_{\wt{v}}}^{-1})$.)
\end{para}

\begin{para} 
  We have assumed that $\rbar(G_{F(\zeta_p)})$ is adequate, or
  equivalently (because we are considering 2-dimensional
  representations) big in the terminology of \cite{cht}. By
  Proposition 2.5.9 of \cite{cht}, this implies that we can (and do)
  choose an integer $q\ge[F^+:\Q]$ and for each $N\ge 1$ a tuple
  $(Q_N,\tQ_N,\{\psibar_\tv\}_{v\in Q_N})$ as above such that
\begin{itemize}
\item $\# Q_N = q$ for all $N$;
\item $\mathbf{N}v \equiv 1 \mod p^N$ for $v \in Q_N$;
\item the ring $R^{\square_T}_{\mc{S}_{Q_N}}$ can be topologically
  generated over $R^{\loc}$ by $q-[F^+:\Q]$ elements.
\end{itemize}
We will apply the above constructions to each of these tuples 
$(Q_N,\tQ_N,\{\psibar_\tv\}_{v\in
  Q_N})$.

Choose a lift $r^{\univ}_{\mc{S}} : G_{F^+,S} \rightarrow
\mc{G}_2(R^{\univ}_{\mc{S}})$ representing the universal
deformation. Let
\[ \mc{T} = \mc{O}[[X_{v,i,j}:v \in T, i,j = 1,2]].\] 
The tuple $(r^{\univ}_{\mc{S}},(1_2+X_{v,i,j})_{v \in T})$ (where
$1_2+X_{v,i,j}$ is a $2\times 2$ matrix) gives rise to an isomorphism
$R^{\square_T}_{\mc{S}} \isoto
R^{\univ}_{\mc{S}}\widehat{\otimes}_{\mc{O}} \mc{T}.$ (Note that the
action of $j$ in the group $\cG_2$ implies that this tuple has no non-trivial
scalar endomorphisms.)
For each $N$, choose a lift $r^{\univ}_{\mc{S}_{Q_N}} : G_{F^+}
\rightarrow \mc{G}_2(R^{\univ}_{\mc{S}_{Q_N}})$ representing the
universal deformation, with $r^{\univ}_{S_{Q_N}} \mod
\mf{a}_{Q_N}=r^{\univ}_{\mc{S}}$. This gives rise to an isomorphism
$R^{\square_T}_{\mc{S}_{Q_N}} \isoto
R^{\univ}_{\mc{S}_{Q_N}}\widehat{\otimes}_{\mc{O}}\mc{T}$ which
reduces modulo $\mf{a}_{Q_N}$ to the isomorphism
$R^{\square_T}_{\mc{S}}\isoto
R^{\univ}_{\mc{S}}\widehat{\otimes}_{\mc{O}} \mc{T}$. 

We let
\begin{eqnarray*}
  M & = & S_{\lambda,\tau}(U,\mc{O})_{\mf{m}}
  \\
  M_{N} & = & S_{\lambda,\tau}(U_1(Q_N),\mc{O})_{\mf{m}_{Q_N}'}
  \otimes_{R^{\univ}_{\mc{S}_{Q_N}}} R^{\square_T}_{\mc{S}_{Q_N}}.
\end{eqnarray*}
Then $M_{N}$ is a finite free
$\mc{T}[\Delta_{Q_N}]$-module with 
$M_N/\mf{a}_{Q_N} \cong M\otimes_{R^{\univ}_{\mc{S}}}R^{\square_T}_{\mc{S}} ,$
compatibly with the isomorphism
$R^{\square_T}_{\mc{S}_{Q_N}}/\mf{a}_{Q_N} \cong R^{\square_T}_{\mc{S}}$.

Fix a filtration by $\F$-subspaces \[0=L_0\subset
L_1\subset\dots\subset L_s=L_{\lambda,\tau}\otimes_{\cO} \F\]such that each
$L_i$ is $G(\cO_{F^+_p})$-stable, and for each $i=0,1,\dots,s-1$,
$\sigma_i:=L_{i+1}/L_i$ is an absolutely irreducible representation of 
$G(\cO_{F^+_p}).$ This in turn
induces a filtration on $S_{\lambda,\tau}(U,\cO)_\mf{m}\otimes_\cO\F$
(respectively
$S_{\lambda,\tau}(U_1(Q_N),\cO)_{\mf{m}_{Q_N}}\otimes_\cO\F$) whose
graded pieces are the finite-dimensional $\F$-vector spaces
$S(U,\sigma_i)_\mf{m}$ (respectively the finite free
$\F[\Delta_{Q_N}]$-modules $S(U_1(Q_N),\sigma_i)_{\mf{m}_{Q_N}}$). By
extension of scalars we obtain a filtration on 
$M_{N}^{}\otimes_\cO\F.$ We denote these filtrations by \[0=M^{0}\subset
M^{1}\subset\dots\subset
M^{s}=M^{}\otimes_\cO
\F\]and \[0=M_{N}^{0}\subset
M_{N}^{1}\subset\dots\subset
M_{N}^{s}=M_{N}^{}\otimes_\cO \F.\]

Let $g=q-[F^+:\Q]$ and let
\begin{eqnarray*}
  \Delta_{\infty} & = & \bb{Z}_p^q, \\
  R_{\infty} & = & R^{\loc}[[x_1,\ldots,x_g]], \\
  R'_{\infty} & = & \left(\widehat{\otimes}_{v \in
  T}R_\tv^{\square}\right)[[x_1,\ldots,x_g]], \\
  S_{\infty} & = & \mc{T}[[\Delta_{\infty}]], 
\end{eqnarray*}
and let $\mf{a}$ denote the kernel of the $\mc{O}$-algebra homomorphism $S_{\infty}
\rightarrow \mc{O}$ which sends each $X_{v,i,j}$ to 0 and each element
of $\Delta_{\infty}$ to 1. Note that $S_\infty$ is  formally smooth
over $\bigO$ of relative dimension $q+4|T|$, and that $R_\infty$ is a
quotient of $R'_\infty$. For each $N$, choose a surjection
$\Delta_{\infty} \onto \Delta_{Q_N}$ and let $\mf{c}_N$ denote the
kernel of the corresponding homomorphism $S_{\infty} \onto \mc{T}[\Delta_{Q_N}]$.
For each $N \geq 1$, choose a surjection of $R^{\loc}$-algebras
\[ R_{\infty} \onto R^{\square_T}_{\mc{S}_{Q_N}}.\]
We regard each $R^{\square_T}_{\mc{S}_{Q_N}}$ as an
$S_{\infty}$-algebra via $S_{\infty}\onto \mc{T}[\Delta_{Q_N}]
\rightarrow R^{\square_T}_{\mc{S}_{Q_N}}$. In particular,
$R^{\square_T}_{\mc{S}_{Q_N}}/\mf{a} \cong R^{\univ}_{\mc{S}}$.

Now a patching argument as in \cite{kisinfmc} 2.2.9 shows that there exists 
\begin{itemize}
\item an $\cO$-module homomorphism $S_\infty\to R_\infty$, and an
  $R_\infty$-module $M_\infty$ which is finite free as an $S_{\infty}$-module,
\item a filtration by $R_{\infty}$-modules 
\[0=M_{\infty}^{0}\subset M_{\infty}^{1}\subset\dots\subset M_{\infty}^{s}=M_{\infty}^{}\otimes_\cO \F\] 
whose graded pieces are finite free $S_{\infty}/\pi S_{\infty}$-modules,
\item a surjection of $R^{\loc}$-algebras $R_{\infty}/\mf{a}R_{\infty}
  \rightarrow R^{\univ}_{\mc{S}}$, and
\item an isomorphism of $R_{\infty}$-modules
  $M_{\infty}/\mf{a}M_{\infty} \iso M$ which identifies $M^i$ with $M^i_{\infty}/\mf{a}M^i_{\infty}$.
\end{itemize}

We claim that we can make the above construction so that for $i=1,2,\dots s,$ 
the $(R'_{\infty},S_{\infty})$-bimodule $M^i_{\infty}/M^{i-1}_\infty$ and the isomorphism 
$M^i_{\infty}/(\mf{a}M^i_{\infty}+M^{i-1}_\infty) \iso M^i/M^{i-1}$ depends only on ($U,\mf{m}$ and) the isomorphism class 
of $L_i/L_{i-1}$ as a $G(\O_{F^+_p})$-representation, but not on $(\lambda,\tau).$ For any finite 
collection of pairs $(\lambda,\tau)$ this follows by the same finiteness argument used during patching. 
Since the set of $(\lambda,\tau)$ is countable, the claim follows from a diagonalization argument.

For $\sigma$ a global Serre weight, we denote by $M^\sigma_{\infty}$ the $R_{\infty}/\pi R_{\infty}$-module constructed above 
when $L_i/L_{i-1} \iso \sigma,$ and we set 
$$ \mu_\sigma'(\rbar) = 2^{-|R|}e_{R_{\infty}/\pi}(M^\sigma_\infty).$$
\end{para}

\begin{lem}\label{key patching lemma} For each $\sigma,$ $\mu'_\sigma(\rbar)$ is a non-negative integer. 
Moreover, the following conditions are equivalent.
\begin{enumerate}
\item The support of $M$ meets every irreducible component of $\Spec R^{\loc}[1/p].$
\item $M_{\infty}\otimes_{\Z_p}\Q_p$ is a faithfully flat $R_{\infty}[1/p]$-module 
which is locally free of rank $2^{|R|}.$
\item 
$R^{\univ}_{\mc{S}}$ is a finite $\O$-algebra and $M\otimes_{\Z_p}\Q_p$ is a faithful $R^{\univ}_{\mc{S}}[1/p]$-module.  
\item 
\[e(R_{\infty}/\pi R_{\infty}) =  \sum_{i=1}^s 2^{-|R|}e_{R_{\infty}/\pi}(M^{\sigma_i}_{\infty}) = \sum_{i=1}^s \mu_{\sigma_i}'(\rbar)\]
where $\sigma_i$ is a global Serre weight with $L_i/L_{i-1} \iso \sigma_i.$ 
\end{enumerate}
\end{lem}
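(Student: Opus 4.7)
The plan is to argue from three main ingredients: the dimension equality $\dim R_\infty = \dim S_\infty = 1+4\#T+q$ (which follows from Remark~\ref{rem: formally smooth def rings} and the fact that $R_\infty = R^\loc[[x_1,\dots,x_g]]$ with $g=q-[F^+:\Q]$ and $S_\infty$ formally smooth of relative dimension $q+4\#T$ over $\cO$); the formal smoothness of $R_\infty[1/p]$ over $E$, so that its irreducible components are in bijection with those of $R^\loc[1/p]$, of dimension $\dim R_\infty - 1$; and the freeness of $M_\infty$ over $S_\infty$, together with the fact that the $R_\infty$-action factors through $S_\infty$-linear endomorphisms.

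First I would establish non-negativity. Since $M_\infty$ is $S_\infty$-free of finite rank and the $R_\infty$-action commutes with $S_\infty$, the image of $R_\infty$ in $\End_\cO(M_\infty)$ is a finite $S_\infty$-algebra, so the support of $M_\infty$ in $\Spec R_\infty$ has dimension at most $\dim S_\infty = \dim R_\infty$. Each graded piece $M^i_\infty/M^{i-1}_\infty$ is free over $S_\infty/\pi S_\infty$, so is a finite $R_\infty/\pi$-module of Krull dimension $\le \dim R_\infty/\pi$, hence its Hilbert--Samuel multiplicity is a well-defined non-negative integer. Divisibility by $2^{|R|}$ is checked by specializing to a pair $(\lambda,\tau)$ (for instance one for which a suitable modularity lifting theorem is available) for which $L_{\lambda,\tau}\otimes \F$ is irreducible and conditions (1)--(3) hold, using the fact, stated in the previous subsection, that $S_{\lambda,\tau}(U,\cO)_\m[1/p]$ is locally free of rank $2^{|R|}$ over $\T^T_{\lambda,\tau}(U,\cO)_\m[1/p]$; by construction the graded pieces of $M_\infty$ do not depend on $(\lambda,\tau)$.

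Next I would prove the string (1)$\Leftrightarrow$(2)$\Leftrightarrow$(3), following the standard patching template of \cite{kisinfmc}. For (1)$\Rightarrow$(2): if $\Supp M_\infty[1/p]$ meets every component of $\Spec R^\loc[1/p]$, it meets every component of $\Spec R_\infty[1/p]$, which has dimension $\dim R_\infty - 1$; since $\Supp M_\infty$ has dimension $\le \dim S_\infty = \dim R_\infty$ and is stable under specialization, it must equal the union of these components. Formal smoothness of $R_\infty[1/p]$ combined with $S_\infty$-freeness then forces $M_\infty[1/p]$ to be locally free over $R_\infty[1/p]$ (a miracle-flatness argument from the surjection $S_\infty \to R_\infty/\Ann M_\infty$ becoming an isogeny after inverting $p$), and the generic rank equals $2^{|R|}$ by comparison with the Hecke module (evaluating at a closed point corresponding to an automorphic form). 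For (2)$\Rightarrow$(3): reducing modulo $\mathfrak{a}$ gives $M = M_\infty/\mathfrak{a}M_\infty$ as a module over $R_\infty/\mathfrak{a}R_\infty \twoheadrightarrow R^\univ_\cS$; faithful flatness over $R_\infty[1/p]$ descends to faithfulness of $M[1/p]$ over $R^\univ_\cS[1/p]$, and $R^\univ_\cS$ is finite over $\cO$ because $M$ is. For (3)$\Rightarrow$(1): the surjection $R^\univ_\cS \twoheadrightarrow \T^T_{\lambda,\tau}(U,\cO)_\m$ combined with Theorem~\ref{existence of repns}(\ref{item:local at l=p behaviour of universal Hecke deformation}) gives $R^\loc$-algebra maps from $R^\univ_\cS[1/p]$ to every potentially crystalline automorphic point, and faithfulness of $M[1/p]$ over $R^\univ_\cS[1/p]$ forces $\Supp M$ to hit every component of $\Spec R^\loc[1/p]$.

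Finally I would verify (2)$\Leftrightarrow$(4) by a multiplicity computation. Assuming (2), we have $e_{R_\infty/\pi}(M_\infty/\pi M_\infty) = 2^{|R|}\,e(R_\infty/\pi)$, since $M_\infty[1/p]$ is locally free of rank $2^{|R|}$ over the formally smooth ring $R_\infty[1/p]$. Additivity of Hilbert--Samuel multiplicities on short exact sequences gives
\[ e_{R_\infty/\pi}(M_\infty/\pi) = \sum_{i=1}^s e_{R_\infty/\pi}(M^i_\infty/M^{i-1}_\infty) = \sum_{i=1}^s 2^{|R|}\mu'_{\sigma_i}(\rbar), \]
which rearranges to (4). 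Conversely, (4) forces the support of $M_\infty$ to have maximal dimension on every component of $\Spec R_\infty/\pi$, because otherwise the sum on the right would be strictly less than $e(R_\infty/\pi)$; lifting this to characteristic zero via formal smoothness of $R_\infty[1/p]$ yields (1). The main obstacle I anticipate is the dimension/miracle-flatness step establishing local freeness of rank exactly $2^{|R|}$ in the proof of (1)$\Rightarrow$(2); this is where one must marry the equidimensionality of $R^\loc[1/p]$ from Remark~\ref{rem: formally smooth def rings} with the $S_\infty$-freeness of $M_\infty$ and the rank statement supplied by the Hecke-module structure.
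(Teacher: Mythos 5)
Your overall template (Auslander--Buchsbaum/depth to get finite flatness of $M_{\infty}\otimes_{\Z_p}\Q_p$ over the regular ring $R_{\infty}[1/p]$, the rank computation via the fibre over $\mathfrak{a}$ and the $2^{|R|}$-freeness of the Hecke module, additivity of multiplicities for the filtration) matches the paper's, but two steps have genuine gaps. First, your argument for the integrality of $\mu'_{\sigma}(\rbar)$ requires exhibiting, for each $\sigma$, a pair $(\lambda,\tau)$ with $L_{\lambda,\tau}\otimes\F\cong\sigma$ \emph{for which conditions (1)--(3) hold}. Verifying (1)--(3) for a given $(\lambda,\tau)$ is precisely the hard automorphy-lifting input (Corollary \ref{cor: main patching lemma in the potentially diagonalizable case}), which is only available under potential-diagonalizability hypotheses; for a general weight $\sigma$ (e.g.\ $K/\Q_p$ ramified, or $\sigma$ non-regular) no such statement is known --- indeed, if (1)--(3) held for every $(\lambda,\tau)$ the full Breuil--M\'ezard conjecture would follow at once. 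The paper instead gets integrality unconditionally from Proposition 1.3.4 of \cite{kisinfmc}: since $M_{\infty}[1/p]$ is finite flat of rank exactly $2^{|R|}$ over \emph{each} component of $\Spec R_{\infty}[1/p]$ that it meets (whether or not it meets all of them), the associativity formula expresses $e_{R_{\infty}/\pi}(M_{\infty}/\pi)$ as $2^{|R|}$ times a sum of component multiplicities, with no faithfulness needed. You should redo this step so that it does not presuppose any case of the conclusion.

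Second, your proof of (3)$\Rightarrow$(1) only shows that the (finitely many) $\Qpbar$-points of $R^{\univ}_{\mc{S}}$ are potentially crystalline of the right type, i.e.\ that $\Spec R^{\univ}_{\mc{S}}[1/p]$ maps \emph{into} $\Spec R^{\loc}[1/p]$; it does not show that its image meets \emph{every} irreducible component, which is the whole content of the implication. The missing ingredient is Proposition 1.5.1 of \cite{BLGGT}: using the Galois-cohomological presentation of $R^{\univ}_{\mc{S}}$ over $R^{\loc}$ and a dimension count, the finiteness of $R^{\univ}_{\mc{S}}$ over $\O$ forces the preimage of each component of $\Spec R^{\loc}[1/p]$ to be nonempty of dimension at least one, hence to contain a $\Qpbar$-point; combined with faithfulness of $M\otimes_{\Z_p}\Q_p$ this gives (1). (A smaller point: in (1)$\Rightarrow$(2) your claim that the support equals the union of the components ``by dimension and stability under specialization'' does not follow as stated --- a closed set can meet every component of a $d$-dimensional space without containing any of them; you need to first establish finite flatness, so that the support is open and closed, and then conclude.)
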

\begin{proof} We argue in a similar fashion to the proof of Lemma~2.2.11
  of~\cite{kisinfmc}. By Remark \ref{rem: formally smooth def rings},
  $R_{\infty}[1/p]$ is formally smooth of dimension $q+4|T| = \dim
  S_{\infty}[1/p].$ Since $M_{\infty}$ is free over $S_{\infty}$, the module
  $M_{\infty}\otimes_{\Z_p}\Q_p$ has depth $q+4|T|$ at every maximal ideal of
  $R_{\infty}[1/p]$ in its support. By the Auslander--Buchsbaum formula,
  $M_{\infty}\otimes_{\Z_p}\Q_p$ has projective dimension $0$, and as it is
  finite over $S_\infty[1/p]$ it is also finite and therefore finite flat over
  $R_{\infty}[1/p].$

If $Z \subset \Spec R_{\infty}[1/p]$ is an irreducible component in the support of $M_{\infty},$ then $Z$ is 
finite over $\Spec S_{\infty}[1/p]$ and of the same dimension. 
Hence the map $Z \rightarrow \Spec S_{\infty}[1/p]$ 
is surjective. As $(M_{\infty}/\mf{a}M_{\infty})[1/p] = M[1/p]$ has rank $2^{|R|}$ over any point of $R^{\univ}_{\mc{S}}[1/p]$ 
in its support,  $M_{\infty}\otimes_{\Z_p}\Q_p$ has rank $2^{|R|}$ over $Z.$ This shows that (1) and (2) are equivalent.

Let $\Spec A \subset \Spec R_{\infty}$ denote the closure of the
support of $M_{\infty},$ on $\Spec R_{\infty}[1/p].$ By what we have just seen, there exists 
a map $A^{2^{|R|}} \rightarrow M_{\infty},$ which is an isomorphism at the generic points of 
$\Spec A.$ Using Proposition 1.3.4 of \cite{kisinfmc} we see that 
$$ 2^{-|R|} e_{R_\infty/\pi}(M_{\infty}/\pi M_{\infty}) = 
2^{-|R|} e_{A/\pi}(M_{\infty}/\pi M_{\infty}) = e(A/\pi)$$
is an integer. If $\sigma = \sigma_a$ is a global Serre weight, as in Section~\ref{subsec: globalserreweights}, 
then applying the above with $(\lambda,\tau) = (\lambda_a,1)$ shows that 
$\mu'_{\sigma}(\rbar)$ is an integer.

We also have 
\[e(R_{\infty}/\pi R_{\infty}) \geq 2^{-|R|} e_{R_\infty/\pi}(M_{\infty}/\pi M_{\infty}) 
= \sum_{i=1}^s 2^{-|R|}e_{R_{\infty}/\pi}(M^{\sigma_i}_{\infty})\]
 with equality if and only if $M_{\infty}$ is a faithful
 $R_{\infty}$-module or, equivalently, if and only if 
the support of $M$ meets every irreducible component of $R_{\infty}[1/p].$ 
So (1) and (4) are equivalent. 

Finally, if $M_{\infty}$ is a faithful $R_{\infty}$-module, then
$R_{\infty}$ is finite over $S_{\infty},$ and so $R^{\univ}_{\mc{S}},$
which is a quotient of $R_{\infty}/\mf{a},$ is a finite
$\O$-module. This shows that (2) implies (3). The converse is a consequence of
the Khare--Wintenberger argument, cf.\ Thm.\ 3.3 of~\cite{kw}.  To be precise, assuming
$R^{\univ}_{\mc{S}}$ is a finite $\O$-algebra, we see that the image of
$$ \Spec R^{\univ}_{\mc{S}} \rightarrow \Spec R^{\loc}$$ 
meets every component of $\Spec R^{\loc}[1/p],$ because it follows from Proposition
1.5.1(2) of \cite{BLGGT}  that  the quotient $R^{\univ}_{\mc{S}}$ 
corresponding to any particular component of $\Spec R^{\loc}[1/p]$ is a finite $\cO$-algebra of dimension
at least $1$, and therefore has $\Qpbar$-points. Hence if $M\otimes_{\Z_p}\Q_p$ is a faithful $R^{\univ}_{\mc{S}}$-module, 
then the support of $M$ meets every component of $\Spec R^\loc[1/p]$, which is (1). 
\end{proof}

Let $\sigma = \otimes_{v \in S_p} \sigma_v$ be a global Serre weight.  
The following lemma will be useful in order to determine the
$\mu'_\sigma(\rbar)$ more precisely in some situations. 
\begin{lem}
  \label{lem: multiplicity positive implies Serre weight and thus
    crystalline lift}The multiplicity $\mu'_\sigma(\rbar)$ is nonzero if
  and only if $S(U,\sigma)_\mathfrak{m}\ne 0$. If this holds, then for
  each place $v|p$ of $F$, $\rbar|_{G_{F_v}}$ has a crystalline lift of Hodge
  type $\sigma_v$ in the sense of section \ref{defn: lift of Hodge type a}.
\end{lem}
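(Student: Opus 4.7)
The plan is to handle the equivalence and the crystalline-lift statement separately, in each case by unraveling the patching construction back to the finite-level objects $S(U,\sigma)_\mathfrak{m}$.

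For the equivalence $\mu'_\sigma(\rbar)\neq 0 \iff S(U,\sigma)_\mathfrak{m}\neq 0$: by construction $M^\sigma_\infty$ is finite free over $S_\infty/\pi S_\infty$, so $\mu'_\sigma(\rbar)=2^{-|R|}e_{R_\infty/\pi}(M^\sigma_\infty)$ vanishes precisely when $M^\sigma_\infty=0$. Since $M^\sigma_\infty$ is free over $S_\infty/\pi$ of some rank $r$, one has $M^\sigma_\infty/\mf{a}M^\sigma_\infty\cong (S_\infty/(\pi,\mf{a}))^r=\F^r$, so $M^\sigma_\infty=0$ if and only if $M^\sigma_\infty/\mf{a}M^\sigma_\infty=0$. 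The patching construction identifies $M^\sigma_\infty/\mf{a}M^\sigma_\infty$ with the graded piece $M^i/M^{i-1}$ of the filtration on $M\otimes_\cO\F$ for any $i$ with $L_i/L_{i-1}\cong\sigma$; via the framing $M=S_{\lambda,\tau}(U,\cO)_\mathfrak{m}\otimes_{R^\univ_\cS}R^{\square_T}_\cS$ together with the identification $R^{\square_T}_\cS\cong R^\univ_\cS\,\hat\otimes_\cO\,\cT$ (with $\cT$ formally smooth over $\cO$, hence a faithfully flat base change), this graded piece is nonzero precisely when $S(U,\sigma)_\mathfrak{m}$ is.

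For the crystalline lift, suppose $S(U,\sigma)_\mathfrak{m}\neq 0$ and fix $a=(a_v)_{v\in S_p}$ with $\sigma_a\cong\sigma$. Apply the Hecke/Galois setup of Section~\ref{sec: Galois repns} with $\lambda:=\lambda_a$ and with $\tau_v:=\mathbf{1}$ trivial at each $v\in S_p$. Then $\sigma(\tau_v)=\mathbf{1}$, so $L_{\lambda,\tau}=W_\lambda$ with $L_{\lambda,\tau}\otimes_\cO\F\cong\sigma$; sufficient smallness of $U$ yields $S_{\lambda,\tau}(U,\cO)_\mathfrak{m}\otimes_\cO\F\cong S(U,\sigma)_\mathfrak{m}\neq 0$, and Nakayama then forces $S_{\lambda,\tau}(U,\cO)_\mathfrak{m}\neq 0$. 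Hence $\bb{T}^T_{\lambda,\tau}(U,\cO)_\mathfrak{m}$ is a nonzero finite $\cO$-algebra and admits a $\Qpbar$-valued point $x$. By Theorem~\ref{existence of repns}(4), $x\circ\rho_\mathfrak{m}|_{G_{F_\tv}}$ is potentially crystalline of trivial Galois type (so crystalline) and of Hodge type $\lambda_{a_v}$, which is precisely a crystalline lift of $\rbar|_{G_{F_\tv}}$ of Hodge type $\sigma_v$ in the sense of Definition~\ref{defn: lift of Hodge type a}. The statement at the conjugate places $\tv^c$ is then obtained either by rerunning the construction with the opposite choice of places above $p$, or directly from the relation $\rbar^c\cong\rbar^\vee\varepsilonbar^{-1}$.

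The only substantive step is the identification in the first paragraph of $M^\sigma_\infty/\mf{a}M^\sigma_\infty$ with $S(U,\sigma)_\mathfrak{m}$ up to a faithfully flat base change; everything else reduces to Nakayama and to the local-global compatibility recorded in Theorem~\ref{existence of repns}(4).
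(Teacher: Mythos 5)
Your argument is correct and is essentially the paper's own proof: the equivalence is obtained by unraveling the patching construction ($\mu'_\sigma(\rbar)\ne 0$ iff $M^\sigma_\infty\ne 0$ iff $M^\sigma_\infty/\mathfrak{a}M^\sigma_\infty\ne 0$, which is $S(U,\sigma)_\mathfrak{m}$ up to the faithfully flat framing factor), and the crystalline lift is produced exactly as you do, by passing to weight $\lambda_a$ with trivial type, using that $U$ is sufficiently small together with Nakayama to get $S_{\lambda_a,1}(U,\cO)_\mathfrak{m}\ne 0$, and invoking local--global compatibility from Theorem \ref{existence of repns}(4). Your extra care with the conjugate places $\tv^c$ is a reasonable addition but not needed for how the paper uses the lemma.
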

\begin{proof}
  By definition, $\mu'_\sigma(\rbar)\ne 0$ if and only if $M^\sigma_\infty\ne
  0$. Moreover, $M^\sigma_\infty\ne 0$ if and only if
  $M^\sigma_\infty/\mathfrak{a}M^\sigma_\infty\ne 0$, and
  $M^\sigma_\infty/\mathfrak{a}M^\sigma_\infty\cong S(U,\sigma)_\mathfrak{m}$ by
  definition, so we indeed have $\mu'_\sigma(\rbar)\ne 0$ if and only if
  $S(U,\sigma)_\mathfrak{m}\ne 0$.

 For the second part, let $a = (a_v)_{v \in S_p}$ with 
$a_v \in (\Z^2_+)^{\Hom(k_v,\F)}_{\SW}$ and $\sigma_v \cong \sigma_{a_v}.$
Note that since $U$ is sufficiently small, we have $S(U,\sigma)_\mathfrak{m}\ne 0$ if and only if
$S_{\lambda_a,1}(U,\cO)\ne 0,$ where $\lambda_a$ is defined in section
\ref{subsec: globalserreweights}, and $1$ denotes the
trivial type. The result then follows at
once from Theorem \ref{existence of repns}(4).
\end{proof}

\subsection{Potential diagonalizability}\label{subsec: pot diag}We now
use the methods of \cite{BLGGT} to show that the equivalent conditions
of Lemma \ref{key patching lemma} are frequently achieved. We begin by
recalling the definition of \emph{potential diagonalizability}, a 
notion defined in \cite{BLGGT}. We will use this definition here for
convenience, as it allows us to make use of certain results from
\cite{blggU2}, and to easily argue simultaneously in the potentially
Barsotti--Tate and Fontaine--Laffaille cases. 

Suppose that $K/\Qp$ is a finite extension 
with residue field $k,$ that $E/\Qp$ is a finite extension with ring of integers $\cO$ and
residue field $\F$,
and that $\rho_1,\rho_2:G_K\to\GL_2(\cO)$ are two continuous
representations. We say that $\rho_1$ \emph{connects} to $\rho_2$ if
all of the following hold:
\begin{itemize}
\item $\rho_1$ and $\rho_2$ are both crystalline of the same Hodge
  type $\lambda$,
\item $\rhobar_1\cong\rhobar_2$, and
\item  $\rho_1$ and $\rho_2$ define points on the same irreducible
  component of $R^{\square,\lambda}_{\rhobar_1}\otimes_{\cO}\Qpbar$.
\end{itemize}
We say that $\rho:G_K\to\GL_2(\cO)$ is \emph{diagonal} if it is a
direct sum of crystalline characters, and we say that $\rho$ is
\emph{diagonalizable} if it connects to some diagonal
representation. Finally, we say that $\rho$ is \emph{potentially
  diagonalizable} if there is a finite extension $L/K$ such that
$\rho|_{G_L}$ is diagonalizable. We say that a representation
$G_K\to\GL_2(E)$ is potentially diagonalizable if the representation
on some $G_K$-invariant lattice is potentially diagonalizable; this is
independent of the choice of lattice by Lemma 1.4.1 of \cite{BLGGT}.

The following two lemmas, which rely on our earlier papers, are the
key to our applications of Lemma \ref{key patching lemma} to the
Breuil--M\'ezard conjecture for potentially Barsotti--Tate representations.
\begin{lem}
  \label{lem: pot BT implies pot diag}If $\rho:G_K\to\GL_2(E)$ is
  potentially Barsotti--Tate, then it is potentially diagonalizable.
\end{lem}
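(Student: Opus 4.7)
Since $\rho$ is potentially Barsotti--Tate, we may pass to a finite extension $L/K$ over which $\rho|_{G_L}$ is BT. Because potential diagonalizability is insensitive to further finite base change, I may enlarge $L$ freely; in particular I will assume that $\bar\rho|_{G_L}$ is the trivial $\F$-representation of $G_L$ and that the inertial type of $\rho|_{G_L}$ is trivial. It therefore suffices to prove that any BT representation $\rho: G_L \to \GL_2(E)$ with trivial residual representation and trivial inertial type is diagonalizable, i.e., lies on the same irreducible component of $\Spec R^{\square,0}_{\bar\rho}[1/p]$ as a direct sum of crystalline characters.

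The plan is to analyze the components of this BT deformation ring using Kisin's theory of moduli of finite flat group schemes from \cite{kisindefrings}. The representation $\rho$ defines a point on some irreducible component $Z$ of $\Spec R^{\square,0}_{\bar\rho}[1/p]$. I would show that $Z$ contains an ordinary BT point, namely an extension of an unramified character by an unramified twist of $\varepsilon$, by exhibiting an explicit Kisin module of the correct shape and arguing that such ordinary points are dense on each component in the BT moduli. After a further finite extension $L'/L$ splitting the ordinary extension class, this ordinary representation becomes a direct sum $\chi_1 \oplus \chi_2$ of two crystalline characters. Pulling this back, $\rho|_{G_{L'}}$ and $\chi_1 \oplus \chi_2$ lie on the same component of $\Spec R^{\square,0}_{\bar\rho|_{G_{L'}}}[1/p]$, so $\rho|_{G_{L'}}$ is diagonalizable and hence $\rho$ is potentially diagonalizable.

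The main obstacle is ensuring that irreducible components are controlled under the restriction map $R^{\square,0}_{\bar\rho|_{G_L}} \to R^{\square,0}_{\bar\rho|_{G_{L'}}}$, so that the chosen ordinary point and $\rho|_{G_L}$ remain on the same component after restricting to $G_{L'}$. Under our assumption of trivial residual representation and trivial type, this should follow from the connectedness results for the BT moduli in \cite{kisindefrings}; alternatively, one can argue more directly by explicitly constructing, on the component containing $\rho$, a one-parameter family of BT deformations interpolating $\rho|_{G_L}$ with an ordinary (and hence, after $L'/L$, diagonal) representation. Either route reduces the problem to a concrete calculation in the theory of Kisin modules of $E$-height at most $1$.
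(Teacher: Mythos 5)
Your overall framework (pass to $L$ with $\rhobar|_{G_L}$ trivial and trivial inertial type, then connect $\rho|_{G_L}$ to a diagonal point on the same component of the Barsotti--Tate deformation ring) is the paper's, but the central step is wrong. You propose to show that the component $Z$ containing $\rho|_{G_L}$ contains an ordinary point, ``arguing that such ordinary points are dense on each component in the BT moduli.'' This is false. The relevant connectedness results (Corollary 2.5.16 of \cite{kis04} and Proposition 2.3 of \cite{MR2280776}, which are exactly what the paper invokes) say that when $\rhobar|_{G_L}$ is trivial and $\zeta_p\in L$, $\Spec R^{\square,0}_{\rhobar|_{G_L}}[1/p]$ has precisely two connected components, the ordinary one and the non-ordinary one; the non-ordinary component contains \emph{no} ordinary points whatsoever (ordinarity is detected by the shape of the associated Kisin modules and is constant on components). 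So if $\rho|_{G_L}$ is non-ordinary, your argument produces nothing, and no amount of computation with Kisin modules of $E$-height $\le 1$ will place an ordinary point on that component.

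The fix is the paper's second ingredient: after further enlarging $L$ one exhibits a \emph{decomposable non-ordinary} crystalline lift $\rho_2$ of the trivial residual representation of Hodge type $0$ --- possible because once $L/\Qp$ has several embeddings one can distribute the Hodge--Tate weights $\{0,1\}$ between the two crystalline characters so that neither summand is unramified --- alongside the ordinary diagonal lift $\rho_1=1\oplus\varepsilon^{-1}$. The two-component statement then forces $\rho|_{G_L}$ to connect to $\rho_1$ or to $\rho_2$, and either way it connects to a direct sum of crystalline characters over $L$ itself, so no auxiliary extension $L'$ splitting an extension class is needed (which also disposes of your worry about tracking components under restriction). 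Note also that you must arrange $\zeta_p\in L$ for the cited connectedness results to apply.
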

\begin{proof}Choose a finite extension $L/K$ such that $L$ contains a
  primitive $p$-th root of unity, and $\rhobar|_{G_L}$ is trivial.
  Then $\rhobar|_{G_L}$ has a decomposable ordinary crystalline lift
  of Hodge type $0$, namely
  $\rho_1:=1\oplus\varepsilon^{-1}$. Extending $L$ if necessary, we
  may also assume that $\rhobar|_{G_L}$ has a decomposable
  non-ordinary crystalline lift of Hodge type $0$, say $\rho_2$ (this is simply
  a direct sum of Lubin--Tate characters). 
By Proposition 2.3 of
  \cite{MR2280776} and Corollary 2.5.16 of \cite{kis04} we see that
  $\rho|_{G_L}$ connects to one of $\rho_1,\rho_2$, and in either case
  $\rho$ is potentially diagonalizable by
  definition.\end{proof}

\begin{lem}
  \label{lem: unramified regular weight implies pot diag} 
Let $a \in (\Z^2_+)^{\Hom(k_v,\F)}_{\SW}$ and $\sigma = \sigma_a$ the corresponding Serre weight. 
\begin{enumerate}
\item If  $\sigma$ is
  not a predicted Serre weight for $\rbar$, then
  $R_{\rbar}^{\square,\sigma}=0$. In particular, it is vacuously the case
  that every crystalline representation $\rho:G_K\to\GL_2(E)$ of Hodge
  type $a$ which lifts $\rhobar$ is potentially diagonalizable.
\item If $K/\Qp$ is unramified and $\sigma$ is regular in the sense of
  Definition \ref{defn:regular weight local} then every crystalline
  representation $\rho:G_K\to\GL_2(E)$ of Hodge type $a$ which lifts
  $\rhobar$ is potentially diagonalizable.
  \end{enumerate}

\end{lem}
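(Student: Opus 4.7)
For part (1), the argument is immediate from the main result of \cite{GLS13}. By Definition \ref{defn: lift of Hodge type a}, the ring $R^{\square,\sigma}_{\rbar}$ is nonzero precisely when $\rbar$ admits a crystalline lift of Hodge type $\sigma$, which is the defining property of $\sigma \in \Wcris(\rbar)$ as recalled in \ref{predicted}. Now \cite{GLS13} establishes the equality $\Wconj(\rbar) = \Wcris(\rbar)$, and $\Wconj(\rbar)$ is by Definition \ref{defn:predicted Serre weight} the set of predicted Serre weights. Hence if $\sigma$ is not a predicted Serre weight, $R^{\square,\sigma}_{\rbar}=0$, so there is no crystalline lift $\rho$ at all, and the potential diagonalizability assertion is vacuous.

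For part (2), my plan is to reduce to the trivial residual representation by an unramified base change and then apply Fontaine--Laffaille theory. Since potential diagonalizability is by definition preserved under enlarging the base field, I can replace $K$ by any convenient finite extension. First I would choose a finite unramified extension $L/K$ (taken large enough that $L$ contains the relevant roots of unity, and such that $\rhobar|_{G_L}$ becomes trivial). Then $L/\Qp$ is still unramified, $\rho|_{G_L}$ is crystalline of Hodge type $a|_L$, and the regularity bound $a_{\varsigma,1}-a_{\varsigma,2}\le p-2$ is preserved on each embedding.

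Because $L/\Qp$ is unramified and the Hodge--Tate weights lie in an interval of length at most $p-1$, Fontaine--Laffaille theory applies and identifies, in a way compatible with deformations, the space of crystalline $G_L$-stable $\O$-lattices in lifts of $\rhobar|_{G_L}=\mathbf{1}\oplus\mathbf{1}$ of Hodge type $a|_L$ with the space of strongly divisible Fontaine--Laffaille modules of the corresponding shape. On the linear-algebra side it is straightforward to write down Fontaine--Laffaille modules which are direct sums of rank one pieces, corresponding to direct sums $\psi_1 \oplus \psi_2$ of crystalline characters with the prescribed Hodge--Tate weights. The goal is then to show that $\rho|_{G_L}$ lies on the same irreducible component of $\Spec R^{\square,a|_L}_{\mathbf{1}\oplus\mathbf{1}}[1/p]$ as such a diagonal lift. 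In the Fontaine--Laffaille regular range (gap at most $p-3$) the deformation ring is formally smooth, hence connected, and this is immediate; in the boundary case (gap exactly $p-2$) one enumerates the components via the Fontaine--Laffaille description and produces a diagonal crystalline representation on each.

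The main obstacle is this last component-by-component verification in the boundary case $a_{\varsigma,1}-a_{\varsigma,2}=p-2$, where the Fontaine--Laffaille deformation ring need not be irreducible and one must check that every component contains a point corresponding to a direct sum of crystalline characters. I plan to handle this by invoking the explicit local analysis already developed in \cite{blggU2} and \cite{GLS12}, possibly after further enlarging $L$ (still unramified) to ensure that enough crystalline characters of the requisite Hodge--Tate weights are available to cover all components.
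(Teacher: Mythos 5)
Part (1) of your proposal is correct and is exactly the paper's argument: the main result of \cite{GLS13} gives $\Wconj(\rbar)=\Wcris(\rbar)$, so a weight that is not predicted admits no crystalline lift of Hodge type $a$, and the assertion is vacuous.

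Part (2) has a genuine gap. The paper's proof is a citation of the main theorem of \cite{GaoLiu12}, which asserts precisely that for $K/\Qp$ unramified, crystalline lifts of Hodge type $a$ with $a_{\varsigma,1}-a_{\varsigma,2}\le p-2$ are potentially diagonalizable; this is a substantial result proved with Kisin-module/$(\varphi,\hat G)$-module techniques, not with Fontaine--Laffaille theory. Your plan cannot reproduce it for two reasons. First, the regularity bound $a_{\varsigma,1}-a_{\varsigma,2}\le p-2$ translates into Hodge--Tate weights spread over an interval of length $p-1$ (recall the Hodge--Tate weights are $a_{\varsigma,1}+1$ and $a_{\varsigma,2}$), which is outside the range where Fontaine--Laffaille theory is an equivalence; that theory requires the filtration length to be at most $p-2$, which is exactly why the paper introduces the strictly stronger notion of \emph{Fontaine--Laffaille regular} ($a_{\varsigma,1}-a_{\varsigma,2}\le p-3$) for the arguments that do use it (cf.\ the proof of Corollary \ref{cor: main result for pot BT}(3)). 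So the boundary case $a_{\varsigma,1}-a_{\varsigma,2}=p-2$, which you correctly identify as the obstacle, is not a boundary case \emph{within} Fontaine--Laffaille theory --- it lies outside it, and deferring it to ``explicit local analysis in \cite{blggU2} and \cite{GLS12}'' is not a proof; it is the entire content of \cite{GaoLiu12}. Second, your reduction step is not available: an \emph{unramified} extension $L/K$ cannot in general trivialize $\rhobar|_{G_L}$ (any ramification in $\rhobar$ survives unramified base change), and the extensions needed to trivialize $\rhobar$ (or to adjoin $\zeta_p$) are ramified, which would destroy the hypothesis that the base field is unramified over $\Qp$ that your Fontaine--Laffaille argument depends on. The correct fix is simply to invoke the main theorem of \cite{GaoLiu12}.
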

\begin{proof}In the case that $\sigma$ is not a predicted Serre weight for
  $\rhobar$, the main result of \cite{GLS13} shows that there are no
  crystalline lifts of $\rhobar$ of Hodge type $a$, and the result
  follows.

 If $K/\Qp$ is unramified and $\sigma$ is regular, then the result follows
  immediately from the main theorem of \cite{GaoLiu12}. 
\end{proof}
We will apply these results by using the following corollary of the
methods of \cite{BLGGT}. We maintain the notations and assumptions of
the rest of this section.
\begin{cor}
  \label{cor: main patching lemma in the potentially diagonalizable
    case}Suppose that $\rbar:G_F\to\GL_2(\F)$ satisfies the 
  assumptions of \S\ref{subsubsec: assumptions on rbar} and \S\ref{subsubsec:assumption on adequate}, and that for each place $v|p$, every lift of
  $\rbar|_{G_{F_{\tv}}}$ of Hodge type $\lambda_v$ and Galois type
  $\tau_v$ is potentially diagonalizable. Then the equivalent
  conditions of Lemma \ref{key patching lemma} hold.
\end{cor}
\begin{proof}We will show that condition (1) of Lemma \ref{key patching lemma}
  holds, i.e. that the support of $M$ meets every
  irreducible component of $\Spec R^{\loc}[1/p]$.  By the
  correspondence between our algebraic automorphic forms and
  automorphic forms on $\GL_2$ (which is explained in detail in
  section 2 of \cite{blggU2}), this is equivalent to the
  statement that, if we make for each place $v|p$ of $F^+$ any choice
  of component $\Spec R_\tv$ of $\Spec R_\tv^{\square,\lambda_v,\tau_v}[1/p]$,
  there is a continuous lift $r:G_F\to\GL_2(\Qpbar)$ of $\rbar$ such
  that
   \begin{itemize}
   \item $r^c\equiv r^\vee\varepsilon^{-1}$,
   \item $r$ is unramified at all places not dividing $p$ (note that this will
     be automatic at the places in $R$ by Lemma~\ref{rem: formally smooth def rings}),
   \item for each place $v|p$ of $F^+$, $r|_{G_{F_\tv}}$ corresponds
     to a point of $R_\tv$, and
   \item $r$ is automorphic in the sense of \cite{BLGGT}.
   \end{itemize}

 By Lemma 3.1.1 of
   \cite{blggU2}, we may choose a solvable extension $F_1/F$ of CM
   fields such that 
 \begin{itemize}
 \item $F_1$ is linearly disjoint from
   $\bar{F}^{\ker\ad\rbar}(\zeta_p)$ over $F$.
 \item there is a continuous lift $r':G_{F_1}\to\GL_2(\Qpbar)$ of
   $\rbar|_{G_{F_1}}$ such that $r'$ is automorphic, and for each place
   $w|p$ of $F_1$, $r'|_{G_{F_{1,w}}}$ is (potentially) diagonalizable.
 \end{itemize} 

  The existence of $r$ now follows by applying Theorem A.4.1 of
  \cite{blggU2} (which is another variant of the Khare--Wintenberger argument;
  again, cf.\ Thm.\ 3.3 of~\cite{kw}) with the representation $r_{l,\imath}(\pi')$ in the statement of 
{\it loc.~cit} equal to $r'.$ It is here that we use the assumption that every lift of
  $\rbar|_{G_{F_{\tv}}}$ of Hodge type $\lambda_v$ and Galois type
  $\tau_v$ is potentially diagonalizable, as we need to know that the
  points of $\Spec R_\tv$ correspond to potentially diagonalizable
  lifts in order to satisfy the hypotheses of Theorem A.4.1. of {\it loc.~cit.}
 (note that since $r|_{G_{F_1}}$ is automorphic and
 the extension $F_1/F$ is solvable, $r$ is automorphic by Lemma 1.4 of
 \cite{blght}).
\end{proof}

\subsection{Local results}\label{subsec: main local results}We will
now combine Corollary \ref{cor: main patching lemma in the potentially diagonalizable
    case} with the local-to-global results of Appendix \ref{sec:local
    to global} to prove our main local results. We begin with a lemma
  from linear algebra, for which we need to establish some
  notation. 
Given a vector space $V$ over $\Q$ with a choice of basis, we let
  $V_{\ge 0}$ denote the cone spanned by nonnegative linear
  combinations of the basis elements. If $V,W$ are vector spaces over $\Q$ with
  choices of bases, then we will choose the corresponding tensor basis
  for $V\otimes W$, and define $(V\otimes W)_{\ge 0}$ accordingly. For
  any set $I$, we
  set $\Z^I_{\ge 0} = \Z^I\cap\Q^I_{\ge 0}.$ In particular,  
$(\Z^m)^{\otimes n}_{\ge 0} = (\Z^m)^{\otimes n}\cap (\Q^m)^{\otimes n}_{\ge 0}.$

  \begin{lem}
    \label{lem: linear algebra lemma}Let $k$ be a field.
    \begin{enumerate}
    \item If for $i=1,\dots,n$ we have injective linear maps
      $\alpha_i:V_i\into W_i$ between $k$-vector spaces, then
      $\alpha_1\otimes\cdots\otimes\alpha_n:V_1\otimes\cdots\otimes
      V_n\to W_1\otimes\cdots\otimes W_n$ is also injective.
    \item If for $i=1,\dots,n$ we have linear maps $\alpha_i:V_i\to
      W_i$ between $k$-vector spaces and nonzero elements $w_i\in W_i$
      such that \[w_1\otimes\cdots\otimes
      w_n\in\Im(\alpha_1\otimes\cdots\otimes\alpha_n),\] then for each
      $i$, $w_i\in\Im(\alpha_i)$.
    \item Let $I$ be a (possibly infinite) set and
      $\alpha:\Z_{\ge 0}^m\to\Z_{\ge 0}^I$ a map that extends to an
      injective linear map $\alpha:\Q^m\to\Q^I$. Suppose that
      $v\in\Q^m$,  with $\alpha(v)\in\Z_{\ge 0}^I$ and that for some $n\ge 1$, $v$ satisfies
      $v^{\otimes n}\in(\Z^m)^{\otimes n}_{\ge 0}.$  
      Then $v\in\Z_{\ge 0}^m$.
    \end{enumerate}

  \end{lem}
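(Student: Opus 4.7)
The three parts build on each other, so I would address them in order.

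Part (1) is the standard fact that tensor products of injections over a field are injective. My approach is to factor $\alpha_1\otimes\cdots\otimes\alpha_n$ as the composition of $n$ maps, each of which tensors one new $\alpha_i$ into place; since vector spaces over $k$ are flat (indeed free), each single factor $\mathrm{id}\otimes\cdots\otimes\alpha_i\otimes\cdots\otimes\mathrm{id}$ preserves injectivity. Alternatively, choose complements $W_i = \alpha_i(V_i)\oplus W'_i$ and observe that $\otimes_i W_i$ decomposes as a direct sum of tensor products over subsets $S\subseteq\{1,\dots,n\}$, with $\otimes_i\alpha_i(V_i)$ sitting as the summand indexed by $S=\{1,\dots,n\}$.

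For part (2), I would argue by pushing to the quotient coordinate by coordinate. Let $\pi_i\colon W_i\to W_i/\mathrm{Im}(\alpha_i)$ be the projection. Then $(\mathrm{id}\otimes\cdots\otimes\pi_i\otimes\cdots\otimes\mathrm{id})\circ(\alpha_1\otimes\cdots\otimes\alpha_n)=0$ since $\pi_i\circ\alpha_i=0$. Applied to any preimage of $w_1\otimes\cdots\otimes w_n$, this yields $w_1\otimes\cdots\otimes\pi_i(w_i)\otimes\cdots\otimes w_n=0$ inside $W_1\otimes\cdots\otimes(W_i/\mathrm{Im}(\alpha_i))\otimes\cdots\otimes W_n$. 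Since the other $w_j$ are nonzero, part (1) (applied to the inclusions $k\cdot w_j\hookrightarrow W_j$) forces $\pi_i(w_i)=0$, i.e.\ $w_i\in\mathrm{Im}(\alpha_i)$.

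Part (3) is the substantive step, and it has two ingredients: a sign argument and a rationality argument. The case $n=1$ is the hypothesis, so assume $n\geq 2$. The coordinates of $v^{\otimes n}$ are exactly the products $v_{i_1}\cdots v_{i_n}$, all assumed to lie in $\Z_{\geq 0}$. Specializing to $n-1$ copies of $i$ and one copy of $j$ gives $v_i^{n-1}v_j\geq 0$; if some $v_i>0$ and some $v_j<0$ this is violated. Hence either every $v_j\geq 0$ or every $v_j\leq 0$. In the latter case I would use the hypothesis that $\alpha$ sends $\Z^m_{\geq 0}\to\Z^I_{\geq 0}$: the standard basis vectors map to elements with non-negative integer coordinates, so the matrix of $\alpha$ has non-negative integer entries. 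Then $\alpha(-v)$ has non-negative rational entries while $-\alpha(v)$ has non-positive integer entries, forcing $\alpha(v)=0$, and by the injectivity hypothesis $v=0\in\Z^m_{\geq 0}$. In the former case, $v_j\in\Q_{\geq 0}$ and $v_j^n=(v^{\otimes n})_{j,\ldots,j}\in\Z_{\geq 0}$; writing $v_j=a/b$ in lowest terms with $b\geq 1$, $v_j^n=a^n/b^n$ with $\gcd(a^n,b^n)=1$, so integrality of $v_j^n$ forces $b=1$ and $v_j\in\Z_{\geq 0}$.

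The only step that requires genuine care is the sign dichotomy in (3), together with the observation that $\alpha$ has non-negative integer entries (which is why the ``all coordinates $\leq 0$'' branch collapses via injectivity); everything else is either a standard tensor-product manipulation or elementary number theory. I do not anticipate any real obstacle.
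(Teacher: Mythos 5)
Your proof is correct and follows essentially the same route as the paper: part (1) by factoring the tensor product of maps, part (3) by the same sign dichotomy on the coordinates $v_{i_1}\cdots v_{i_n}$ followed by the observation that $\alpha(v)$ and $-\alpha(v)$ both non-negative forces $v=0$ in the all-non-positive branch (your explicit $v_j^n\in\Z_{\ge 0}$ integrality step usefully fills in what the paper dismisses as ``one sees easily''). The only cosmetic difference is in part (2), where you contract the unaffected factors by passing to the quotients $W_i/\Im(\alpha_i)$ rather than by pairing against functionals $\varphi_j\in W_j^*$ with $\varphi_j(w_j)=1$ as the paper does; the two are dual formulations of the same argument and both are complete.
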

  \begin{proof}
  (1) By induction, it suffices to treat the case $n=2$. Then we
      can factor $\alpha_1\otimes \alpha_2$ as the
      composite \[V_1\otimes V_2\to W_1\otimes V_2\to W_1\otimes W_2\]
      of two maps which are each injective. 

    (2) For each $j\ne i$, choose $\varphi_j\in W_j^*$ with
      $\varphi_j(w_j)=1$. Identifying $V_j$ and $W_j$ with
      $k\otimes\cdots\otimes k\otimes V_j\otimes\cdots\otimes k$ and
      $k\otimes\cdots\otimes k\otimes W_j\otimes\cdots\otimes k$
      respectively, we see that if \[w_1\otimes\cdots\otimes
      w_n=(\alpha_1\otimes\cdots\otimes\alpha_n)(\vec{v}),\]then \[w_i=\alpha_i((\varphi_1\alpha_1\otimes\cdots\otimes\varphi_{i-1}\alpha_{i-1}\otimes
      1\otimes\varphi_{i+1}\alpha_{i+1}\otimes\cdots\otimes\varphi_n\alpha_n)(\vec{v})),\]as
      required.
 
(3)  
By explicitly examining the entries of $v^{\otimes n}$ in the standard basis, one sees easily that 
$v^{\otimes n}\in (\Z^m)^{\otimes n}_{\ge 0}$ implies that either $v$ or $-v$ is in $\Z^m_{\ge 0}.$ 
If $-v \in \Z^m_{\ge 0}$ then $\alpha(v), -\alpha(v) \in \Z^I_{\ge 0},$ which implies $\alpha(v) = 0,$ 
and $v=0$ as $\alpha$ is injective. 

  \end{proof}

In order to apply this result, we will make use of the following
lemma. The last assertion (allowing the determinant of $\tau$ to run over
tame characters)
will be used in Section \ref{sec:BDJ}.
\begin{lem}\label{lem: pot BT equations are well determined}
  Let $p$ be a prime, let $K/\Qp$ be a finite extension with residue
  field $k$, and let $\rbar:G_K\to\GL_2(\F)$ be a continuous
  representation. Then the system of
  equations \[e(R_\rbar^{\square,0,\tau}/\pi)=\sum_{\sigma}n_{0,\tau}(\sigma)\mu_\sigma(\rbar),\] 
in the unknowns $\mu_\sigma(\rbar)$ has at most one
solution. Equivalently,  
the linear map which sends  $(\mu_{\sigma})_{\sigma}$ to
  $(\sum_{\sigma}n_{0,\tau}(\sigma)\mu_\sigma)_{0,\tau}$
  is injective. 
In fact, this is true even if we restrict to the set of types $\tau$
for which $\det\tau$ is tame.
\end{lem}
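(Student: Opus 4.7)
The plan is to reduce this to a purely representation-theoretic question about $\GL_2(k)$. The map in question is injective precisely when the virtual mod-$p$ representations $[(L_{0,\tau}\otimes_{\cO}\F)^{ss}]$, as $\tau$ ranges over inertial types with $\det\tau$ tame, span $R_{\Fpbar}(\GL_2(k))\otimes_\Z\Q$, the finite-dimensional $\Q$-vector space with basis the Serre weights of $\GL_2(k)$. The entire content of the lemma is therefore this spanning statement, and I will establish it by restricting attention to tame inertial types $\tau$, which automatically have tame determinant.

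For such $\tau$, the representation $\sigma(\tau)$ factors through $\GL_2(k)$, and by Henniart's explicit description of the types in the tame case, $\sigma(\tau)$ is either a principal series $\Ind_{B(k)}^{\GL_2(k)}(\chi_1\boxtimes\chi_2)$ (from niveau-$1$ tame types $\chi_1\oplus\chi_2$ with $\chi_1\ne\chi_2$), a character $\chi\circ\det$ (from scalar niveau-$1$ types $\chi\oplus\chi$), or an irreducible cuspidal representation of $\GL_2(k)$ (from niveau-$2$ tame types). A direct count shows that the number of such tame types is $|k|(|k|-1)$, matching the number of Serre weights, so the matrix $M=(n_{0,\tau}(\sigma))$ is square.

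I would then prove that $M$ is invertible by arranging it to be triangular with nonzero diagonal. The essential input is the explicit Jordan--H\"older decomposition of $(L_{0,\tau}\otimes_\cO\F)^{ss}$ for tame $\tau$, due (in the case $K=\Qp$) to Diamond and generalized to arbitrary residue field by Breuil--Paskunas. Ordering Serre weights $\sigma_a$ by their ``shape'' $(a_{\varsigma,1}-a_{\varsigma,2})_{\varsigma\in\Hom(k,\F)}$ with the componentwise partial order, one associates to each Serre weight $\sigma_0$ a tame principal-series type $\tau(\sigma_0)$ so that $\sigma_0$ appears with multiplicity one as the largest-shape Jordan--H\"older factor of $(L_{0,\tau(\sigma_0)}\otimes_\cO\F)^{ss}$, with all remaining constituents having strictly smaller shape. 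This produces a triangular matrix with nonzero diagonal, hence invertibility.

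The main obstacle is handling boundary Serre weights --- those with some shape component equal to $0$ or $p-1$ --- where the generic principal-series recipe degenerates and one must carefully incorporate cuspidal types to close off the triangular system. A more conceptual alternative proceeds via Brauer characters: the decomposition map $R_{\Qpbar}(\GL_2(k))\otimes\Q\to R_{\Fpbar}(\GL_2(k))\otimes\Q$ is surjective by standard modular representation theory, and the identity $[\chi\otimes\St]=[\Ind_{B(k)}^{\GL_2(k)}(\chi\boxtimes\chi)]-[\chi\circ\det]$ in the Grothendieck ring of $\GL_2(k)$ shows that the contribution of the Steinberg twists (the only irreducible $\Qpbar$-representations of $\GL_2(k)$ not arising as some $\sigma(\tau)$ for $\tau$ tame) to the spanning can be recovered from the tame principal-series and one-dimensional types by an explicit Brauer character computation on $p$-regular classes.
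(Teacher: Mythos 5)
Your reduction to a spanning statement in $R_{\Fpbar}(\GL_2(k))\otimes\Q$ is exactly the right (and the paper's) framework, but the core of your argument fails: the reductions of the \emph{tame} types do not span. The Serre weights $\sigma_{p-1,\dots,p-1}\otimes(\chi\circ\det)$ have dimension $|k|$, whereas the cuspidal types $\Theta(\xi)$ have dimension $|k|-1$ and the scalar niveau-one types are one-dimensional, so neither can contain such a weight as a Jordan--H\"older factor; and the explicit Diamond/Breuil--Paskunas decomposition of $\overline{I(\chi_1,\chi_2)}$ for $\chi_1\ne\chi_2$ likewise never produces it. Hence the corresponding $|k|-1$ columns of your square matrix $M$ are identically zero, $M$ is not invertible, and no reordering or ``careful incorporation of cuspidal types'' can make the triangular argument close up at the top weight. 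This is precisely why the lemma is stated for types with tame \emph{determinant} rather than for tame types.

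Your ``conceptual alternative'' does not repair this, because the identity $[\St_\chi]=[\Ind_{B(k)}^{\GL_2(k)}(\chi\boxtimes\chi)]-[\chi\circ\det]$ only trades $\St_\chi$ for the reducible induction $I(\chi,\chi)$, which is not $\sigma(\tau)$ for any $\tau$; you still owe an expression for $d[I(\chi,\chi)]$ in terms of actual types. The paper supplies exactly this missing step: taking $\psi$ a \emph{ramified} character of $\cO_K^\times$ with trivial reduction (necessarily wildly ramified, since no nontrivial tame character has trivial reduction), the types $\chi\psi\oplus\chi\psi^{-1}$, $\chi\psi\omegat\oplus\chi\psi^{-1}\omegat^{-1}$, etc.\ have tame determinant but are not tame, and the alternating sum
\[\sigma(\chi\psi\oplus\chi\psi^{-1})-\sigma(\chi\psi\omegat\oplus\chi\psi^{-1}\omegat^{-1})+\sigma(\chi\omegat\oplus\chi\omegat^{-1})-\sigma(\chi\oplus\chi)\]
reduces to $\chibar\circ\det\otimes\sigma_{p-1,\dots,p-1}$ by Proposition 4.2 of \cite{breuildiamond}. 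Everything else in your outline (surjectivity of the decomposition map via Serre's Theorem 33, the classification of irreducibles of $\GL_2(k)$, the observation that the Steinberg twists are the only irreducibles not of the form $\sigma(\tau)$ for $\tau$ tame) is correct and matches the paper; the single missing ingredient is the use of these wildly ramified, tame-determinant types to capture the Steinberg constituents.
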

\begin{proof}
  If $L$ is a topological field, and $G$ is a topological group, let $R_L(G)$ denote the
 Grothendieck group of continuous $L$-representations of $G$. 

The composite of reduction
  mod $p$ and semisimplification yields a homomorphism
  $\pi:R_E(\GL_2(\cO_K))\to R_{\F}(\GL_2(k))$. Since by definition
  this homomorphism takes $\sigma(\tau)=W_0\otimes\sigma(\tau)$ to
  $\sum n_{0,\tau}(\sigma)\sigma$, it is enough to check that the
  $\pi(\sigma(\tau))$ span $R_\F(\GL_2(k))$. The surjection
  $\GL_2(\cO_K)\onto\GL_2(k)$ gives an injection $R_E(\GL_2(k))\into
  R_E(\GL_2(\cO_K))$, and by Theorem 33 of Chapter 16 of
  \cite{MR0450380}, the homomorphism $R_E(\GL_2(k))\to
  R_{\F}(\GL_2(k))$ is surjective; so $\pi$ is certainly surjective.

  We recall the explicit classification of irreducible
  $E$-representations of $\GL_2(k)$, cf.\ Section 1 of
  \cite{MR2392355}. There are the one-dimensional representations
  $\chi\circ\det$, the twists $\St_\chi$ of the Steinberg
  representation, the principal series representations
  $I(\chi_1,\chi_2)$, and the cuspidal representations
  $\Theta(\xi)$. By the explicit construction in Henniart's appendix
  to \cite{breuil-mezard}, all but the representations $\St_\chi$ occur as a
$\sigma(\tau)$ for some tame type $\tau$ (the principal series
representations occur for tamely ramified types of niveau one, and the
cuspidal types for tamely ramified types of niveau two), so in order to complete
  the proof, it is enough to check that the $\pi(\St_\chi)$ are in the
  span of the $\pi(\sigma(\tau))$, where $\tau$ runs over the
  representations with tame determinant. 

  To see this, note that the reduction mod $p$ of $\St_\chi$ is just
  the irreducible representation $\chibar\circ\det\otimes
  \sigma_{p-1,\dots,p-1}$ of $\GL_2(k)$. Let $\psi:\cO_K^\times\to
  E^\times$ be a non-quadratic ramified character with trivial reduction, let
$\omegat$ denote the Teichm\"uller lift of $\omega$, and
  consider the element
  $$\sigma_\chi:=\sigma(\chi\psi\oplus\chi\psi^{-1})-\sigma(\chi\psi\omegat\oplus\chi\psi^{-1}\omegat^{-1})+\sigma(\chi\omegat\oplus\chi\omegat^{-1})-\sigma(\chi\oplus\chi)$$
  of $R_E(\GL_2(\cO_K))$. By
  Proposition 4.2 of \cite{breuildiamond}, 
  $\pi(\sigma_{\chi^{-1}})=\chibar\circ\det\otimes \sigma_{p-1,\dots,p-1}$, as
\item   required. 
\end{proof}

\begin{cor}  \label{remark: all multiplicities are 0 unless det condition holds}
Suppose a solution to the equations in Lemma \ref{lem: pot BT equations are well determined} exists.  
\begin{itemize}
\item If $\sigma|_{k^\times} \neq (\varepsilonbar\det\rbar)^{-1}\circ\Art_K,$ then $\mu_{\sigma}(\rbar) = 0.$ 
\item The $\mu_{\sigma}(\rbar)$ for which $\sigma|_{k^\times} = (\varepsilonbar\det\rbar)^{-1}\circ\Art_K$ 
are determined by the equations corresponding to the types $\tau$ with determinant
$\widetilde{\varepsilonbar\det\rbar}$.  
\end{itemize}
\end{cor}
\begin{proof} 
By 
Lemma \ref{lem: pot BT equations are well determined}, we certainly
need only consider the equations with tame determinant. Now, if
$\det\tau$ does not lift $\varepsilonbar\det\rbar$, then certainly
$R_\rbar^{\square,0,\tau}=0$. It is also easy to check that
$n_{0,\tau}(\sigma)=0$ unless the central character of $\sigma$ is
$(\det\overline{\tau})^{-1}\circ\Art_K$, so that if we set
$\mu_\sigma(\rbar)=0$ when the central character of $\sigma$ is not
equal to $(\varepsilonbar\det\rbar)^{-1}\circ\Art_K$, then all the
equations for types $\tau$ with
$\det\overline{\tau}\ne\varepsilonbar\det\rbar$ will automatically be
satisfied.  

Furthermore, none of the equations with
$\det\tau=\widetilde{\varepsilonbar\det\rbar}$ involve any of these
values of $\mu_\sigma(\rbar)$, so since the equations have a unique
solution by Lemma \ref{lem: pot BT equations are well determined}, it
follows that we must have $\mu_\sigma(\rbar)=0$ if the central
character of $\sigma$ is not equal to
$(\varepsilonbar\det\rbar)^{-1}\circ\Art_K$, and that the remaining
values of $\mu_\sigma (\rbar)$ are determined by the $\tau$ with
$\det\tau=\widetilde{\varepsilonbar\det\rbar}$. Note that if $p>2$, then it
follows from twisting that if the equations hold for all $\tau$ with
$\det\tau=\widetilde{\varepsilonbar\det\rbar}$, then in fact they hold for all
$\tau$ with $\det\overline{\tau}=\varepsilonbar\det\rbar$.
\end{proof}
\begin{rem}
  \label{rem: ref remark about central characters}A referee has pointed out
  that in fact the obvious variant of Lemma~\ref{lem: pot BT equations are well
    determined} where the types and weights have fixed central character may be
  proved via Propositions~4.2 and~4.3 of~\cite{breuildiamond} (or, in the case
  $p>2$, it can be deduced directly from the previous remark by twisting).
\end{rem}
  We may now combine Corollary \ref{cor: main patching lemma in the
    potentially diagonalizable case} with Lemmas \ref{lem: linear
    algebra lemma} and \ref{lem: pot BT equations are well determined}
  and Corollary \ref{cor: the final local-to-global
    result} to prove our main local result for potentially
  diagonalizable representations.
  \begin{thm}
    \label{thm: local BM for pot diag}Let $p>2$ be prime, let $K/\Qp$
    be a finite extension with residue field $k$, and let
    $\rbar:G_K\to\GL_2(\F)$ be a continuous representation. Then there
    are uniquely determined non-negative integers $\mu_\sigma(\rbar)$,
    $\sigma$ running over local Serre weights, with the following property: for any pair
    $(\lambda,\tau)$ with the property that every potentially
    crystalline lift of $\rbar$ of Hodge type $\lambda$ and Galois
    type $\tau$ is potentially
    diagonalizable, 
\[e(R_\rbar^{\square,\lambda,\tau}/\pi)=\sum_{\sigma}n_{\lambda,\tau}(\sigma)\mu_\sigma(\rbar).\]
  \end{thm}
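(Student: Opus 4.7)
Uniqueness will follow immediately from Lemma \ref{lem: pot BT equations are well determined}: every potentially Barsotti--Tate pair $(0,\tau)$ satisfies the potential diagonalizability hypothesis by Lemma \ref{lem: pot BT implies pot diag}, so the equations coming from those pairs alone already pin down any candidate $\mu_\sigma(\rbar)$. So the real work is existence.

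The plan for existence is to globalize $\rbar$ and then run the patching machinery. Using Corollary \ref{cor: the final local-to-global result}, I would produce a CM extension $F/F^+$ satisfying the hypotheses of Section \ref{subsec: Hecke algebras}, along with an automorphic $\rhobar:G_{F^+}\to\cG_2(\F)$ for which $\rhobar|_{G_{F_{\tv}}}\cong\rbar$ at \emph{every} place $v\in S_p$ of $F^+$. Write $n:=|S_p|$. For a given pair $(\lambda,\tau)$ satisfying our hypothesis, I would set $(\lambda_v,\tau_v)=(\lambda,\tau)$ at every $v\in S_p$; then the pot diag hypothesis holds at every place, so Corollary \ref{cor: main patching lemma in the potentially diagonalizable case} yields condition (4) of Lemma \ref{key patching lemma}. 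Combining this with the formal smoothness of the deformation rings at $v\in R$ (Remark \ref{rem: formally smooth def rings}) and the factorization $L_{\lambda,\tau}\otimes\F=\bigotimes_{v\in S_p}L_{\lambda_v,\tau_v}\otimes\F$ leads to
\[
e(R_\rbar^{\square,\lambda,\tau}/\pi)^{n}\;=\;\sum_{\vec\sigma}\prod_{v\in S_p}n_{\lambda,\tau}(\sigma_v)\,\mu'_{\vec\sigma}(\rhobar),
\]
where $\vec\sigma=(\sigma_v)_{v\in S_p}$ runs over $n$-tuples of local Serre weights.

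The endgame will be linear algebra. Let $I$ denote the set of pairs $(\lambda,\tau)$ satisfying the pot diag hypothesis (which contains all pot BT pairs), and define $\alpha:\Q^{\{\text{Serre wts}\}}\to\Q^{I}$ by $\mu\mapsto\bigl(\sum_{\sigma}n_{\lambda,\tau}(\sigma)\mu_{\sigma}\bigr)_{(\lambda,\tau)\in I}$. Lemma \ref{lem: pot BT equations are well determined} gives injectivity of $\alpha$. Writing $x=(e(R_\rbar^{\square,\lambda,\tau}/\pi))_{(\lambda,\tau)\in I}\in\Z_{\ge 0}^{I}$ (which is nonzero since $\rbar$ has at least one pot BT lift) and $\vec y=(\mu'_{\vec\sigma}(\rhobar))_{\vec\sigma}$, the family of global equations displayed above collapses into the single tensor identity $x^{\otimes n}=\alpha^{\otimes n}(\vec y)$. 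Lemma \ref{lem: linear algebra lemma}(2) then produces $\mu^{\Q}\in\Q^{\{\text{Serre wts}\}}$ with $\alpha(\mu^{\Q})=x$; the injectivity of $\alpha^{\otimes n}$ from Lemma \ref{lem: linear algebra lemma}(1) forces $\vec y=(\mu^{\Q})^{\otimes n}$, whose entries are non-negative integers by Lemma \ref{key patching lemma}. Finally, Lemma \ref{lem: linear algebra lemma}(3) upgrades $\mu^{\Q}$ to $\Z_{\ge 0}^{\{\text{Serre wts}\}}$, and setting $\mu_\sigma(\rbar):=\mu^{\Q}_\sigma$ completes the construction.

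The hard part will be the opening globalization step: producing a single automorphic $\rhobar$ whose restriction at \emph{every} place above $p$ is $\rbar$, while simultaneously arranging all the hypotheses needed for patching (in particular the adequate image condition on $\rhobar(G_{F(\zeta_p)})$ and the tame-ramification conditions at the auxiliary place $v_1$). This is exactly the content packaged into Corollary \ref{cor: the final local-to-global result} via the potential automorphy techniques of Appendix \ref{sec:local to global}. Without the ability to arrange $\bar r_v\cong\rbar$ \emph{simultaneously} at every $v\in S_p$, the left-hand side of the global equation would become a mixed tensor product of distinct vectors $x_v$ rather than the pure tensor power $x^{\otimes n}$, and Lemma \ref{lem: linear algebra lemma}(3) would no longer suffice to extract integrality and non-negativity of the local multiplicities from the global ones.
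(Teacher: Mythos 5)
Your overall strategy is exactly the paper's: globalize via Corollary \ref{cor: the final local-to-global result}, apply Corollary \ref{cor: main patching lemma in the potentially diagonalizable case} to get condition (4) of Lemma \ref{key patching lemma}, and then extract the local multiplicities by the linear-algebra argument of Lemma \ref{lem: linear algebra lemma}. Two points, one of which is a genuine gap in the endgame as you have written it.

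First, the minor point: Corollary \ref{cor: the final local-to-global result} only produces $\rbar|_{G_{F_{\tv}}}$ isomorphic to an \emph{unramified twist} of the given local representation, not the representation itself; you need Lemma \ref{lem: crystalline def ring unchanged by unramified twist} to see that this does not change the local deformation rings, so that $e(R_\infty/\pi R_\infty)=\prod_{v|p}e(R_\rbar^{\square,\lambda_v,\tau_v}/\pi)$ still holds.

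Second, and more seriously: you only run the patching argument with the \emph{same} pair $(\lambda,\tau)$ placed at every $v\in S_p$. This yields, for each $i=(\lambda,\tau)\in I$, the single scalar equation $x_i^{\,n}=\sum_{\vec\sigma}\prod_v n_i(\sigma_v)\mu'_{\vec\sigma}$, i.e.\ only the \emph{diagonal} coordinates of the asserted identity $x^{\otimes n}=\alpha^{\otimes n}(\vec y)$ in $(\Q^I)^{\otimes n}$. The off-diagonal coordinates, indexed by tuples $(i_1,\dots,i_n)\in I^n$ with the $i_j$ distinct, are not consequences of the diagonal ones, and Lemma \ref{lem: linear algebra lemma}(2) genuinely requires membership of the full tensor $x^{\otimes n}$ in the image of $\alpha^{\otimes n}$ — knowing only the diagonal of $\alpha^{\otimes n}(\vec y)$ does not determine $\vec y$ even though $\alpha^{\otimes n}$ is injective. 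The fix is to let the pairs $(\lambda_v,\tau_v)$ vary \emph{independently} over $I$ at the different places $v|p$ (the potential diagonalizability hypothesis is checked place by place, so Corollary \ref{cor: main patching lemma in the potentially diagonalizable case} still applies, and the $\mu'_{\vec\sigma}$ are independent of the choice of $(\lambda,\tau)$ by the diagonalization step built into the patching construction); this is what the paper does, as signalled by the subscripts in its displayed equation $\prod_{v|p}e(R_\rbar^{\square,\lambda_v,\tau_v}/\pi)=\sum_{\{\sigma_v\}}(\prod_{v|p}n_{\lambda_v,\tau_v}(\sigma_v))\mu'_{\sigma}(\rbar)$. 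With that modification the rest of your argument (injectivity of $\alpha$ from the potentially Barsotti--Tate equations, then parts (2), (1), (3) of Lemma \ref{lem: linear algebra lemma} in that order) goes through verbatim.
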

  \begin{proof}
    By Corollary \ref{cor: the final local-to-global result} and
    Corollary \ref{cor: main patching lemma in the potentially
      diagonalizable case}, we see that all of the equivalent
    conditions of Lemma \ref{key patching lemma} hold in a case where
    each $F_\tv\cong K$ for each prime $v|p$ of $F,$ and each $\rbar|_{G_{F_\tv}}$ is an unramified
    twist of a representation isomorphic to our local $\rbar$. In
    particular, condition (4) of Lemma \ref{key patching lemma}
    holds. In this case, by Lemma \ref{lem: crystalline def ring
      unchanged by unramified twist} we see that 
\[e(R_\infty/\pi    R_\infty)=\prod_{v|p}e(R_\rbar^{\square,\lambda_v,\tau_v}/\pi)
= \sum_{\{\sigma_v\}_{v|p}} (\prod_{v|p} n_{\lambda_v,\tau_v}(\sigma_v))\mu'_{\sigma_{\glo}}(\rbar)
,\]
where in the final expression the sum runs over tuples $\{\sigma_v\}_{v|p}$ of equivalence classes of 
Serre weights, and $\sigma_{\glo} \cong \otimes_{v|p} \sigma_v.$

Choose an ordering of the set of equivalence classes of (local) Serre weights, 
and denote its cardinality by $m.$ Let $I$ be the set of pairs $(\lambda,\tau)$ 
in the statement of the theorem. We will apply Lemma \ref{lem: linear algebra lemma} 
to the map $\alpha: \Q^m \rightarrow \Q^I$ which sends $(\mu_{\sigma})_{\sigma}$ 
to $(\sum_{\sigma}n_{\lambda,\tau}(\sigma)\mu_\sigma)_{\lambda,\tau}.$ 
Note that all the pairs $(0,\tau)$ are in $I$ by Lemma \ref{lem: pot BT implies pot diag}, so that 
$\alpha$ is an injective map by Lemma \ref{lem: pot BT  equations are well determined}, 
and induces a map $\Z^m_{\geq 0} \rightarrow \Z^I_{\geq 0}$ since the $n_{\lambda,\tau}(\sigma)$ 
are non-negative integers.

Let $w = (e(R_\rbar^{\square,\lambda,\tau}/\pi))_{\lambda,\tau} \in \Z^I_{\geq 0},$
and take $n$ to be the number of primes of $F^+$ lying over $p.$ 
By what we saw above, there exists $v_n \in (\Z^m)^{\otimes n}_{\geq 0}$ 
such that $\alpha^{\otimes n}(v_n) = w^{\otimes n}.$ 
Hence $w = \alpha(v)$ for some $v \in \Q^m$ by  Lemma 
\ref{lem: linear algebra lemma}(2). 
As $\alpha^{\otimes n}$ is injective  Lemma \ref{lem: linear algebra lemma}(1)
implies that
we must have $v_n = v^{\otimes n},$ so that $v \in \Z^m_{\geq 0}$ by 
 Lemma \ref{lem: linear algebra lemma}(3). Defining the
 $\mu_\sigma(\rbar)$ by  $v = (\mu_{\sigma}(\rbar))_{\sigma},$ 
the theorem follows.
\end{proof}

\begin{cor}\label{cor: main result for pot BT}
   (Theorem \ref{thm: intro version of main abstract result})
    Let $p>2$ be prime, let $K/\Qp$ be a finite extension with residue
    field $k$, and let $\rbar:G_K\to\GL_2(\F)$ be a continuous
    representation. Then there are uniquely determined non-negative
    integers $\mu_\sigma(\rbar)$ such that for all inertial types
    $\tau$, we
    have \[e(R_\rbar^{\square,0,\tau}/\pi)=\sum_{\sigma}n_{0,\tau}(\sigma)\mu_\sigma(\rbar).\]
 Furthermore, the $\mu_\sigma(\rbar)$ enjoy the following properties.
\begin{enumerate}
  \item 
    $\mu_\sigma(\rbar)\ne 0$ if and only if $\sigma$ is a predicted Serre weight
    for $\rbar$.  
  \item If $K/\Qp$ is unramified and $\sigma$ is regular
    , then
    $\mu_\sigma(\rbar)=e(R_\rbar^{\square,\sigma}/\pi)$, where
    $R_\rbar^{\square,\sigma}$ is the crystalline lifting ring of Hodge type
    determined by $\sigma$. If
    furthermore $\sigma$ is Fontaine--Laffaille regular
    , then
    $\mu_\sigma(\rbar)=1$ if $\sigma$ is a predicted Serre weight for
    $\rbar$, and is $0$ otherwise.
  \end{enumerate}
\end{cor}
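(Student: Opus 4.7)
\textbf{Proof proposal for Corollary \ref{cor: main result for pot BT}.}

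The existence of the $\mu_\sigma(\rbar)$ satisfying the main equation, and their uniqueness, is immediate from Theorem \ref{thm: local BM for pot diag} and Lemma \ref{lem: pot BT implies pot diag}: any potentially Barsotti--Tate lift is potentially diagonalizable, so the hypothesis of Theorem \ref{thm: local BM for pot diag} is met for every pair $(0,\tau)$, and uniqueness was established in Lemma \ref{lem: pot BT equations are well determined}.

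For property (1), the plan is to extract the $\mu_\sigma(\rbar)$ directly from the patching construction. Recall that the proof of Theorem \ref{thm: local BM for pot diag} realized $(\mu_\sigma(\rbar))_\sigma$ as the unique $v\in\Z_{\ge 0}^m$ with $v^{\otimes n}$ equal to the vector $(\mu'_{\otimes_v \sigma_v}(\rbar_{\mathrm{glob}}))$ arising from a global $\rbar_{\mathrm{glob}}$ built so that each $\rbar_{\mathrm{glob}}|_{G_{F_{\tv}}}$ is an unramified twist of $\rbar$. Thus $\mu_{\sigma}(\rbar)\ne 0$ implies $\mu'_{\sigma^{\boxtimes}}(\rbar_{\mathrm{glob}})\ne 0$ for the tuple with every component equal to $\sigma$, and by Lemma \ref{lem: multiplicity positive implies Serre weight and thus crystalline lift} the representation $\rbar_{\mathrm{glob}}|_{G_{F_\tv}}$ has a crystalline lift of Hodge type $\sigma$. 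Applying Lemma \ref{lem: crystalline def ring unchanged by unramified twist} to untwist gives such a lift for $\rbar$ itself, proving (1).

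For property (2), one direction is simply (1) combined with the main theorem of \cite{GLS13}, which identifies $\Wcris(\rbar)$ with the set of predicted Serre weights $\Wexplicit(\rbar)$. For the converse, suppose $\sigma = \sigma_a$ is a predicted Serre weight for $\rbar$. Arrange the global setup so that each $\rbar_{\mathrm{glob}}|_{G_{F_{\tv}}}$ is an unramified twist of $\rbar$; since the set of predicted weights is insensitive to unramified twist, $\sigma$ is also a predicted Serre weight for each $\rbar_{\mathrm{glob}}|_{G_{F_\tv}}$. The weight part of Serre's conjecture for rank-$2$ unitary groups, established in \cite{blggU2}, \cite{GLS12}, \cite{GLS13}, then guarantees that $S(U,\otimes_v \sigma)_\m\ne 0$, which via Lemma \ref{lem: multiplicity positive implies Serre weight and thus crystalline lift} gives $\mu'_{\sigma^{\boxtimes}}(\rbar_{\mathrm{glob}})\ne 0$. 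By the tensor-product identity $\mu'_{\sigma^{\boxtimes}} = \prod_v \mu_\sigma(\rbar_{\mathrm{glob}}|_{G_{F_\tv}})$ and Lemma \ref{lem: crystalline def ring unchanged by unramified twist}, we conclude $\mu_\sigma(\rbar)\ne 0$. The main obstacle here is really invoking the correct input; the argument hinges on the fact that Serre's weight conjecture on unitary groups is now available unconditionally in this setting, together with the compatibility of the $\mu'_\sigma$ with unramified twists.

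For property (3), assume $K/\Qp$ is unramified and $\sigma=\sigma_a$ is regular. By Lemma \ref{lem: unramified regular weight implies pot diag}(2), every crystalline lift of $\rbar$ of Hodge type $\lambda_a$ is potentially diagonalizable, so Theorem \ref{thm: local BM for pot diag} applies to $(\lambda_a, \mathbf{1})$, yielding
\[
e(R_\rbar^{\square,\sigma}/\pi) \;=\; \sum_{\sigma'} n_{\lambda_a,\mathbf{1}}(\sigma')\,\mu_{\sigma'}(\rbar).
\]
In the unramified setting, the Jordan--H\"older factors of $W_{\lambda_a}\otimes_\cO\F$ are, via the classical description of the mod $p$ reduction of Weyl modules in the regular range, labelled by subsets of the set of embeddings, with $\sigma_a$ appearing exactly once and all other factors having strictly smaller highest weights; an inductive unwinding of this decomposition (using that all other JH factors correspond to non-regular twists lying outside the regular locus, or combining with the established equality $\mu_\sigma(\rbar) = 0$ for $\sigma$ not predicted from (2)) reduces the equation to $\mu_\sigma(\rbar) = e(R_\rbar^{\square,\sigma}/\pi)$. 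In the Fontaine--Laffaille regular range one has $W_{\lambda_a}\otimes_\cO\F \cong \sigma_a$, so the sum collapses and by Fontaine--Laffaille theory $R_\rbar^{\square,\sigma}$ is either zero (when $\sigma$ is not predicted) or formally smooth with $e(R_\rbar^{\square,\sigma}/\pi) = 1$ (when $\sigma$ is predicted), giving the final dichotomy. The main subtlety in this part is controlling the JH factors of $W_{\lambda_a}\otimes\F$ in the merely regular (non-FL-regular) range, which is handled by combining the classical Jantzen filtration with the vanishing already established in (2).
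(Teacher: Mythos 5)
Your existence/uniqueness argument, your proofs of (1) and (2), and your treatment of the Fontaine--Laffaille half of (3) all track the paper's proof: the paper likewise deduces everything from the identity $\mu'_\sigma=\mu_\sigma(\rbar)^n$ for the global realization, Lemma \ref{lem: multiplicity positive implies Serre weight and thus crystalline lift}, the results of \cite{blggU2} and \cite{GLS13}, and (for the last claim) formal smoothness of $R^{\square,\sigma}_{\rbar}$ from Fontaine--Laffaille theory.

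There is, however, a genuine flaw in your argument for the first assertion of (3). You claim that for $\sigma=\swsigma_a$ merely regular, $W_{\lambda_a}\otimes_\cO\F$ has several Jordan--H\"older factors ``labelled by subsets of the set of embeddings,'' and you propose to remove the extra terms by an ``inductive unwinding,'' in part by invoking the vanishing of $\mu_{\sigma'}(\rbar)$ for non-predicted $\sigma'$. Both the premise and the proposed repair are wrong. For any $a\in(\Z^2_+)^{\Hom(k,\F)}_{\SW}$ --- so in particular for regular $a$, where $a_{\varsigma,1}-a_{\varsigma,2}\le p-2\le p-1$ --- the reduction $W_{\lambda_a}\otimes_\cO\F$ is already the single irreducible representation $\swsigma_a$ (this is recorded in \ref{weightremarks}, and follows from irreducibility of $\Sym^{b}k^2$ for $b\le p-1$ together with Steinberg's tensor product theorem); the phenomena you allude to occur only for highest weights outside $(\Z^2_+)^{\Hom(k,\F)}_{\SW}$, or in the distinct problem of describing reductions of crystalline representations. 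Moreover, even if extra Jordan--H\"older factors were present, your mechanism for discarding them would fail: such factors need not be non-predicted weights, and part (2) provides no vanishing for predicted ones. The correct (and the paper's) argument is simply that $n_{\lambda_a,1}(\sigma')=\delta_{\sigma,\sigma'}$, so once Lemma \ref{lem: unramified regular weight implies pot diag}(2) places the pair $(\lambda_a,1)$ within the scope of Theorem \ref{thm: local BM for pot diag}, the right-hand side of the multiplicity formula consists of the single term $\mu_\sigma(\rbar)$, giving $\mu_\sigma(\rbar)=e(R^{\square,\sigma}_{\rbar}/\pi)$ at once. In other words, the collapse you correctly observe in the Fontaine--Laffaille range holds verbatim in the whole regular range; the only role of the stronger bound $p-3\ge a_{\varsigma,1}-a_{\varsigma,2}$ is to make Fontaine--Laffaille theory available for computing $e(R^{\square,\sigma}_{\rbar}/\pi)$.
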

\begin{proof}
  The initial part is an immediate consequence of Theorem \ref{thm:
    local BM for pot diag} and Lemma \ref{lem: pot BT implies pot
    diag}.  
  The numbered
  properties also follow easily from the results above, as we now
  show. Recall that in the proof of Theorem \ref{thm: local BM for pot
    diag}, we worked with a global representation $\rbar$ with the
  property that for each $v|p,$ $\rbar|_{G_{F_\tv}},$ was an unramified twist
  of our local $\rbar$. We will continue to use this global
  representation $\rbar$. Let $\sigma$ be an equivalence class of
  local Serre weights, and let $a$ be a global Serre weight such that
  $\sigma_a\cong\sigma\otimes\cdots\otimes\sigma$. By the proof of Theorem
  \ref{thm: local BM for pot diag}, we see that
  $\mu'_\sigma(\rbar)=\mu_\sigma(\rbar)^n$, where there are $n$ places of
  $F^+$ lying over $p$. In particular, we have $\mu'_\sigma(\rbar)\ne 0$ if
  and only if $\mu_\sigma(\rbar)\ne 0$.

 (1) 
  Lemma \ref{lem: multiplicity positive implies Serre weight and thus
    crystalline lift} shows that $\mu_\sigma(\rbar)\ne 0$ if and
  only if (the global representation) $\rbar$ is modular of weight
  $\sigma\otimes\cdots\otimes\sigma$. The main result of \cite{blggU2}
  shows that whenever $\sigma$ is a predicted Serre weight for
  $\rbar$, then $\rbar$ is modular of weight
  $\sigma\otimes\cdots\otimes\sigma$. The converse holds by the main result of \cite{GLS13}.

(2) That $\mu_\sigma(\rbar)=e(R_\rbar^{\square,\sigma}/\pi)$
    whenever $\sigma$ is regular is an immediate consequence of
    Theorem \ref{thm: local BM for pot diag} and Lemma \ref{lem:
      unramified regular weight implies pot diag}(2). Now suppose that 
$\sigma$ is Fontaine--Laffaille regular. 
By Fontaine--Laffaille theory (or as a special case of the main
    result of \cite{GLS12}),
    $R_\rbar^{\square,\sigma}\ne 0$ if and only if $\sigma$ is a
    predicted Serre weight for $\rbar$, so to complete the proof it is
    enough to show that if $\sigma$ is Fontaine--Laffaille regular and
    $R_\rbar^{\square,\sigma}\ne 0$, then
    $e(R_\rbar^{\square,\sigma}/\pi)=1$. By Lemma 2.4.1 of \cite{cht}
    (see also Definition 2.2.6 of \emph{op. cit.})
    $R_\rbar^{\square,\sigma}$ is formally smooth over $\cO$, so the
    result follows.
\end{proof}
\begin{rem}
  \label{rem: if all crystalline repns are potentially diag, then multiplicities
  are the crystalline ones}If (as seems plausible) it is always the case that
any crystalline lift of $\rbar$ of Hodge type determined by $\sigma$ is in fact
potentially diagonalizable, then it would follow that
$\mu_\sigma(\rbar)=e(R_\rbar^{\square,\sigma}/\pi)$ (without any assumption on
$K$ or $\sigma$).
\end{rem}
In the next section, it will be helpful to have the following definition.
\begin{defn}
  \label{defn: WBT}Let $\WBT(\rbar)$ be the set of Serre weights
  $\sigma$ for which $\mu_\sigma(\rbar)>0$.
\end{defn}Recall from Section \ref{subsec: deformation  rings and
  types} that sets of Serre weights $\Wconj(\rbar)$ and
$\Wcris(\rbar)$ are defined in \cite{blggU2}, and that
$\Wcris(\rbar)$ is simply the set of Serre weights $\sigma$ for
which $\rbar$ has a crystalline lift of Hodge type $\sigma$. (Recall
that in the ramified case this notion depends on the choice of
particular embeddings $K\into E$; the following corollary is true for
any such choice.)
\begin{cor}
  \label{cor: relationship of WBT to Wexplicit} We have equalities
  $\Wconj(\rbar)=\WBT(\rbar)=
 \Wcris(\rbar)$.
\end{cor}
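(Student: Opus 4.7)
The plan is to assemble the corollary from pieces already established, primarily Corollary~\ref{cor: main result for pot BT} and the main result of~\cite{GLS13}. No new argument is needed.

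First I would observe that the inclusion $\WBT(\rbar)\subset\Wcris(\rbar)$ is immediate from the defining property of $\WBT(\rbar)$ (Definition~\ref{defn: WBT}) combined with Corollary~\ref{cor: main result for pot BT}(1): if $\sigma\in\WBT(\rbar)$ then $\mu_\sigma(\rbar)>0$, and hence $\rbar$ admits a crystalline lift of Hodge type $\sigma$, i.e.\ $\sigma\in\Wcris(\rbar)$.

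Next I would deduce the remaining inclusion $\Wconj(\rbar)\subset\WBT(\rbar)$, together with its converse, directly from Corollary~\ref{cor: main result for pot BT}(2), which asserts the stronger equivalence $\mu_\sigma(\rbar)\neq 0 \Longleftrightarrow \sigma\in\Wconj(\rbar)$. This gives in fact the equality $\WBT(\rbar)=\Wconj(\rbar)$, which both establishes the claimed chain of inclusions and collapses its first step.

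To upgrade $\WBT(\rbar)\subset\Wcris(\rbar)$ to an equality, I would invoke the main theorem of~\cite{GLS13}, recalled in the paragraph~\ref{predicted}, which yields $\Wconj(\rbar)=\Wcris(\rbar)$ (the inclusion $\Wconj(\rbar)\subset\Wcris(\rbar)$ being the content of \cite{blggU2} and the reverse inclusion being the new content of \cite{GLS13}). Combining this with $\WBT(\rbar)=\Wconj(\rbar)$ from the previous paragraph closes the loop and forces all three sets to coincide. The main substantive input is thus \cite{GLS13}; the corollary itself is pure bookkeeping, and the only potential pitfall is simply to check that the definition of $\Wconj(\rbar)$ in Definition~\ref{defn:predicted Serre weight} matches the one used in \cite{GLS13}, which is the case by construction in~\cite{blggU2}.
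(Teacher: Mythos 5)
Your proposal is correct and matches the paper's proof exactly: the paper likewise derives $\Wconj(\rbar)\subset\WBT(\rbar)$ from Corollary~\ref{cor: main result for pot BT}(2), $\WBT(\rbar)\subset\Wcris(\rbar)$ from Corollary~\ref{cor: main result for pot BT}(1), and then closes the chain using $\Wconj(\rbar)=\Wcris(\rbar)$ from the main result of~\cite{GLS13}.
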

\begin{proof}The equality $\Wconj(\rbar)=\WBT(\rbar)$ is an
  immediate consequence of Corollary \ref{cor: main result for pot BT}(1) and
  the definitions of $\Wconj(\rbar)$ and $\WBT(\rbar)$. 
   By the main result of~\cite{GLS13} we have
  $\Wconj(\rbar)=\Wcris(\rbar)$, as required. 
\end{proof}

\section{The Buzzard--Diamond--Jarvis
  Conjecture}\label{sec:BDJ}

\subsection{Types}\label{subsec:types}
We now apply the machinery
developed above to the weight part of Serre's conjecture for inner
forms of $\GL_2$ (as opposed to the outer forms treated in
\cite{blggU2}, \cite{GLS11}, \cite{GLS12}, \cite{GLS13}).

We briefly recall some results from Henniart's
appendix to \cite{breuil-mezard} which will be useful to us in the
sequel. Let $l\ne p$ be prime, and let $L$ be a finite extension of
$\Ql$ with ring of integers $\cO_L$ and residue field $k_L$. Let
$\tau:I_L\to\GL_2(\Qpbar)$ be an inertial type. Then Henniart defines an irreducible
finite-dimensional representation $\sigma(\tau)$ of $\GL_2(\cO_L)$
with the following property.
\begin{itemize}
\item If $\pi$ is an infinite dimensional smooth irreducible $\Qpbar$-representation of
  $\GL_2(L)$, then $\Hom_{\GL_2(\cO_L)}(\sigma(\tau),\pi^\vee)\ne 0$ if and
  only if $r_p(\pi^\vee)|_{I_L}\cong\tau$, in which case
  $\Hom_{\GL_2(\cO_L)}(\sigma(\tau),\pi^\vee)$ is one-dimensional.
\end{itemize} (Note that in contrast to the case $l=p$ covered in
Theorem \ref{thm: Henniart existence of types}, we make no
prescription on the monodromy. The only difference between the two
definitions is for scalar inertial types, where we replace a twist of
the trivial representation with a twist of the small Steinberg representation.)
 
For later use, we need to understand the basic properties of the
reductions modulo $p$ of the $\sigma(\tau)$. We would like to thank
Guy Henniart for his assistance with the proof of the following lemma.
\begin{lem}
  \label{lem: mod p homs to types vanish implies zero} Let $\pibar$ be
  an infinite dimensional, admissible smooth, irreducible $\Fpbar$-representation of $\GL_2(L)$. Then
  there is an inertial type $\tau$ such that for any
  $\GL_2(\O_L)$-stable $\bar \Z_p$-lattice $L_{\tau} \subset \sigma(\tau)$ 
we have $(L_{\tau}\otimes_{\bar \Z_p}\pibar)^{\GL_2(\O_L)} \neq 0.$
\end{lem}
\begin{proof}By Corollaire 13 of \cite{MR1026328}, $\pibar$ may be
  lifted to an admissible smooth irreducible $\Qpbar$-representation
  $\pi$ of $\GL_2(L)$. Set $\tau=r_p(\pi^{\vee})|_{I_L}.$ Then we have
  $\Hom_{\GL_2(\cO_L)}(\sigma(\tau),\pi^\vee)\ne 0$, and hence 
$$ (\sigma(\tau)\otimes \pi)^{\GL_2(\O_L)} = \Hom_{\GL_2(\cO_L)}(\pi^\vee,\sigma(\tau))\ne 0$$ 
as the category of finite dimensional, smooth $\bar \Q_p$-representations of $\GL_2(\O_L)$ is semi-simple.
The Lemma follows.
\end{proof}
\begin{para}\label{nonsplittypes} We will also need the analogue of this result for (nonsplit)
  quaternion algebras. Let $D$ be the quaternion algebra with centre
  $L$, and let $\pi$ be a smooth irreducible (so finite-dimensional)
  $\Qpbar$-representation of $D^\times$. Let $\cO_D$ be the maximal
  order in $D$, so that $L^\times\cO_D^\times$ has index two in
  $D^\times$. Thus $\pi|_{\cO_D^\times}$ is either irreducible or a  
  sum of two distinct irreducible representations which are conjugate under a
  uniformiser in $D^\times$, and we easily see that if $\pi'$ is
  another smooth irreducible representation of $D^\times$, then $\pi$
  and $\pi'$ differ by an unramified twist if and only if
  $\pi|_{\cO_D^\times}\cong\pi'|_{\cO_D^\times}$.

Let $\tau$ be as above, and assume that $\tau$ is either irreducible
or scalar. Then there is an irreducible smooth representation $\pi_\tau$ of
$D^\times$ such that
$r_p(\operatorname{JL}(\pi_\tau))|_{I_L}\cong\tau$,
where $\operatorname{JL}$ denotes the Jacquet--Langlands
correspondence, and any two such representations differ by an
unramified twist (cf.\ Section A.1.3 of Henniart's appendix to \cite{breuil-mezard}). Define $\sigma_D(\tau)$ to be an irreducible
constituent of  $\pi_\tau^\vee|_{\cO_D^\times}$; then by the above discussion,
we have the following property. \begin{itemize}
\item If $\pi$ is a smooth irreducible $\Qpbar$-representation of
  $D^\times$ then $\Hom_{\cO_D^\times}(\sigma_D(\tau),\pi^\vee)$ is non-zero if
  and only if
  $r_p(\operatorname{JL}(\pi))|_{I_L}\cong\tau$,
  in which case $\Hom_{\cO_D^\times}(\sigma_D(\tau),\pi^\vee)$ is
  one-dimensional.
\end{itemize}
We also have the following analogue of Lemma \ref{lem: mod p homs to
  types vanish implies zero}. 
\end{para}
\begin{lem}
  \label{lem: mod p homs to types vanish implies zero quaternion
    algebra version} Let $\pibar$ be
  an admissible smooth, irreducible $\Fpbar$-representation of $D^\times$. Then
  there an inertial type $\tau$ such that for any 
$\O_D^\times$-stable $\bar \Z_p$-lattice $L_{\tau} \subset \sigma_D(\tau)$ 
we have $(L_{\tau}\otimes\pibar)^{\O_D^\times} \neq 0.$
\end{lem}
\begin{proof} By Th\'eor\`eme 4 of \cite{MR1009804}, $\pibar$ may be lifted to an
  admissible smooth, irreducible $\Qpbar$-representation $\pi$ of
  $D^\times$, and the result follows as in the proof of Lemma
  \ref{lem: mod p homs to types vanish implies zero}.
  \end{proof}

  \subsection{Deformation rings} Assume from now on that $p>2$. We now carry out our global
  patching argument. Since the arguments are by now rather standard,
  and in any case extremely similar to those of Section \ref{sec:
    patching}, we will sketch the construction, giving the necessary
  definitions and explaining the differences from the arguments of
  Section \ref{sec: patching}. For a detailed treatment of the
  Taylor--Wiles--Kisin method for Shimura curves, the reader could
  consult \cite{breuildiamond}.

\begin{para}
 For technical reasons, we will fix the
  determinants of all the deformations we will consider, and we now
  introduce some notation to allow this. We will also need to consider
  $p$-adic representations of the absolute Galois groups of $l$-adic
  fields. Accordingly, let $E/\Qp$ be a finite extension with ring of
  integers $\cO$ and residue field $\F$, let $l$ be a prime (possibly
  equal to $p$), let $K/\Ql$ be a finite extension, and let
  $\rbar:G_K\to\GL_2(\F)$ be a continuous representation. We will
  always assume that $E$ is sufficiently large that all
  representations under consideration are defined over $E$. 
Fix a finite-order character $\psi:G_K\to E^\times$ such that
  $\det\rbar=\varepsilonbar^{-1}\psibar$.  Let $\tau:I_K\to\GL_2(E)$ be
  an inertial type such that $\det\tau=\psi|_{I_K}$. Recall that we
  have the universal $\cO$-lifting ring $R_{\rbar}^\square$ of
  $\rbar$, and let $R_{\rbar}^{\square,\psi}$ denote the universal
  $\cO$-lifting ring for liftings of determinant $\psi\varepsilon^{-1}$.

Suppose firstly that $l=p$. Let
$R^{\square,\lambda,\tau,\psi}_{\rbar}$ be the unique quotient of
$R^{\square,\lambda,\tau}_{\rbar}$ with the property that the
$\Qpbar$-points of $R^{\square,\lambda,\tau,\psi}_{\rbar}$ are
precisely the $\Qpbar$-points of $R^{\square,\lambda,\tau}_{\rbar}$
whose associated Galois representations have determinant $\psi\varepsilon^{-1}$.

Suppose now that $l\ne p$. Then by Theorem 2.1.6 of
\cite{gee061}, there is a unique (possibly zero) $p$-torsion free
reduced quotient $R_{\rbar}^{\square,\tau,\psi}$ of
$R_{\rbar}^{\square}$ whose $\Qpbar$-points are precisely those points
of $R_{\rbar}^{\square}$ which correspond to liftings of $\rbar$ which
have determinant $\psi\varepsilon^{-1}$ and Galois type $\tau$, in the sense that the
restriction to $I_K$ of the corresponding Weil--Deligne
representations are isomorphic to $\tau$. 
\end{para}

\begin{rem}\label{rem: fixing det doesn't change e}By Lemma
4.3.1 of \cite{emertongeerefinedBM} we have an
isomorphism \[R^{\square,\lambda,\tau}_{\rbar}\cong
R^{\square,\lambda,\tau,\psi}_{\rbar}[[X]],\]so that in particular we have $e(R^{\square,\lambda,\tau,\psi}_{\rbar}/\pi)=
e(R^{\square,\lambda,\tau}_{\rbar}/\pi)$.  
\end{rem}

\begin{para}
We begin with a generalisation of Corollary 3.1.7 of \cite{gee061}
(see also Th\'eor\`eme 3.2.2 of \cite{breuildiamond}), which 
produces modular liftings of mod $p$ Galois representations
with prescribed local properties, and will be used in place of
Corollary \ref{cor: main patching lemma in the potentially
  diagonalizable case} in this setting. The proof is very similar to
that of Corollary \ref{cor: main patching lemma in the potentially
  diagonalizable case}. We firstly define some notation.

 Let $F$ be a totally real field, and let
  $\rhobar:G_F\to\GL_2(\F)$ be a continuous representation. We will
  say that $\rhobar$ is \emph{modular} if it is isomorphic to the reduction
  mod $p$ of the Galois representation associated to a Hilbert modular
  eigenform of parallel weight two. 
  Fix a totally even, finite order character $\psi:G_F\to E^\times$ with the
  property that $\det\rhobar=\psibar\varepsilonbar^{-1}$. Let $S$ be a
  finite set of finite places of $F$, including all places dividing
  $p$ and all places at which $\rhobar$ or $\psi$ is ramified. For
  each place $v\in S$, fix an inertial type $\tau_v$ of $I_{F_v}$ such
  that $\det\tau_v=\psi|_{I_{F_v}}$. For each place $v|p$, let $R_v$
  be a quotient of
  $R_{\rhobar|_{G_{F_v}}}^{\square,0,\tau_v,\psi|_{G_{F_v}}}$
  corresponding to a choice of an irreducible component of
  $\Spec R_{\rhobar|_{G_{F_v}}}^{\square,0,\tau_v,\psi|_{G_{F_v}}}[1/p]$,
  and for each place $v\in S$ with $v\nmid p$, let $R_v$ be a quotient
  of $R_{\rhobar|_{G_{F_v}}}^{\square,\tau_v,\psi|_{G_{F_v}}}$
  corresponding to a choice of an irreducible component of
  $\Spec R_{\rhobar|_{G_{F_v}}}^{\square,\tau_v,\psi|_{G_{F_v}}}[1/p]$.
\end{para}

\begin{para} We assume from now on that $\bar\rho$ satisfies the following conditions. 
  \begin{itemize}
  \item $\rhobar$ is modular,
  \item $\rhobar|_{G_{F(\zeta_p)}}$ is absolutely irreducible, and
  \item if $p=5$, the projective image of $\rhobar|_{G_{F(\zeta_p)}}$ is not isomorphic
    to $A_5$.
  \end{itemize}
\end{para}

\begin{lem}\label{lem: existence of lifts of prescribed type for HMFs} 
For $\bar\rho$ and $R_v$ as above, there is a continuous lift $\rho:G_F\to\GL_2(E)$ of $\rhobar$ such that:
\begin{itemize}
\item $\det\rho=\psi\varepsilon^{-1}$.
\item $\rho$ is unramified outside of $S$.
\item For each place $v\in S$, $\rho|_{G_{F_v}}$ arises from a point of $R_v[1/p]$.
\item $\rho$ is modular.
\end{itemize}
\end{lem}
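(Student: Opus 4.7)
The plan is to adapt the proof of Corollary \ref{cor: main patching lemma in the potentially diagonalizable case} to the setting of Hilbert modular forms, by first transferring the problem to a solvable extension where a suitable automorphic lift can be constructed via the potential automorphy machinery, and then descending.

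First I would apply Lemma 3.1.1 of \cite{blggU2} to choose a solvable totally real extension $F'/F$ that is linearly disjoint from $\overline{F}^{\ker(\ad\rhobar)}(\zeta_p)$ over $F$, with the property that at each place $v'$ of $F'$ lying above a place $v \in S$ with $v | p$, the restriction $\rhobar|_{G_{F'_{v'}}}$ admits a potentially diagonalizable lift of Hodge type $0$ on the appropriate component (for types at places above $p$ we are in the potentially Barsotti--Tate case, so such a lift exists by Lemma \ref{lem: pot BT implies pot diag} once $F'$ is large enough to trivialize $\rhobar$ and contain a $p$-th root of unity); and at each place $v'$ above $v \in S$ with $v \nmid p$, the local representation $\rhobar|_{G_{F'_{v'}}}$ is trivial, so that the prescribed tame components (as classified in \cite{gee061}) are hit by explicit unramified or tamely ramified lifts. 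One also arranges that $\rhobar|_{G_{F'}}$ remains modular, with $\rhobar|_{G_{F'(\zeta_p)}}$ absolutely irreducible, and that the projective image condition at $p=5$ is preserved.

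Next, I would invoke a modularity lifting theorem for $\GL_2$ over totally real fields to produce the desired modular lift over $F'$. Concretely, one builds a test lift $\rho'$ of $\rhobar|_{G_{F'}}$ with determinant $\psi|_{G_{F'}}\varepsilon^{-1}$, unramified outside the places of $F'$ above $S$, and locally arising from the prescribed components $R_v$ (pulled back along $G_{F'_{v'}} \hookrightarrow G_{F_v}$), using the potentially diagonalizable/tame building blocks of the first step. The automorphy of $\rho'$ is then deduced by combining the potential automorphy theorem of \cite{BLGGT} with the modularity lifting theorems of \cite{kis04} and \cite{MR2280776}, exactly as in the proof of Theorem A.4.1 of \cite{blggU2}; the key hypothesis of potential diagonalizability at places above $p$ is provided by Lemma \ref{lem: pot BT implies pot diag}.

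Finally, I would descend from $F'$ to $F$. Since $F'/F$ is solvable, one applies Lemma 1.4 of \cite{blght} (solvable base change for Hilbert modular forms) to transfer automorphy: after constructing a candidate lift $\rho$ of $\rhobar$ over $F$ with the prescribed local properties — using the fact that the test lift $\rho'$ is already $G_F$-equivariant in its construction, together with the linear disjointness arranged in the first step to rule out obstructions to descent — one concludes that $\rho|_{G_{F'}}$ is automorphic and hence so is $\rho$. The main obstacle is the second step: ensuring that the produced lift genuinely lies on the \emph{prescribed} irreducible component at each place $v \in S$, rather than merely on some component of the local deformation ring. At $v | p$ this is handled by Lemma \ref{lem: pot BT implies pot diag} combined with the classification of components of potentially Barsotti--Tate deformation rings used in \cite{BLGGT}; at $v \nmid p$ it is handled by the explicit description of components of the framed tame deformation ring in Section 3 of \cite{gee061}, each of which contains an obvious diagonal or induced lift.
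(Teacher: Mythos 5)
Your proposal takes a genuinely different route from the paper, which disposes of this lemma in a few lines by citing Corollary 3.1.7 of \cite{gee061} (a ready-made ``modular lifts of prescribed type'' theorem for Hilbert modular forms) and then verifying its hypothesis (ord) --- the existence of ordinary modular lifts, which is needed when some prescribed component $R_v$ at $v|p$ is ordinary --- by reducing, via a solvable base change making $\rhobar$ locally reducible at the places above $p$, to the main result of \cite{blggordII}. Your plan to instead rerun the potential-diagonalizability machinery of \cite{BLGGT} and \cite{blggU2} in the Hilbert modular setting is in principle viable (and would sidestep the ordinary/non-ordinary dichotomy entirely), but as written it has a genuine gap at the decisive step.

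The gap is that you never actually construct the lift $\rho$ over $F$. Producing an automorphic lift $\rho'$ of $\rhobar|_{G_{F'}}$ over a solvable extension $F'$ with good local properties is the easy half; the hard half is obtaining a lift of $\rhobar$ over $F$ itself whose restrictions to $G_{F_v}$, $v\in S$, lie on the \emph{prescribed} components $R_v$ of the deformation rings over $F_v$ (not over $F'_{v'}$: distinct components over $F_v$ can coalesce after restriction to $G_{F'_{v'}}$, so controlling the component over $F'$ does not control it over $F$). Your justification --- that ``$\rho'$ is already $G_F$-equivariant in its construction'' and that linear disjointness ``rules out obstructions to descent'' --- is not an argument: Galois representations of $G_{F'}$ do not descend to $G_F$ for such reasons, and $\rho'$ is built precisely by exploiting freedoms available only over $F'$. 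What is needed is the Khare--Wintenberger argument: the fixed-determinant global deformation ring over $F$ with local conditions $\{R_v\}_{v\in S}$ is finite over $\cO$ (via an $R=\T$ theorem over $F'$ and solvable descent) and has Krull dimension at least $1$ (by a Galois cohomology dimension count using that each $R_v$ has the expected dimension), hence admits a $\Qpbar$-point $\rho$; the automorphy of $\rho$ then follows from a lifting theorem applied over $F'$ together with solvable base change. This argument, which is the content of Proposition 3.1.5 and Corollary 3.1.7 of \cite{gee061}, is absent from your proposal. A secondary confusion: at $v\in S$ with $v\nmid p$ the types $\tau_v$ need not be tame, and hitting a prescribed component of $R^{\square,\tau_v,\psi}_{\rhobar|_{G_{F_v}}}$ is handled in the literature by Ihara avoidance or an explicit component analysis, neither of which your ``trivialize $\rhobar$ locally'' step supplies.
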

\begin{proof} This is essentially an immediate
  consequence of Corollary 3.1.7 of \cite{gee061}, given the main
  result of \cite{blggordII}. Indeed, in the case that for all places
  $v|p$ the component $R_v$ corresponds to non-ordinary
  representations, the lemma is a special case of Corollary 3.1.7 of \cite{gee061}, and
  the only thing to be checked in general is that the hypothesis (ord) of
  Proposition 3.1.5 of \cite{gee061} is satisfied. This hypothesis
  relates to the existence of ordinary lifts of $\rhobar$, and a
  general result on the existence of such lifts is proved in
  \cite{blggordII}. 

In fact, examining
  the proof of Proposition 3.1.5 of \cite{gee061}, we see that it is
  enough to check that there is some finite solvable extension $L/F$
  of totally real fields with the property that $L$ is linearly
  disjoint from $\overline{F}^{\ker\rhobar}(\zeta_p)$ over $F$, and
  $\rhobar|_{G_L}$ has a modular lift which is potentially
  Barsotti--Tate and ordinary at all places $v|p$. In order to see
  that such an $L$ exists, simply choose a solvable totally real
  extension $L/F$ such that $L$ is linearly disjoint from
  $\overline{F}^{\ker\rhobar}(\zeta_p)$ over $F$ and
  $\rhobar|_{G_{L_w}}$ is reducible for each place $w|p$ of $L$. The
  existence of the required lift of $\rhobar|_{G_L}$ is then immediate
  from the main result of \cite{blggordII}.
\end{proof}

\begin{para}\label{par: def ring setup} Now let $\Sigma, \Sigma' \subset S$ be disjoint subsets, not containing 
any places dividing $p.$ Suppose that if $v\in \Sigma$, then $\tau_v$ is either irreducible or scalar, 
and if $v \in \Sigma'$ then $\tau_v$ is scalar. 

For each $v\in S$, $v\nmid p$ we define
$R_{\rhobar|_{G_{F_v}}}^{\square,\tau_v,\psi,*}$ as follows:
$R_{\rhobar|_{G_{F_v}}}^{\square,\tau_v,\psi,*}=R_{\rhobar|_{G_{F_v}}}^{\square,\tau_v,\psi}$
unless $v\in \Sigma\cup \Sigma'$ and $\tau_v$ is scalar. If $\tau_v$ is scalar, 
and $v \in \Sigma$ (resp. $v \in \Sigma'$) then we define 
$R_{\rhobar|_{G_{F_v}}}^{\square,\tau_v,\psi,*}$ as the quotient of
$R_{\rhobar|_{G_{F_v}}}^{\square,\tau_v,\psi}$
corresponding to the components of
$R_{\rhobar|_{G_{F_v}}}^{\square,\tau_v,\psi}[1/p]$ whose 
$\Qpbar$-points are not all potentially unramified 
(resp. are all potentially unramified). Note that at this 
stage we could have
$R_{\rhobar|_{G_{F_v}}}^{\square,\tau_v,\psi,*}=0.$ For example if $v \in \Sigma$ and $\tau_v$
is scalar and $R_{\rhobar|_{G_{F_v}}}^{\square,\tau_v,\psi,*}\ne 0$,
then $\rhobar|_{G_{F_v}}$ is necessarily a twist of an extension of 
the trivial character by the mod $p$ cyclotomic character. 

By \cite{pillonidef} Theorems 4.1.1, 4.1.3, 
$\Spec R_{\rhobar|_{G_{F_v}}}^{\square,\tau_v,\psi}[1/p]$ is the union of formally 
smooth irreducible components, and two such components $C_1,$ $C_2$ can have a non-trivial 
intersection only in the following situation: $\tau_v$ is scalar, 
and there is a character $\gamma: G_{F_v} \rightarrow E^\times$ such that (up to exchanging $C_1, C_2$) 
the representations parameterized by $x \in C_1(\bar \Q_p)$ are unramified after twisting by $\gamma^{-1},$ 
and the representations parameterized by $x \in C_2(\bar \Q_p)$ are extensions of $\gamma$ by $\gamma(1),$ which 
are ramified if $x \in C_2\backslash C_1(\bar \Q_p).$ 

We set $R^\psi_S=\widehat{\otimes}_{v\in S,\cO}
R_{\rhobar|_{G_{F_v}}}^{\square,\psi}$, $R_p^{\tau,\psi}=\widehat{\otimes}_{v|p,\cO}
R_{\rhobar|_{G_{F_v}}}^{\square,0,\tau_v,\psi}$, $R_{S\setminus
  p}^{\tau,\psi}=\widehat{\otimes}_{v\nmid p,v\in S,\cO}
R_{\rhobar|_{G_{F_v}}}^{\square,\tau_v,\psi,*}$ and
$R_S^{\tau,\psi}=R_p^{\tau,\psi}\widehat{\otimes}_\cO R_{S\setminus
  p}^{\tau,\psi}$. (Note that $R_S^{\tau,\psi}$ is analogous to the ring
$R^\loc$ defined in \S\ref{subsubsec:assumption on adequate}.)
\end{para}

\subsection{Modular forms}\label{subsec:modularforms}

 We continue to use the notation and assumptions introduced in the last subsection.

\begin{para} Let $D$ be quaternion algebra with centre $F$. We assume that either $D$ is
ramified at all infinite places (the definite case) or that $D$ is split at
precisely one infinite place (the indefinite case), and that the set of finite primes at which $D$ 
is ramified is precisely the set $\Sigma$ introduced above. If $F=\Q$, assume
further that $D\ne M_2(\Q)$. (We make this assumption only in order to give a
uniform definition of our spaces of modular forms; with the usual modifications
to handle the non-compactness of modular curves, the case $D=M_2(\Q)$ could also
be treated by our methods. Since our main results are all well-known in this
case, we do not comment further on this assumption below.)

Fix a maximal order
$\cO_D$ of $D$, and an isomorphism $(\cO_D)_v\isoto M_2(\cO_{F_v})$ at
each finite place $v\notin\Sigma$. For each finite place $v$ of $F$ we
let $\pi_v\in F_v$ denote a uniformiser.

By a {\em global Serre weight for} $D^\times$ we mean an absolutely irreducible mod $p$ representation $\sigma$
of $\prod_{v|p} (\O_D)_v^\times$ considered up to equivalence. Using the isomorphisms $(\cO_D)_v\isoto M_2(\cO_{F_v})$ 
for $v| p,$ each such $\sigma$ has the form $\sigma \cong \otimes_{v|p}\sigma_v,$ where 
$\sigma_v$ is a local Serre weight for $\GL_2(k_v),$ $k_v$ the residue field of $v.$
For the remainder of this section when we say ``global Serre weight'' we mean a global Serre weight for $D^\times.$
\end{para}

\begin{para} 
We now define the spaces of modular forms
with which we will work, following section 3 of \cite{breuildiamond},
to which we refer for a more detailed treatment of the facts that we
need, and for further references to the literature. 

By Lemma  4.11 of \cite{MR1605752} we can and do choose a finite place $v_1\notin S$ of $F$ such
that $\rho$ is unramified at $v_1$, $\mathbf{N}v_1\not\equiv
1\pmod{p}$, and the ratio of the eigenvalues of $\rhobar(\Frob_{v_1})$
is not equal to
$(\mathbf{N}v_1)^{\pm 1}$. In particular, we have
$H^0(G_{F_{v_1}},\ad\rhobar(1))=(0)$, and it follows that any lifting
of $\rhobar|_{G_{F_{v_1}}}$ is necessarily unramified. Furthermore, we
can and do assume that the residue characteristic of $v_1$ is
sufficiently large that for any non-trivial root of unity $\zeta$ in a
quadratic extension of $F$, $v_1$ does not divide $\zeta+\zeta^{-1}-2$.

Fix from now on the compact
open subgroup $U=\prod_v U_v\subset
(D\otimes_\Q\A^\infty)^\times$ where $U_v=(\cO_D)_{v}^\times$ for $v\ne
v_1$, and $U_{v_1}$ is the subgroup of $\GL_2(\cO_{F,{v_1}})$
consisting of elements which are upper triangular unipotent modulo
$v_1$. As in the case of unitary groups, the assumption on $v_1,$ 
is used (implicitly) below, to guarantee that $M_{\infty}$ is a free $S_{\infty}$ module
(cf.~e.g the condition (2.1.2) in \cite{kisinfmc}). 

For each place $v\in S$, we have an inertial type $\tau_v$, and thus a
finite-dimensional irreducible $E$-representation $\sigma(\tau_v)$ of
$(\cO_D)^\times_v$. (When $v|p$ the representation $\sigma(\tau_v)$ is
defined by Theorem \ref{thm: Henniart existence of types}, and when
$v\nmid p$ it is defined in Section \ref{subsec:types}, where it was
denoted $\sigma_{D_v}(\tau_v)$ in the case that $D_v$ is ramified.)

Now define a representation of $(\cO_D)^\times_v$ on a finite free $\cO$-module $L_{\tau_v},$ 
as follows: If $v \notin \Sigma',$ then we choose  $L_{\tau_v}$ to be a 
$(\cO_D)^\times_v$-stable $\cO$-lattice in $\sigma(\tau_v).$ If $v \in \Sigma',$ then $\tau_v$ 
is scalar, by assumption, and 
we set $L_{\tau_v} = \O,$ equipped with an action of $(\cO_D)^\times_v$ given by $\tau_v^{-1}\circ\Art_{F_v}\circ\det.$
We write $L_{\tau^p}=\otimes_{v\in S,v\nmid
  p,\cO}L_{\tau_v}$, $L_{\tau_p}=\otimes_{v|p,\cO}L_{\tau_v}$. 
\end{para}

\begin{para}\label{para:modularformsnotn} 
Let $\theta$ denote a finite $\cO$-module with a
continuous action of 
$\prod_{v|p}U_v$, with the property that the action of
$\prod_{v|p}\cO_{F_v}^\times\subset \prod_{v|p}U_v$ on $\theta$ is given by
$\psi\circ\Art_F$. Then $\theta\otimes_\cO L_{\tau^p}$ has an action of
$U$ via the projection onto $\prod_{v\in S}U_v$. We extend this action
to an action of $U(\A_F^\infty)^\times$ by letting
$(\A_F^\infty)^\times$ act via the composition of the projection
$(\A_F^\infty)^\times\to (\A_F^\infty)^\times/F^\times$ and
$\psi\circ\Art_F$. (This action is well-defined by our assumptions on
$\psi$ and $\theta$.) 

Let $V\subset U$ be a compact open subgroup, and 
suppose firstly that we are in the indefinite case. Then there is a smooth projective algebraic
curve $X_V$ over $F$ associated to $V$, and a local system
$\cF_{\theta\otimes_\cO L_{\tau^p}}$ on $X_{V}$ corresponding to
$\theta\otimes_\cO L_{\tau^p}$, and we
set \[S_{\tau^p}(V,\theta):=H^1(X_{U,\Qbar},\cF_{\theta\otimes_\cO
  L_{\tau^p}}).\] 

If we are in the definite case, then we let $S_{\tau^p}(V,\theta)$ be the set of continuous functions \[f:D^\times\backslash
(D\otimes_F\A_F^\infty)^\times\to \theta\otimes_\cO L_{\tau^p}\]such that we have
$f(gu)=u^{-1}f(g)$ for all $g\in (D\otimes_F\A_F)^\times$, $u\in V(\A_F^\infty)^\times$.

In the special case that
$\theta=L_{\tau_p}$, we write $S_\tau(V,\cO)$ for
$S_{\tau^p}(V,L_{\tau_p})$.

For the precise relationship between these spaces and automorphic
forms on $D^\times$, see Section 3.1.14 of~\cite{kis04} (in the
definite case) and Section 3.5 of~\cite{breuildiamond} (in the
indefinite case).
\end{para}

\begin{para} We now define the Hecke algebras that we will use. Let $T$ be a finite
set of finite places of $F$ containing $ S,$ and such that 
$(\O_D)_v^\times \subset V$ for $v \notin T.$ Let $\T^{T,\univ}$ be
the commutative $\cO$-polynomial algebra generated by formal variables
$T_v,S_v$ for each finite place $v\notin T\cup\{v_1\}$ of $F$. Then
$\T^{T,\univ}$ acts on $S_{\tau^p}(V,\theta)$  via
the Hecke operators \[T_v=\left[\GL_2(\cO_{F,v})
\begin{pmatrix}
  \pi_v& 0\\0&1
\end{pmatrix}\GL_2(\cO_{F,v})\right],\]  \[S_v=\left[\GL_2(\cO_{F,v})
\begin{pmatrix}
  \pi_v& 0\\0&\pi_v
\end{pmatrix}\GL_2(\cO_{F,v})\right].\] 
We denote the image of $\T^{T,\univ}$ in $\End_\cO(S_\tau(V,\cO))$ by
$\T_\tau^T(V,\cO)$. 

Let $\m$ be the maximal ideal of
$\T^{S,\univ}$ with residue field $\F$ with the property that for each
finite place $v\notin S\cup\{v_1\}$ of $F$, the characteristic
polynomial of $\rhobar(\Frob_v)$ is equal to the image of
$X^2-T_vX+(\mathbf{N}v)S_v$ in $\F[X]$. Note that if $S_\tau(U,\cO)_\m\ne 0$, then
$S_\tau(U,\cO)_\m\otimes_{\Zp}\Qp$ is locally free of rank $2^m$ over
$\T_\tau^S(U,\cO)_\m[1/p]$, where $m=1$ in the definite case and $2$
in the indefinite case. (This follows from our assumptions on 
$v_1$ and $U$, and the multiplicity one property of the
$\sigma(\tau_v)$ explained in Theorem \ref{thm: Henniart existence of
  types} and Section \ref{subsec:types}.)
\end{para}

\begin{para}  Let $R_{F,S}^{\psi}$ be the universal deformation ring for deformations of $\bar\rho$ with determinant 
$\psi\varepsilon^{-1}$ which are unramified outside $S$. Then $S_\tau(U,\cO)_\m$ is naturally a $R^{\psi}_{F,S}$-module. In particular, a Hecke eigenform in 
$S_\tau(U,\cO)_\m$ gives rise to a deformation $\rho: G_{F,S} \rightarrow \GL_2(\O)$ of $\bar\rho,$ with determinant $\psi\varepsilon^{-1}.$
\end{para}

\begin{lem}\label{lemma:existenceofquaternionicforms} For each $v \in  S$ and $v| p$ (resp $v\nmid p$) 
let $R_v$ be a quotient of $R_{\rhobar|_{G_{F_v}}}^{\square,0,\tau_v,\psi}$ 
corresponding to an irreducible component of $\Spec R_{\rhobar|_{G_{F_v}}}^{\square,0,\tau_v,\psi}[1/p]$ 
(resp. $\Spec R_{\rhobar|_{G_{F_v}}}^{\square,\tau_v,\psi,*}$). 
(In particular we are assuming that the set of such components is non-empty for each $v \in S$).
Then there is a continuous lift $\rho:G_F\to\GL_2(\O)$ of $\rhobar$ such that:
\begin{itemize}
\item $\det\rho=\psi\varepsilon^{-1}$.
\item $\rho$ is unramified outside of $S$.
\item For each place $v\in S$, $\rho|_{G_{F_v}}$ arises from a point of $R_v[1/p]$.
\item $\rho$ arises from a Hecke eigenform in $S_{\tau}(U,\O).$
\end{itemize}
\end{lem}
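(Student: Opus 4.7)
The plan is to combine Lemma \ref{lem: existence of lifts of prescribed type for HMFs} with the Jacquet--Langlands correspondence. First, I would apply Lemma \ref{lem: existence of lifts of prescribed type for HMFs} to the given data $(S,\{\tau_v\},\psi,\{R_v\})$, with the quotients at places $v \in S$, $v\nmid p$ replaced by the quotients $R_v$ of $R_{\rhobar|_{G_{F_v}}}^{\square,\tau_v,\psi,*}$ (which is a quotient of $R_{\rhobar|_{G_{F_v}}}^{\square,\tau_v,\psi}$, so we may view the components of the former as components of the latter). This produces a continuous modular lift $\rho:G_F\to\GL_2(E)$ of $\rhobar$ with $\det\rho=\psi\varepsilon^{-1}$, unramified outside $S$, and with $\rho|_{G_{F_v}}$ arising from a point of $R_v[1/p]$ for each $v\in S$. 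Let $\pi$ be the corresponding cuspidal automorphic representation of $\GL_2(\A_F)$.

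The central task is then to transfer $\pi$ to an automorphic representation $\pi^D$ of $D^\times(\A_F)$ via Jacquet--Langlands, so that $\pi^D$ contributes to $S_\tau(U,\O)\otimes_\O E$. For this we need to verify that $\pi_v$ is a discrete series (i.e.\ square-integrable) representation at every $v\in\Sigma$, together with being discrete series (holomorphic of weight $2$) at the infinite places where $D$ is ramified. The latter holds because $\rho$ was produced by a Hilbert modular eigenform of parallel weight two. For $v\in\Sigma$ we have $\tau_v$ either irreducible or scalar. If $\tau_v$ is irreducible, then $\rec_p(\pi_v\otimes|\cdot|^{-1/2})|_{I_{F_v}}\cong\tau_v$ is irreducible, which forces $\pi_v$ to be supercuspidal, hence discrete series. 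If $\tau_v$ is scalar, then by construction $R_v$ factors through $R_{\rhobar|_{G_{F_v}}}^{\square,\tau_v,\psi,*}$, whose $\Qpbar$-points are exactly those lifts that are potentially unipotent but not potentially unramified; by local-global compatibility this means the associated Weil--Deligne representation has non-trivial monodromy, so $\pi_v$ is a twist of Steinberg and hence discrete series.

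Having checked discrete series at all the required places, Jacquet--Langlands produces an automorphic representation $\pi^D$ of $D^\times(\A_F)$ whose Galois representation is again $\rho$, and whose local components away from $\Sigma\cup\{\infty\}$ agree with those of $\pi$. Multiplicity one for $D^\times$ (and our choice of the Hecke type $\sigma(\tau_v)=\sigma_{D_v}(\tau_v)$ at $v\in\Sigma$ via the recipe of \S\ref{nonsplittypes}) guarantees that $(\pi^D)^\infty$ contributes to $S_\tau(U,E)$: indeed, at each finite $v\notin\{v_1\}$, $(\pi^D_v)^\vee|_{U_v}$ contains $\sigma(\tau_v)^\vee$ with multiplicity one when $v\in S$ by the defining property of $\sigma(\tau_v)$ (Theorem \ref{thm: Henniart existence of types} at $v\mid p$, and \S\ref{nonsplittypes} at $v\in\Sigma$), and $(\pi^D_v)^{U_v}\ne 0$ at the remaining $v$ by unramifiedness. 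At $v_1$, the choice of $U_{v_1}$ and the fact that $\rho$ is unramified at $v_1$ with eigenvalue ratio $\ne (\mathbf{N}v_1)^{\pm 1}$ guarantee a non-zero fixed vector. Thus $(\pi^D)^\infty$ contributes a non-zero Hecke eigenclass to $S_\tau(U,E)$, and after clearing denominators we obtain an eigenform in $S_\tau(U,\O)$ whose associated Galois representation is $\rho$, as required.

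The main obstacle is the verification at $v\in\Sigma$ with $\tau_v$ scalar, namely that the starred quotient $R_{\rhobar|_{G_{F_v}}}^{\square,\tau_v,\psi,*}$ really corresponds to lifts with non-trivial monodromy (so that the local automorphic factor is Steinberg-type rather than principal series). This uses the explicit description of the components of the scalar-type deformation rings cited from Theorems 4.1.3 and 4.1.5 of \cite{pillonidef} in the preceding paragraphs, combined with local-global compatibility for Hilbert modular forms at $\ell\ne p$; once this is granted, the argument is a standard application of Jacquet--Langlands together with the multiplicity-one statements for $\sigma(\tau_v)$ already recalled.
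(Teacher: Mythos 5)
Your argument is correct and follows essentially the same route as the paper: apply Lemma \ref{lem: existence of lifts of prescribed type for HMFs} to produce a modular lift with the prescribed local components (noting that components of the starred quotients are components of the unstarred rings), observe that at $v\in\Sigma$ the type being irreducible or scalar forces $\rho|_{G_{F_v}}$ to be irreducible or to have non-trivial monodromy, conclude via local--global compatibility that the corresponding parallel weight $2$ form is discrete series at $\Sigma$, and transfer to $D$ by Jacquet--Langlands. The paper compresses the final step into ``hence transfers to $D$,'' whereas you spell out why the transfer lands in $S_\tau(U,\O)_{\m}$ via the multiplicity-one properties of the $\sigma(\tau_v)$; this is a welcome elaboration, not a different method.
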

\begin{proof} By Lemma \ref{lem: existence of lifts of prescribed type for HMFs} 
(note that we are already assuming the hypotheses of that lemma are satisfied), there exists 
a $\rho$ with the first three properties, which is modular in the sense that it arises from a Hilbert 
modular form $\pi.$ The conditions on $\rho$ at $v|p$ imply that this form is parallel of weight $2.$

We now check that $\pi$ is discrete series at all $v \in \Sigma,$ so that $\pi$ transfers to an automorphic form $\pi^D$ on $D.$ 
Suppose that $\pi_v$ is not discrete series for some $v \in \Sigma.$ 
Since we are assuming that $\tau_v$ is either scalar or irreducible, $\pi_v$ must be a twist of 
an unramified principal series. By local-global compatibility, this implies that a twist of 
$\rho|_{G_{F_v}}$ is unramified and pure (i.e satisfies the Ramanujan conjecture) by~\cite{MR2327298} Thm 1. 
But, this contradicts the assumption that $\rho|_{G_{F_v}}$ arises from a point of 
$\Spec R_{\rhobar|_{G_{F_v}}}^{\square,\tau_v,\psi,*}$ (cf.\ the description in \ref{par: def ring setup}).

Finally local-global compatibility for $D$ implies that $\pi^D$ corresponds to an eigenform in  $S_{\tau}(U,\O).$
\end{proof}

\subsection{Patching}
\begin{para} 
Let $R^{\square,\psi}_{F,S}$ denote the
complete local $\cO$-algebra which pro-represents the functor which
assigns to a local Artinian $\cO$-algebra $A$ the set of equivalence
classes of tuples $(\rho,\{\alpha_v\}_{v\in S})$ where $\rho$ is a
lifting of $\rhobar$ as a $G_{F,S}$-representation with
determinant $\psi\varepsilon^{-1}$,
$\alpha_v\in\ker(\GL_2(A)\to\GL_2(\F))$, and two such tuples
$(\rho,\{\alpha_v\}_{v\in S})$ and $(\rho',\{\alpha'_v\}_{v\in S})$
are equivalent if there is an element
$\beta\in\ker(\GL_2(A)\to\GL_2(\F))$ with $\rho'=\beta\rho\beta^{-1}$
and $\alpha'_v=\beta\alpha_v$ for all $v\in S$, so that
$R^{\square,\psi}_{F,S}$ is naturally an $R_S^\psi$-algebra. We
define $R^{\square,\tau,\psi}_{F,S}=R^{\square,\psi}_{F,S}\otimes_{R_S^\psi}R^{\tau,\psi}_S$. We also have the corresponding
universal deformation ring $R^{\tau,\psi}_{F,S},$ defined as the image of $R^{\psi}_{F,S}$ 
in $R^{\square,\tau,\psi}_{F,S}.$

Choose a lift $\rho_S^{\univ}:G_{F,S}\to\GL_2(R_{F,S}^{\tau,\psi})$
representing the universal deformation. Fix a place $w\in S$, and let
\[ \mc{T} = \mc{O}[[X_{v,i,j}:v \in S, i,j = 1,2]]/(X_{w,1,1}).\] The
tuple $(\rho^{\univ}_{S},(1_2+X_{v,i,j})_{v \in S})$ induces an
isomorphism $R^{\square,\tau,\psi}_{F,S} \isoto
R^{\tau,\psi}_{F,S}\widehat{\otimes}_{\mc{O}} \mc{T}.$ 
(Note that $1_2+X_{v,i,j}$ is a $2\times 2$ matrix, and the
fact that $X_{w,1,1}=0$ in $\cT$ implies that this tuple has no
non-trivial scalar endomorphisms.) 

We let $M=
S_{\tau}(U,\mc{O})_{\mf{m}}$. 
Fix a filtration by $\F$-subspaces \[0=L_0\subset
L_1\subset\dots\subset L_s=(\otimes_{v|p,\cO}L_{\tau_v})\otimes_\cO \F\]such that each
$L_i$ is $\prod_{v|p}U_v$-stable, and for each $i=0,1,\dots,s-1$,
$\sigma_i:=L_{i+1}/L_i$ is absolutely irreducible. This in turn
induces a filtration 
\[0=M^{0}\subset
M^{1}\subset\dots\subset
M^{s}=M^{}\otimes_\cO
\F.\]
on $M^{}\otimes_\cO\F,$ whose
graded pieces are the finite-dimensional $\F$-vector spaces
$S_{\tau^p}(U,\sigma_i)_\mf{m}$. 

Let $q \geq [F:\Q]$ be an integer, and set $g=q-[F:\Q]+|S|-1,$ 
\begin{eqnarray*}
  \Delta_{\infty} & = & \bb{Z}_p^q, \\
  R_{\infty} & = & R^{\tau,\psi}_S[[x_1,\ldots,x_g]], \\
  R'_{\infty} & = & R^{\psi}_S[[x_1,\ldots,x_g]], \\
  S_{\infty} & = & \mc{T}[[\Delta_{\infty}]], 
\end{eqnarray*}
and let $\mf{a}$ denote the kernel of the $\mc{O}$-algebra homomorphism $S_{\infty}
\rightarrow \mc{O}$ which sends each $X_{v,i,j}$ to 0 and each element
of $\Delta_{\infty}$ to 1. Note that $S_\infty$ is formally smooth
over $\bigO$ of relative dimension $q+4|S|-1$ and that $R_\infty$ also
has relative dimension $q+4|S|-1$ over $\cO.$ 

Now a patching argument as in Section \ref{subsec:patching} shows for some $q \geq [F:\Q],$ there exists 
\begin{itemize}
\item  an $\cO$-module homomorphism $S_\infty\to R_\infty$, and an
  $R_\infty$-module $M_\infty$ which is finite free as an $S_{\infty}$-module,
\item a filtration by $R_{\infty}$-modules 
\[0=M_{\infty}^{0}\subset M_{\infty}^{1}\subset\dots\subset M_{\infty}^{s}=M_{\infty}^{}\otimes \F\] 
whose graded pieces are finite free $S_{\infty}/\pi S_{\infty}$-modules,
\item a surjection of $R_S^{\tau,\psi}$-algebras $R_{\infty}/\mf{a}R_{\infty}
  \rightarrow R^{\tau,\psi}_{F,S}$, and
\item an isomorphism of $R_{\infty}$-modules $M_{\infty}/\mf{a}M_{\infty} \iso M$ which identifies 
 $M^i$ with $M^i_{\infty}/\mf{a}M^i_{\infty}.$
\end{itemize}

As in Section \ref{subsec:patching}, we may and do assume that for
$i=1,2,\dots s,$ the $(R'_{\infty},S_{\infty})$-bimodule
$M^i_{\infty}/M^{i-1}_\infty$ and the isomorphism 
$M^i_{\infty}/(\mf{a}M^i_{\infty}+M^{i-1}_\infty) \iso M^i/M^{i-1}$
depends only on ($U,\mf{m}$, the types $\tau_v$ for $v\nmid p$ and)
the isomorphism class of $L_i/L_{i-1}$ as a
$\prod_{v|p}U_v$-representation, but not on the choice of types
$\tau_v$ for $v|p$. 
We denote this $(R'_{\infty},S_{\infty})$-bimodule 
by $M^{\sigma}_{\infty},$ where $\sigma \iso L_i/L_{i-1}.$
\end{para}

\begin{para} Assume that for $v\in S$, $v\nmid p$ the $\tau_v$ and the set $\Sigma'$ have been chosen
such that $R_{\rhobar|_{G_{F_v}}}^{\square,\tau_v,\psi,*}\ne 0$, and
that for $v|p$ we have
$\det\tau_v=\widetilde{\varepsilon\det\rhobar|_{G_{F_v}}}.$ 
We set 
$$ \mu_\sigma'(\rhobar) = \frac{1}{2^m\prod_{v\in S,v\nmid
    p}e(R_{\rhobar|_{G_{F_v}}}^{\square,\tau_v,\psi,*}/\pi)}
e_{R_{\infty}/\pi}(M^\sigma_\infty).$$
We need the following variant of Lemma \ref{key patching lemma}.
\end{para}

\begin{lem}\label{key patching lemma II} We have 
\begin{enumerate}
\item The support of $M$ meets every irreducible component of $\Spec R^{\tau,\psi}_S[1/p].$
\item $M_{\infty}\otimes_{\Z_p}\Q_p$ is a finite $R_{\infty}[1/p]$-module, which 
which is locally free of rank $2^m$ over the formally smooth locus in 
$\Spec R_{\infty}[1/p].$
\item 
$R^{\tau,\psi}_{F,S}$ is a finite $\O$-algebra and $M\otimes_{\Z_p}\Q_p$ is a faithful $R^{\tau,\psi}_{F,S}$-module.  
\item 
\[e(R_{\infty}/\pi R_{\infty}) =  \sum_{i=1}^s 2^{-m}e_{R_{\infty}/\pi}(M^{\sigma_i}_{\infty}) \]
where $\sigma_i$ is a global Serre weight with $L_i/L_{i-1} \iso \sigma_i.$ 
\end{enumerate}
\end{lem}
\begin{proof} (1) follows from Lemma \ref{lemma:existenceofquaternionicforms}. 

For (2), note that $M_{\infty}$ is a finite $R_{\infty}$-module, since it is a finite $S_{\infty}$-module, 
and the same argument as in  Lemma~\ref{key patching lemma} shows that $M_{\infty}$ is locally 
free over the formally smooth locus of $\Spec R_{\infty}[1/p].$ Thus to show (2), we have to show that 
any irreducible component of $Z \subset \Spec R_{\infty}[1/p]$ contains a closed point in the smooth locus 
of $\Spec R_{\infty}[1/p]$ at which $M_{\infty}$ has rank $2^m.$ 

By (1), there exists $\mathfrak p \in Z$ in the support of $M.$ We claim that the image of 
$\mathfrak p$ is in the smooth locus of $\Spec R^{\tau,\psi}_S[1/p].$ Assuming this, we see that 
$\mathfrak p$ is in the smooth locus of $\Spec R_{\infty}[1/p],$ and in particular $M_{\infty}$ is locally free (of positive rank) at $\mathfrak p.$ 
Hence the maps 
$$ R_{\infty}/\mf{a}R_{\infty}[1/p] \rightarrow R^{\tau,\psi}_{F,S}[1/p] \rightarrow \T_\tau^S(U,\cO)_\m[1/p]$$
become isomorphisms after localising at $\mathfrak p.$ Since $M\otimes \Q_p$ has rank $2^m$ over 
$\T_\tau^S(U,\cO)_\m[1/p]$ it follows that $M_{\infty}$ has rank $2^m$ at $\mathfrak p.$

By the description of $R_{\rhobar|_{G_{F_v}}}^{\square,\tau_v,\psi,*}$ given in \ref{par: def ring setup}, to show that the image of
$\mathfrak p$ is in the smooth locus of $\Spec R^{\tau,\psi}_S[1/p],$ it suffices to show that if $v \in S\backslash \Sigma,$ $v\nmid p,$ and 
$\tau_v$ is scalar, then the image of $\mathfrak p$ lies on exactly one irreducible component of $\Spec R^{\square,\tau_v,\psi}_{\rhobar|_{G_{F_v}}}[1/p].$
Let $\rho_{\mathfrak p}$ denote the representation of $G_{F,S}$ corresponding to the image of $\mathfrak p$ in $\Spec R^{\tau,\psi}_{F,S}.$
Since $\mathfrak p$ is in the support of $M,$ it corresponds to an automorphic representation $\pi$ of $D^\times,$ and $\pi_v$ 
is a twist either of an unramified principal series, or of a Steinberg representation.  
In the former case, one sees as in the proof of Lemma \ref{lemma:existenceofquaternionicforms} that $\rho_{\mathfrak p}|_{G_{F_v}}$ 
is the twist of an unramified representation which satisfies the Ramanujan conjecture. So $\mathfrak p$ is contained in a single component of  
$\Spec R^{\square,\tau_v,\psi}_{\rhobar|_{G_{F_v}}}[1/p],$ and this component parameterizes potentially unramified representations. 
When $\pi_v$ is a twist of a Steinberg representation, then it follows from \cite{carayol86}, Thm~A, that the image of $\rho_{\mathfrak p}|_{I_{F_v}}$ 
is infinite, and again $\mathfrak p$ lies on exactly one component of $\Spec R^{\square,\tau_v,\psi}_{\rhobar|_{G_{F_v}}}[1/p].$

This proves (2) and also shows that $M\otimes \Q_p$ is a finite free $R_{\infty}/\mf{a}R_{\infty}[1/p]$-module of rank $2^m.$ 
In particular, the map $R_{\infty}/\mf{a}R_{\infty}[1/p] \rightarrow R^{\tau,\psi}_{F,S}[1/p]$ is an isomorphism, so $M\otimes \Q_p$
is a finite free $R^{\tau,\psi}_{F,S}[1/p]$-module, which proves (3).

Finally, by (2) $M_{\infty}$ is locally free of rank $2^m$ at the generic points of $R_{\infty}.$ By \cite{kisinfmc} Lemma 1.3.4. this implies that 
$e(R_{\infty}/\pi) = 2^{-m} e_{R_{\infty}/\pi}(M_{\infty}/\pi M_{\infty}),$ and (4) follows as in the proof of Lemma \ref{key patching lemma}.
\end{proof}

\begin{prop}\label{key patching prop for BDJ} For each global Serre weight $\sigma,$ we have
  $\mu'_\sigma(\rhobar)=\prod_{v|p}\mu_{\sigma_v}(\rhobar|_{G_{F_v}})$,
  where $\sigma \cong\otimes_{v|p}\sigma_v$, and the
  $\mu_{\sigma_v}(\rhobar|_{G_{F_v}})$ are as in Theorem~\ref{thm: local BM for pot diag}.
In particular $\mu'_\sigma(\rhobar)$ is a non-negative integer.

\end{prop}
\begin{proof} By Lemma \ref{key patching lemma II} we have 
    \[e(R_{\infty}/\pi R_{\infty}) = 2^{-m}\sum_{i=1}^s e_{R_{\infty}/\pi}(M^{\sigma_i}_{\infty})\] where $\sigma_i$ is a global Serre weight with
    $L_i/L_{i-1} \iso \sigma_{i}.$ 
Since 
$$e(R_\infty/\pi R_\infty)=e(R_S^{\tau,\psi}/\pi)=\prod_{v|p}e(R_{\rhobar|_{G_{F_v}}}^{\square,0,\tau_v,\psi}/\pi)\prod_{v\in
  S,v\nmid p}e(R_{\rhobar|_{G_{F_v}}}^{\square,\tau_v,\psi,*}/\pi),$$
this
yields \[\prod_{v|p}e(R_{\rhobar|_{G_{F_v}}}^{\square,0,\tau_v,\psi}/\pi)=\sum_{i=1}^s\mu'_{\sigma_i}(\rhobar),\]which
by Remark \ref{rem: fixing det doesn't change e} is equivalent
to \[\prod_{v|p}e(R_{\rhobar|_{G_{F_v}}}^{\square,0,\tau_v}/\pi)=\sum_{i=1}^s\mu'_{\sigma_i}(\rhobar).\]
Since the $\sigma_i$ are precisely the Jordan--H\"older factors of
$(\otimes_{v|p,\cO}L_{\tau_v})\otimes_\cO\F$, we see that this is
equivalent
to \[\prod_{v|p}e(R_{\rhobar|_{G_{F_v}}}^{\square,0,\tau_v}/\pi)=\sum_{\{\sigma_v\}_{v|p}}
(\prod_{v|p} n_{0,\tau_v}(\sigma_v))\mu'_{\sigma}(\rhobar) ,\] where the
sum runs over tuples $\{\sigma_v\}_{v|p}$ of equivalence classes of
Serre weights, and $\sigma \cong \otimes_{v|p} \sigma_v.$ By the above
construction, these equations hold for all choices of types $\tau_v$
with $\det\tau_v=\widetilde{\varepsilon\det\rhobar|_{I_{F_v}}}$. 

In fact, we claim that these equations hold for all choices of types
$\tau_v$ with tame determinant. In order to see this, suppose that for
some $v$, $\det\tau_v$ is tame, but $\det\tau_v \neq \widetilde{\varepsilon\det\rhobar|_{I_{F_v}}}$ and so 
$\det\taubar_v\ne\varepsilonbar\det\rhobar|_{I_{F_v}}$. Then
$R_{\rhobar|_{I_{F_v}}}^{\square,0,\tau_v}=0$, so it suffices to prove
that $\mu'_{\sigma}(\rhobar)=0$ whenever $n_{0,\tau_v}(\sigma_v)\ne 0$ for all $v\mid p$ 
(as then the equation will collapse to $0=0$). Equivalently, we must show
that $M^\sigma_\infty=0$ if $n_{0,\tau_v}(\sigma_v)\ne 0$; but as in Corollary 
\ref{remark: all multiplicities are 0 unless det condition holds},
this is an easy consequence of local-global compatibility and the
assumption that $\det\taubar_v\ne\varepsilonbar\det\rhobar|_{I_{F_v}}$,
since if $M^\sigma_\infty\ne 0$ then $M^\sigma\ne 0$.

    Now, by Lemma \ref{lem: linear algebra lemma}(1) and Lemma \ref{lem: pot
      BT equations are well determined}, we see that
    the quantities $\mu'_\sigma(\rhobar)$ are uniquely determined by these
    equations as the $\tau_v$ run over all types with tame determinant. However, by Corollary \ref{cor: main result for pot BT}
    we see that setting
    $\mu'_\sigma(\rhobar)=\prod_{v|p}\mu_{\sigma_v}(\rhobar|_{G_{F_v}})$
    gives a solution to the equations, so we must have
    $\mu'_\sigma(\rhobar)=\prod_{v|p}\mu_{\sigma_v}(\rhobar|_{G_{F_v}})$,
    as required.
\end{proof}

The following corollary is the main result we need to apply
our techniques to the weight part of Serre's conjecture.
\begin{cor}
  \label{cor: Serre weights in the setting of the patching
    argument}With the above notation and assumptions, let
  $\sigma=\otimes_{v|p}\sigma_v$ be a global Serre weight. Then we
  have $S_{\tau^p}(U,\sigma)_\m\ne 0$ if and only if
  $\sigma_v\in\WBT(\rhobar|_{G_{F_v}})$ for all places $v|p$.
\end{cor}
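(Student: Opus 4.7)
The plan is to identify $S_{\tau^p}(U,\sigma)_\m$ with the reduction modulo $\mathfrak{a}$ of the patched module $M^\sigma_\infty$, then combine this with Proposition \ref{key patching prop for BDJ} and Definition \ref{defn: WBT}. First, I would dispose of the case when the central character of some $\sigma_v$ does not match $\Art_{F_v}^{-1}\circ(\varepsilonbar\det\rhobar|_{G_{F_v}})$ on $\cO_{F_v}^\times$: in this situation $S_{\tau^p}(U,\sigma)_\m=0$ by local-global compatibility applied to the action of the centre of $(\cO_D)_v^\times$ (which is forced through $\psi\circ\Art_F$), while $\mu_{\sigma_v}(\rhobar|_{G_{F_v}})=0$ by Remark \ref{remark: all multiplicities are 0 unless det condition holds}, so both sides of the biconditional vanish.

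In the remaining case, for each $v|p$ I would choose an inertial type $\tau_v$ with $\det\tau_v=\widetilde{\varepsilonbar\det\rhobar|_{G_{F_v}}}$ such that $\sigma_v$ appears as a Jordan--H\"older factor of $\sigma(\tau_v)\otimes_\cO\F$. The existence of such a $\tau_v$ (for example, a tame principal-series type built from suitable Teichm\"uller lifts of the characters of $\sigma_v$) follows from the explicit description of the reductions of Henniart types, in the same spirit as the construction used in the proof of Lemma \ref{lem: pot BT equations are well determined}. With these choices, $\sigma$ occurs as some graded piece $L_i/L_{i-1}$ of the filtration used to define the patching, and the construction of $M^\sigma_\infty$ then supplies a canonical identification
\[ M^\sigma_\infty/\mathfrak{a}M^\sigma_\infty \isoto M^i/M^{i-1} \cong S_{\tau^p}(U,\sigma)_\m. \]
Since $M^\sigma_\infty$ is finite free over $S_\infty/\pi S_\infty$ and $\mathfrak{a}$ lies in the maximal ideal of $S_\infty$, Nakayama's lemma forces $M^\sigma_\infty$ to vanish exactly when its reduction modulo $\mathfrak{a}$ does. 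Because $\mu'_\sigma(\rhobar)$ is a positive rational multiple of the Hilbert--Samuel multiplicity $e_{R_\infty/\pi}(M^\sigma_\infty)$, which is strictly positive on any nonzero finitely generated module, this yields
\[ S_{\tau^p}(U,\sigma)_\m \ne 0 \iff M^\sigma_\infty \ne 0 \iff \mu'_\sigma(\rhobar)>0. \]

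To finish, Proposition \ref{key patching prop for BDJ} supplies the factorisation $\mu'_\sigma(\rhobar)=\prod_{v|p}\mu_{\sigma_v}(\rhobar|_{G_{F_v}})$, so positivity of $\mu'_\sigma(\rhobar)$ is equivalent to positivity of every $\mu_{\sigma_v}(\rhobar|_{G_{F_v}})$, which by Definition \ref{defn: WBT} means $\sigma_v\in\WBT(\rhobar|_{G_{F_v}})$ for every $v|p$. The main technical point in this plan is the choice of the $\tau_v$ at primes above $p$: one must realise an arbitrary local Serre weight $\sigma_v$ with the prescribed central character as a Jordan--H\"older constituent of the reduction of a Henniart type whose determinant is also prescribed. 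This is a book-keeping exercise in the explicit theory of tame types, essentially already carried out in the proof of Lemma \ref{lem: pot BT equations are well determined}, and is the only genuinely new local input required beyond the machinery developed above.
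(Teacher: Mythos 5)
Your proof is correct and takes essentially the same route as the paper's: identify $S_{\tau^p}(U,\sigma)_\m$ with $M^\sigma_\infty/\mf{a}M^\sigma_\infty$, use freeness over $S_\infty$ to see that $M^\sigma_\infty$ vanishes exactly when this reduction does, and invoke the factorisation $\mu'_\sigma(\rhobar)=\prod_{v|p}\mu_{\sigma_v}(\rhobar|_{G_{F_v}})$ of Proposition~\ref{key patching prop for BDJ}. The details you add --- realising each $\sigma_v$ in the reduction of a type $\tau_v$ with the prescribed determinant, and the central-character reduction --- are exactly what the paper leaves implicit in ``with the above notation and assumptions''.
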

\begin{proof}
  By Proposition \ref{key patching prop for BDJ}, we see that
  $\sigma_v\in\WBT(\rhobar|_{G_{F_v}})$ for all places $v|p$ if and
  only if $e_{R_\infty/\pi}(M_\infty^\sigma)\ne 0$. Since (by the patching construction) $M_\infty^\sigma$ is
  maximal Cohen--Macaulay over $R_\infty/\pi$, we see from Theorem 13.4
  of~\cite{MR1011461} that $e_{R_\infty/\pi}(M_\infty^\sigma)\ne 0$ if and only if $M^\sigma_\infty\ne 0$; but
  $M^\sigma_\infty\ne 0$ if and only if $M^\sigma\ne 0$, and
  $M^\sigma=S_{\tau^p}(U,\sigma)_\m$ by definition.
\end{proof}
\subsection{Proof of the BDJ conjecture} We are now ready to prove Theorem \ref{thm: intro version of
  BDJ}, which amounts to translating Corollary \ref{cor: Serre weights in the setting of the patching
    argument} into the original formulation of the BDJ conjecture.

\begin{para} 
Let $D$ and $\psi$ be as above and, as usual, denote by $\Sigma$
the set of finite places where $D$ is ramified. Let $V=(\prod_{v|p}\GL_2(\cO_{F_v}))V^p$ with
$V^p$ a compact open subgroup of $(D\otimes_\Q\A^{\infty,p})^\times.$ 
We assume that $\psi\circ \det|_{V^p}$ is trivial. 
Let $\sigma=\otimes_{v|p}\sigma_v$ be a global Serre weight for $D.$
We assume that $\sigma$ is compatible with $\psi$ as above, and extend $\sigma$ to a
representation of $V(\A_F^\infty)^\times.$ Let $\tau^p_0$ denote the trivial $\O$-representation 
of $V^p.$ We write $S(V,\sigma) = S_{\tau^p_0}(V,\sigma),$ and we remind the reader that 
the definition of $S(V,\sigma)$ depends on $\psi.$

Let $T$ be a finite set of finite places of $F,$ containing the set of places where 
$D,$ $\rhobar$ or $\psi$ are ramified and all the places dividing $p,$ and such that 
$(\O_D)_v^\times \subset V$ for $v \notin T.$  
We again denote by $ \m \subset \T^{T,\univ}$ the maximal ideal
associated to $\rhobar,$ as above.
The Hecke algebra $\T^{T,\univ}$ acts on $S(V,\sigma)$ and we may consider the 
localization $S(V,\sigma)_{\m}.$ Note that $S(V,\sigma)_{\m}$ does not depend on the choice 
of the set $T$ satisfying the above conditions. Thus, we may write $S(V,\sigma)_{\m}$ without 
making a choice of $T.$
\end{para}

\begin{defn}
  \label{defn: modular of some weight}We say that $\rhobar$ is modular
  for $D$ of weight $\sigma,$ if for some $V$ as above we have
  $S(V,\sigma)_\m\ne 0$.
\end{defn}

\begin{defn}
We say that $\rhobar$ is \emph{compatible with D} if for all $v \in \Sigma$, 
$\rhobar|_{G_{F_v}}$ is either irreducible, or is a twist of an
extension of the trivial character by the cyclotomic character.
\end{defn}

\begin{cor}
  \label{cor: main BDJ theorem} Let $p>2$ be
  prime, let $F$ be a totally real field, and let
  $\rhobar:G_F\to\GL_2(\Fpbar)$ be a continuous representation. Assume
  that $\rhobar$ is modular, that $\rhobar|_{G_{F(\zeta_p)}}$ is
  irreducible, and if $p=5$ assume further that the projective image
  of $\rhobar|_{G_{F(\zeta_p)}}$ is not isomorphic to $A_5$.

  For each place $v|p$ of $F$ with residue field $k_v$, let $\sigma_v$
  be a Serre weight of $\GL_2(k_v)$. Then $\rhobar$ is modular for $D$ of 
  weight $\sigma = \otimes_{v|p}\sigma_v$ if and only if $\rhobar$ is compatible with $D$ and $\sigma_v\in\WBT(\rhobar|_{G_{F_v}})$ for all $v|p$.
\end{cor}
\begin{proof} 
Suppose firstly that $\rhobar$ is compatible with $D$ and $\sigma_v\in\WBT(\rhobar|_{G_{F_v}})$  for all $v|p.$ 
Let $S$ be a set of primes containing all the primes dividing $v|p,$ and all primes where $\rhobar, \bar\psi$ or $D$ ramify. 
We take the set $\Sigma' \subset S$ to be empty. Since $\rhobar$ is compatible with $D,$ 
for each $v \in \Sigma,$ $\rhobar|_{G_{F_v}}$ has a lift of discrete series type, as in the proof of Corollaire
3.2.3 of \cite{breuildiamond}, and we can choose our types $\tau_v$ for $v \in S,$ $v\nmid p$ such that
  $R_{\rhobar|_{G_{F_v}}}^{\square,\tau_v,\psi,*}\ne 0.$ 

Choose a compact open subgroup $U \subset (D\otimes_\Q\A^\infty)^\times,$ and the 
lattices $L_{\tau_v} \subset \sigma(\tau_v)$ as in \ref{subsec:modularforms} above.
Let $V = (\prod_{v|p}\GL_2(\cO_{F_v}))V^p \subset U$ be a normal subgroup  such that 
$V$ acts trivially on $\sigma(\tau_v)$ for all $v \in S,$ $v \nmid p,$ and $\psi\circ\det|_{V^p}$ is trivial. 
Then 
\setcounter{equation}{4}
\begin{equation}\label{eqn:invariants}
S_{\tau^p}(U,\sigma)_{\m} = S_{\tau^p}(V,\sigma)_{\m}^{U/V} = (S(V,\sigma)_{\m}\otimes_{\O}L_{\tau^p})^{U/V}.
\end{equation}
Since $\sigma_v\in\WBT(\rhobar|_{G_{F_v}})$  for all $v|p,$ 
we have $S_{\tau^p}(U,\sigma)_\m\ne 0$ by Corollary 
\ref{cor: Serre weights in the setting of the patching argument}, and
$S(V,\sigma)_{\m}\neq 0,$ as required.

Conversely, suppose that $S(V,\sigma)_{\m}\neq 0$ for some $V.$ 
Let $S$ be a set of primes containing the primes $v|p,$ the primes where $\rhobar, \bar\psi$ or $D$ ramify, 
and the primes $v$ such that $(\O_D)_v^\times$ is not contained in $V.$ 
Let $S^p = \{v \in S: v\nmid p\}.$
Write $V^{S^p} = \prod_{v \notin S^p} V_v,$ and set 
$$ Y = \varinjlim_{W_{S^p}} S(W_{S^p}V^{S^p},\sigma)_{\m}\otimes_{\mathbb F} \bar{\mathbb F}_p$$
where $W_{S^p}$ runs over compact open subgroups of $\prod_{v \in S^p} V_v.$
Then $Y$ is naturally a smooth $\bar{\mathbb F}_p$-representation of the group $D_{S^p}^\times = \prod_{v \in S^p} D_v^\times.$ 
Note that $Y \neq 0$ as $S(V,\sigma)_{\m}\neq 0.$
Let $\bar \pi \subset Y$ be an irreducible subrepresentation. Then $\bar \pi = \otimes_{v \in S^p} \bar \pi_v$ 
where each $\bar \pi_v$ is an irreducible, admissible, smooth representation of $D_v^\times.$ 
Let $\Sigma' \subset S^p$ be the subset where $D$ is unramified at $v$ and $\bar\pi_v$ is finite dimensional. 

We now define the $(\O_D)_v^\times$-representations $L_{\tau_v},$ for $v \in S^p,$ used in the definition of our spaces of modular forms.  
If either $\bar\pi_v$ is infinite dimensional or $D$ is ramified at $v,$ then we take $\tau_v$ and $L_{\tau_v}$ to be the type 
and $(\O_D)_v^\times$-representation produced by applying Lemmas 
\ref{lem: mod p homs to types vanish implies zero} and \ref{lem: mod p homs to types vanish implies zero quaternion algebra version} 
respectively to the representation $\bar \pi_v.$
If $D$ is unramified at $v,$ and $\bar\pi_v$ is finite dimensional then, by \cite{MR1026328} Thm.~1.1,
$\bar\pi_v$ has the form $\bar\chi_v\circ\det,$ where $\bar\chi_v: F_v^\times \rightarrow \bar\F_p^\times$ is a continuous character. 
Since $\bar\chi_v$ has finite image, we may lift $\bar\chi_v|_{\O_{F_v}^\times}$ to a character $\chi_v: \O_{F_v}^\times \rightarrow \bar \Z_p^
\times.$ 
We take $\tau_v$ the $2$-dimensional scalar representation given by $\chi_v \circ \Art_{F_v}^{-1},$ 
and $L_{\tau_v} = \O$ with $(\O_D)_v^\times$ acting via $\chi_v^{-1}\circ\det.$
Since $Y$ has central character $\bar\psi,$ so does $\bar\pi$ and each $\bar\pi_v$ for $v \in S^p.$ 
Thus $\det \tau_v$ reduces to $\bar\psi$ mod $p.$ Since $p\neq 2,$ after replacing each $\tau_v$ by a twist 
by a character which has trivial reduction, we may assume that $\det\tau_v = \psi|_{I_{F_v}}$ for $v \in S^p.$
We take $\Sigma' \subset S^p$ to be the set of primes where $D$ is unramified and $\bar\pi_v$ is finite dimensional.

Set $\O_{D_{S^p}^\times} = \prod_{v\in S^p} (\O_D)_v^\times.$ By construction, $(Y\otimes L_{\tau^p})^{\O_{D_{S^p}^\times}} \neq 0.$ 
Hence there is an open normal subgroup $W_{S^p} \subset \O_{D_{S^p}^\times},$ such that 
$(S(W_{S^p}V^{S^p}, \sigma)_{\m}\otimes L_{\tau^p})^{\O_{D_{S^p}^\times}} \neq 0.$
Let $U = \prod_v U_v$ be as in \ref{subsec:modularforms}, so in particular $U_v \subset (\O_D)_v^\times.$  
By what we just saw, there exists an normal open subgroup $W = (\prod_{v|p}\GL_2(\cO_{F_v}))W^p \subset U$ 
such that $W$ acts trivially on $\sigma(\tau_v)$ for $v \in S^p\backslash\Sigma',$ $\tau_v\circ\Art_{F_v}$ 
is trivial on $W$ if $v \in \Sigma',$ and $\psi\circ\det|_{W^p}$ is trivial, and such that 
$(S(W, \sigma)_{\m}\otimes L_{\tau^p})^{U/W} \neq 0.$   
Then $S_{\tau^p}(U,\sigma)_{\m} \neq 0$ by (\ref{eqn:invariants}), applied with $W$ in place of $V.$
This implies that $\sigma_v\in\WBT(\rhobar|_{G_{F_v}})$  for all $v|p,$ 
by Corollary \ref{cor: Serre weights in the setting of the patching argument}. 

It remains to see that $\bar\rho$ is compatible with $D.$ Choose a representation of $\prod_{v\mid p} (\O_D)_v^\times$ 
on a finite free $\O$-module $\theta$ such that $\sigma$ is a Jordan-H\"older factor of $\theta\otimes_{\O}\mathbb F.$
Then $S_{\tau^p}(U,\sigma)_{\m} \neq 0$ implies $S_{\tau^p}(U,\theta)_{\m} \neq 0.$ This implies that for $v \in \Sigma,$ $\bar\rho|_{G_{F_v}}$ has a lift 
of discrete series type, and hence that $\bar\rho$ is compatible with $D,$ as in the proof of Corollaire 3.2.3 of \cite{breuildiamond}. 
\end{proof}
\setcounter{thm}{5}
\begin{remark}
  The above arguments strongly suggest that there should be versions
  of the Breuil--M\'ezard conjecture for quaternion algebras, and also
  for mod $l$ representations of the absolute Galois groups of
  $p$-adic fields, with $l\ne p$. The first problem is considered in~\cite{1309.0019}, and the
  second problem in~\cite{1309.1600}.
\end{remark}
Following \cite{bdj} we say that $\bar\rho$ is modular of weight $\sigma$ if 
there exists a quaternion algebra $D$ over $F$ which is indefinite at a single prime $v_0|\infty$ 
and split at all primes dividing $p,$ and such that $\rhobar$ is modular for $D$ of weight $\sigma.$ 
(In the definition of \cite{bdj} the prime $v_0$ is fixed. However, 
it follows from the Jacquet-Langlands correspondence that the condition does not in fact depend on the choice of $v_0$).
As a consequence of Corollary \ref{cor: main BDJ theorem} we have the following result on the BDJ conjecture 
as formulated in \cite{bdj} Conj.~3.14. 

\begin{cor}(Theorem \ref{thm: intro version of BDJ}) 
Let $p,F,\rhobar$ and $\sigma$ be as in \ref{cor: main BDJ theorem}. Then $\rhobar$ is modular of weight $\sigma$ 
if and only if $\sigma_v\in\WBT(\rhobar|_{G_{F_v}})$ for all $v|p$.
\end{cor}
\begin{proof} The ``only if''' direction follows immediately from Corollary \ref{cor: main BDJ theorem}, which also implies the 
converse once we show that there is a $D$ indefinite at a single prime $v_0|\infty$ 
and split at all primes dividing $p$ such that $\rhobar$ is compatible with $D.$ 
For this, we may take $D$ to be a quaternion algebra ramified at one infinite prime, and at one 
finite prime $v,$ such that $\rhobar(\Frob_v) = 1$ and $\mathbf{N}(v) = 1$ modulo $p.$
\end{proof}
\appendix
\section{Realising local representations globally}\label{sec:local to  global}
In this appendix we realise local representations globally, using the
potential automorphy techniques of \cite{BLGGT} and \cite{frankII}. We
will freely use the notation and terminology of \cite{BLGGT}, in
particular the notions of RACSDC, RAESDC and RAECSDC automorphic
representations $\pi$ of $\GL_n(\A_{F^+})$ and $\GL_n(\A_F)$, and the
associated $p$-adic Galois representations $r_{p,\imath}(\pi)$, which
are defined in Section 2.1 of \cite{BLGGT}.

\begin{aprop}\label{prop: local realisation over totally real field}
  Let $K/\Qp$ be a finite extension, and let
  $\rbar_K:G_K\to\GL_2(\Fpbar)$ be a continuous representation. Then
  there is a totally real field $L^+$ and a continuous irreducible
  representation $\rbar:G_{L^+}\to\GL_2(\Fpbar)$ such that
  \begin{itemize}
  \item for each place $v|p$ of $L^+$, $L^+_v\cong K$ and
    $\rbar|_{L^+_v}\cong\rbar_K$,
\item for each finite place $v\nmid p$ of $L^+$,
    $\rbar|_{G_{L^+_v}}$ is unramified,
  \item for each place $v|\infty$ of $L^+$, $\det\rbar(c_v)=-1$, where
    $c_v$ is a complex conjugation at $v$, and
  \item there is a non-trivial finite extension $\F/\F_p$ such that $\rbar(G_{L^+})=\GL_2(\F)$.
  \end{itemize}

\end{aprop}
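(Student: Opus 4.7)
The plan is to construct $L^+$ and $\rbar$ in three stages, combining a careful construction of the totally real base field with the Moret-Bailly and potential automorphy technology of \cite{BLGGT} and \cite{frankII}.

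In the first stage, I would construct the totally real base field. Let $K_0 \subset K$ be the maximal totally real subfield. Using Krasner's lemma and weak approximation, I would find a totally real number field $L^+$ such that every place $v \mid p$ of $L^+$ has completion isomorphic to $K$. The field $L^+$ can moreover be chosen to be linearly disjoint from any prescribed auxiliary finite extension of $\Q$, which will be needed to make subsequent Chebotarev-type arguments work cleanly.

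In the second stage, the heart of the argument, I would produce the global representation. The idea is to realize $\rbar$ as (a reduction of) the Galois representation attached to some modular object over $L^+$ (say a suitable Hilbert modular form or an abelian variety of $\GL_2$-type), where the local behavior at places above $p$ is forced to match $\rbar_K$. After possibly replacing $L^+$ by a totally real solvable extension, one applies the potential automorphy methods of \cite{BLGGT} to obtain a starting automorphic globalization. Then one uses a Moret-Bailly argument, applied to a moduli space with level structure and integral models encoding the prescribed local type at each place above $p$, to produce a point whose associated Galois representation reduces to a $\rbar$ with the correct restrictions $\rbar|_{G_{L^+_v}} \cong \rbar_K$. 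Unramifiedness outside $p$ and total oddness $\det \rbar(c_v) = -1$ are ensured by the cyclotomic character-style normalization and by imposing appropriate level structures in the Moret-Bailly input.

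In the third stage, I would arrange irreducibility and the image condition $\rbar(G_{L^+}) = \GL_2(\F)$ for some nontrivial $\F/\F_p$. Using Dickson's classification of finite subgroups of $\GL_2(\Fpbar)$, one can add auxiliary local conditions at finitely many primes $v \nmid p$ (chosen via Chebotarev and compatible with the Moret-Bailly step) forcing the image of $\rbar$ to contain both a regular semisimple element of sufficiently large order and a transvection-like element, which together rule out all proper subgroups other than $\GL_2(\F)$ for a suitable $\F$.

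The main obstacle will be stage two: controlling the mod $p$ reduction of the constructed globalization at every place above $p$ simultaneously, while keeping all the local completions isomorphic to $K$. Potential automorphy in general yields globalizations only after an a priori uncontrolled solvable base change, and the Moret-Bailly descent has to be set up carefully so as to preserve the local data at every $v \mid p$ at once. Arranging this compatibility, together with the large image condition, is where the bookkeeping and the linear disjointness arrangements from stage one do the real work.
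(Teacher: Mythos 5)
Your stage two is where the argument breaks, and it is also where your route departs from the paper's. The paper's proof of this proposition is purely Galois-theoretic and involves no modular objects at all: one fixes a non-trivial finite extension $\F/\F_p$ with $\rbar_K(G_K)\subset\GL_2(\F)$ and applies Proposition 3.2 of \cite{frankII} to the finite group $G=\GL_2(\F)$, taking $L^+$ to be a totally real field with $L^+_v\cong K$ at all $v|p$ (your stage one), prescribing the local data at $v|p$ to be the homomorphism $\rbar_K:G_K\to\GL_2(\F)$ itself and the conjugacy class of complex conjugation to be the non-central order-$2$ class of determinant $-1$; a further (solvable) base change then removes ramification away from $p$. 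Surjectivity onto $\GL_2(\F)$ comes for free from that construction, so your stage three is not needed. By contrast, your plan to produce $\rbar$ as the reduction of a representation attached to a Hilbert modular form or an abelian variety of $\GL_2$-type, bootstrapped by ``potential automorphy methods of \cite{BLGGT}'', is circular: the potential automorphy theorems of \cite{BLGGT} take a \emph{global} Galois representation as input and cannot manufacture a globalization of a local residual representation. Globalizing first and proving (potential) modularity second is exactly the point of the appendix --- modularity is deliberately deferred to Theorem \ref{thm: potential modularity for GL2}, which is applied only \emph{after} this proposition has produced $\rbar$.

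Even setting the circularity aside, a Moret--Bailly argument on a moduli space of polarized abelian varieties with level structure would force $\det\rbar$ to be a finite-order twist of the mod $p$ cyclotomic character, whereas the proposition imposes no condition on $\det\rbar_K$; and prescribing $\rbar|_{G_{L^+_v}}\cong\rbar_K$ at \emph{every} $v|p$ simultaneously as the reduction of a modular Galois representation is not an available off-the-shelf input --- statements of that flavour are among the things this paper is in the business of proving. The fix is to drop the modular interpretation entirely and realize the finite group $\GL_2(\F)$ as a Galois group with prescribed decomposition data, which is the much softer statement that Proposition 3.2 of \cite{frankII} supplies.
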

\begin{proof}
  Choose a non-trivial finite extension $\F/\F_p$ such that
  $\rbar_K(G_K)\subset\GL_2(\F)$. We apply Proposition 3.2 of
  \cite{frankII} where, in the notation of that
  result,
  \begin{itemize}
  \item $G=\GL_2(\F)$,
  \item we let $F=E$ be any totally real
  field with the property that if $v|p$ is a place of $E$, then
  $E_v\cong K$,
\item if $v|p$, we let $D_v=\rbar_K(G_K)$,
\item if $v|\infty$, we let $c_v$ be the nontrivial conjugacy class of order 2 in $\GL_2(\F)$.
  \end{itemize}
This produces a representation $\rbar$ which satisfies all the
required properties, except possibly for the
  requirement that $\rbar$ be unramified outside $p$, which may be
  arranged by a further (solvable, if one wishes) base change. (Note that while
  this is not stated there, the
  commutative diagram in of Proposition 3.2(4) of
  \cite{frankII} is the obvious one, where the top arrow is the natural inclusion.)
\end{proof}

We remark that the reason that we have assumed that $\F\ne\F_p$ is
that we wish to apply Proposition A.2.1 of \cite{blggU2} to conclude
that $\rbar(G_{L^+})$ is adequate. We have the following now-standard potential
modularity result.
\begin{athm}\label{thm: potential modularity for GL2}
  Let $L^+$ be a totally real field, let $M/L^+$ be a finite
  extension, and let $p>2$ be a prime. Let
  $\rhobar:G_{L^+}\to\GL_2(\Fpbar)$ be an irreducible continuous totally odd
  representation. Then there exists a finite totally real Galois
  extension $F^+/L^+$ such that
  \begin{itemize}
  \item the primes of $L^+$ above $p$ split completely in $F^+$,
  \item $F^+$ is linearly disjoint from $M$ over $L^+$, and
  \item there is a weight $0$ RAESDC (regular, algebraic, essentially
    self-dual, cuspidal) automorphic representation $\pi$ of
    $\GL_2(\A_{F^+})$ such that $\rbar_{p,\imath}(\pi)\cong
    \rhobar|_{G_{F^+}}$, and $\pi$ has level potentially prime to $p$.
  \end{itemize}

\end{athm}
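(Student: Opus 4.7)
The plan is to apply the standard Taylor potential modularity machine for $\GL_2$, as developed in \cite{BLGGT} and \cite{frankII}, with a little extra care to impose the condition that the places of $L^+$ above $p$ split completely in $F^+$. The overall strategy is to find, via a Moret--Bailly theorem, a totally real Galois extension $F^+/L^+$ and a Hilbert--Blumenthal abelian variety (HBAV) $A/F^+$ such that $A[p]\cong \rhobar|_{G_{F^+}}$ and $A[\ell]$, for an auxiliary prime $\ell \ne p$, is the residual mod $\ell$ representation of a fixed Hilbert modular eigenform $f$; then an $\ell$-adic modularity lifting theorem produces a weight $0$ RAESDC $\pi$ whose $p$-adic realisation has the required residual reduction.

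More concretely, first I would choose an auxiliary prime $\ell\ne p$ which splits in $L^+$ and is sufficiently large, in particular unramified in $M$ and large enough that the residual mod $\ell$ representation $\bar s_\ell$ attached to some fixed parallel weight $2$ Hilbert modular eigenform $f$ has adequate image. I would then consider the moduli space of HBAVs carrying mod $p$ level structure trivialising the $p$-torsion as $\rhobar$ and mod $\ell$ level structure matching $\bar s_\ell$. Moret--Bailly's theorem, in the form used in the proof of Proposition 3.2 of \cite{frankII}, produces a totally real Galois extension $F^+/L^+$ together with an $F^+$-point on this moduli space such that: the primes of $L^+$ above $p$ split completely in $F^+$, the corresponding HBAV $A$ has good reduction at all primes of $F^+$ above $p$, $F^+$ is linearly disjoint from $M$ over $L^+$, and determinants match on complex conjugations.

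The $\ell$-adic Tate module of $A/F^+$ then has residual representation equal to that of $f|_{G_{F^+}}$, which after a solvable totally real base change (which one absorbs into $F^+$) is automorphic with adequate image. An $\ell$-adic Hilbert modular Barsotti--Tate modularity lifting theorem (as in \cite{kis04} and the compatibility package of \cite{BLGGT}) applied at $\ell$ shows that the $\ell$-adic Tate module of $A/F^+$ is automorphic, coming from a RAESDC $\pi'$ of $\GL_2(\A_{F^+})$. Since HBAVs form strictly compatible systems, and Hilbert modular forms of parallel weight $2$ do as well, the $p$-adic Tate module of $A$ is also automorphic, equal to $\rbar_{p,\imath}(\pi)$ for a weight $0$ RAESDC $\pi$ of $\GL_2(\A_{F^+})$; by construction its reduction is $\rhobar|_{G_{F^+}}$, and good reduction of $A$ at primes above $p$ forces $\pi$ to have level potentially prime to $p$.

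The main obstacle is the Moret--Bailly step, and specifically the simultaneous imposition of all local conditions: complete splitting at $p$ (a strengthening of the generic $L^+_v\cong F^+_{v'}$ statement coming out of potential modularity), good reduction of the HBAV above $p$, the mod $\ell$ level structure matching $f$, and the linear disjointness from $M$. The complete splitting condition is arranged by requiring the Moret--Bailly point to have trivial local Galois action at $p$; since this is an open condition on a smooth moduli of HBAVs with trivialised $p$-torsion, it fits into the standard Moret--Bailly framework of \cite{frankII} and \cite{BLGGT}, and all other steps are routine applications of their results.
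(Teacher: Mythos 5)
Your sketch is essentially a re-derivation of the machinery that the paper simply cites: the proof in the paper consists of invoking Proposition 8.2.1 of \cite{0905.4266} (the Moret--Bailly/HBAV/switching-primes argument you outline), and the \emph{only} thing it actually verifies is that for each $v|p$ the local representation $\rhobar|_{G_{L^+_v}}$ admits a potentially Barsotti--Tate lift (via the proof of Proposition 7.8.1 of \cite{0905.4266} in the irreducible case and Lemma 6.1.6 of \cite{blggord} in the reducible case). That verification is precisely the step your proposal skips, and it is a genuine gap: to force the primes of $L^+$ above $p$ to split completely in $F^+$, the Moret--Bailly theorem requires the prescribed open locus $\Omega_v$ of the moduli space to be a \emph{non-empty} subset of its $L^+_v$-points for each $v|p$ --- i.e.\ there must exist a point over $L^+_v$ itself whose associated $p$-divisible data lifts $\rhobar|_{G_{L^+_v}}$ with the right (potentially good) reduction. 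Smoothness and openness of the moduli space give you nothing here; non-emptiness of the local condition at $p$ is exactly the assertion that $\rhobar|_{G_{L^+_v}}$ has a potentially Barsotti--Tate lift, and this is the one piece of content in the paper's proof. Your phrase ``since this is an open condition on a smooth moduli \dots it fits into the standard Moret--Bailly framework'' therefore assumes what needs to be proved.

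A secondary problem: you ask for the HBAV to have \emph{good} reduction at all primes of $F^+$ above $p$ while simultaneously demanding $F^+_w\cong L^+_v$. Together these would require a Barsotti--Tate lift of $\rhobar|_{G_{L^+_v}}$ over $L^+_v$ itself, which need not exist (e.g.\ when $\det\rhobar|_{I_{L^+_v}}$ is not the cyclotomic character); the correct local condition is potentially good reduction, consistent with the conclusion that $\pi$ has level only \emph{potentially} prime to $p$. Once you insert the existence of potentially Barsotti--Tate lifts at each $v|p$ (and weaken ``good reduction'' to ``potentially good''), your argument becomes a sketch of the proof of the cited Proposition 8.2.1 of \cite{0905.4266}, and the remaining steps (auxiliary prime $\ell$, matching $A[\ell]$ with a fixed eigenform, the $\ell$-adic modularity lifting theorem, and propagation through the compatible system) are standard as you describe.
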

\begin{proof}
  This is a special case of Proposition 8.2.1 of \cite{0905.4266},
  once one knows that for each place $v|p$, $\rhobar|_{G_{L^+_v}}$
  admits a potentially Barsotti--Tate lift. (Such a lift will be of
  type A or B in the terminology of \cite{0905.4266}.) If $\rhobar|_{G_{L^+_v}}$
  is irreducible, this is proved in the course of the proof of
  Proposition 7.8.1 of \cite{0905.4266}, and if it is reducible then
  it is an immediate consequence of Lemma 6.1.6 of \cite{blggord}.
\end{proof}
Combining Proposition \ref{prop: local realisation over totally real
  field} and Theorem \ref{thm: potential modularity for GL2}, we obtain the following result.
\begin{acor}
  \label{cor: local modular realisation over totally real field} 
Let 
  $p>2$ be prime, let $K/\Qp$ be a finite extension, and let
  $\rbar_K:G_K\to\GL_2(\Fpbar)$ be a continuous representation. Then
  there is a totally real field $F^+$ and a RAESDC automorphic representation $\pi$ of
  $\GL_2(\A_{F^+})$ such that $\rbar_{p,\imath}(\pi)$ is absolutely
  irreducible, and:
  \begin{itemize}
  \item for each place $v|p$ of $F^+$, $F^+_v\cong K$ and $\rbar_{p,\imath}(\pi)|_{F^+_v}\cong\rbar_K$,
  \item for each finite place $v\nmid p$ of $F^+$,
    $\rbar_{p,\imath}(\pi)|_{G_{F^+_v}}$ is unramified,
  \item $\pi$ has level potentially prime to $p$, and
  \item there is a non-trivial finite extension $\F/\F_{p}$ such that $\rbar_{p,\imath}(\pi)(G_{F^+})=\GL_2(\F)$.
  \end{itemize}

\end{acor}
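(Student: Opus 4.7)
The plan is to obtain $\pi$ by combining the two preceding results (Proposition A.1 and Theorem A.2) in the most straightforward way, with a minor bookkeeping step to preserve the large-image condition after the potential modularity base change.

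First, I would apply Proposition A.1 to $\rbar_K$, producing a totally real field $L^+$ and an irreducible continuous representation $\rbar:G_{L^+}\to\GL_2(\Fpbar)$ which is locally isomorphic to $\rbar_K$ at each place above $p$, unramified away from $p$, totally odd, and with image exactly $\GL_2(\F)$ for some non-trivial finite extension $\F/\F_p$. In particular, $\rbar$ is irreducible and totally odd, so it is an admissible input for Theorem A.2.

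Next, before invoking Theorem A.2, I would fix an auxiliary finite extension $M/L^+$ encoding the data we wish to preserve. Specifically, let $M=\overline{L^+}^{\ker\rbar}$ be the fixed field of the kernel of $\rbar$; this is a finite Galois extension of $L^+$. Applying Theorem A.2 to $\rbar$ with this choice of $M$, we obtain a finite totally real Galois extension $F^+/L^+$ such that: (i) the primes of $L^+$ above $p$ split completely in $F^+$; (ii) $F^+$ is linearly disjoint from $M$ over $L^+$; and (iii) there is a weight $0$ RAESDC automorphic representation $\pi$ of $\GL_2(\A_{F^+})$ with $\rbar_{p,\imath}(\pi)\cong\rbar|_{G_{F^+}}$ and $\pi$ of level potentially prime to $p$.

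It remains to check the four bulleted properties. Since $F^+$ is linearly disjoint from $M$ over $L^+$, the natural restriction map $\Gal(M\cdot F^+/F^+)\to\Gal(M/L^+)$ is an isomorphism; equivalently, $\rbar(G_{F^+})=\rbar(G_{L^+})=\GL_2(\F)$, so absolute irreducibility and the large-image condition descend to $\rbar_{p,\imath}(\pi)$. Since every place $v|p$ of $L^+$ splits completely in $F^+$, for every place $w|p$ of $F^+$ we have $F^+_w\cong L^+_{v}\cong K$ and $\rbar_{p,\imath}(\pi)|_{G_{F^+_w}}\cong\rbar|_{G_{L^+_v}}\cong\rbar_K$. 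For a finite place $w\nmid p$ of $F^+$ lying over a place $v$ of $L^+$, $\rbar|_{G_{L^+_v}}$ is unramified by construction, and hence so is its restriction to $G_{F^+_w}$. The level-prime-to-$p$ condition on $\pi$ is part of the conclusion of Theorem A.2.

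There is essentially no serious obstacle here; the only delicate point is the choice of the auxiliary field $M$ in Theorem A.2, which must be large enough to force $\rbar|_{G_{F^+}}$ to retain the full image $\GL_2(\F)$. Taking $M$ to be the splitting field of $\rbar$ is both necessary and sufficient for this, and it is compatible with the other requirements of Theorem A.2 since that result freely allows us to impose linear disjointness from any prescribed finite extension of $L^+$.
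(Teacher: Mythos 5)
Your proposal is correct and coincides with the paper's own proof: both apply Proposition \ref{prop: local realisation over totally real field} to produce $\rbar$ over $L^+$ and then invoke Theorem \ref{thm: potential modularity for GL2} with the auxiliary field $M=(\overline{L})^{\ker\rbar}$, using linear disjointness to preserve the image $\GL_2(\F)$ and complete splitting at $p$ to preserve the local condition. No further comment is needed.
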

\begin{proof}
  This follows at once by applying Theorem \ref{thm: potential
    modularity for GL2} to the representation $\rbar$ provided by
  Proposition \ref{prop: local realisation over totally real field},
  taking the auxiliary field $M$ to be $(\overline{L})^{\ker\rbar}$.
\end{proof}

\begin{athm}\label{thm: local modular mod p realisation over CM}
  Suppose that $p>2$, that $K/\Qp$ is a finite extension, and let
  $\rbar_K:G_K\to\GL_2(\Fpbar)$ be a continuous representation. Then
  there is an imaginary CM field $F$ with maximal totally real
  subfield $F^+$ and a RACSDC automorphic representation $\pi$ of
  $\GL_2(\A_F)$ such that  $\rbar_{p,\imath}(\pi)$ is absolutely
  irreducible, and:
  \begin{itemize}
  \item each place $v|p$ of $F^+$ splits in $F$,
  \item for each place $v|p$ of $F^+$, $F^+_v\cong K$,
  \item for each place $v|p$ of $F^+$, there is a place $\tv$ of $F$
    lying over $v$ such that $\rbar_{p,\imath}(\pi)|_{G_{F_\tv}}$ is isomorphic to an
    unramified twist of $\rbar_K$,
  \item $\rbar_{p,\imath}(\pi)$ is unramified outside of the places dividing $p$,
  \item $\zeta_p\notin F$,
  \item $\rbar_{p,\imath}(\pi)({G_{F(\zeta_p)}})$ is adequate, and
  \item the projective image of $\rbar$ is not isomorphic to $A_4$.
    \end{itemize}

\end{athm}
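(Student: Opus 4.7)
The plan is to combine Corollary A.3 with a quadratic CM base change. First, apply Corollary A.3 to $\rbar_K$ to produce a totally real field that we temporarily call $F^+$ and a RAESDC automorphic representation $\pi_0$ of $\GL_2(\A_{F^+})$ such that $\rbar_{p,\imath}(\pi_0)$ is absolutely irreducible, unramified away from $p$, isomorphic to $\rbar_K$ at each place $v \mid p$ of $F^+$ (which satisfies $F^+_v \cong K$), and has image equal to $\GL_2(\F)$ for some nontrivial extension $\F/\F_p$.

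Next, I would construct an imaginary quadratic CM extension $F/F^+$ satisfying: (i) every place $v \mid p$ of $F^+$ splits in $F$; (ii) $F/F^+$ is unramified outside the set of places dividing $p$; (iii) $\zeta_p \notin F$; (iv) $F$ is linearly disjoint from the fixed field of $\ker\rbar_{p,\imath}(\pi_0)$ adjoined with $\zeta_p$, viewed inside $\overline{F^+}/F^+$. Such an $F$ exists by prescribing local behaviour of a quadratic Hecke character of $F^+$ (Grunwald--Wang): splitting at places above $p$ is a local condition there, $\zeta_p \notin F$ is ensured by avoiding the one quadratic CM subextension of $F^+(\zeta_p)$ (noting $\zeta_p \notin F^+$ as $F^+$ is totally real and $p > 2$), and condition (iv) removes a finite list of bad quadratic extensions. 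Then (iv) and Proposition A.2.1 of \cite{blggU2} guarantee that $\rbar_{p,\imath}(\pi_0)(G_{F(\zeta_p)})$ still equals $\GL_2(\F)$, which is adequate since $\F \neq \F_p$.

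I would then take $\pi_1 := BC_{F/F^+}(\pi_0)$, the cyclic base change provided by the work of Langlands and Arthur--Clozel. Cuspidality of $\pi_1$ follows because $\rbar_{p,\imath}(\pi_0)$ has image $\GL_2(\F)$ which is not induced from any index-$2$ subgroup, so $\pi_0$ is not isomorphic to its twist by the quadratic character of $F/F^+$. The representation $\pi_1$ is regular algebraic (since $\pi_0$ is), satisfies $\pi_1^c \cong \pi_1$ automatically, and its associated Galois representation is $\rbar_{p,\imath}(\pi_1) = \rbar_{p,\imath}(\pi_0)|_{G_F}$, which by (ii) is unramified outside $p$ and, for each $v \mid p$ and any place $\tv$ of $F$ over $v$ (using that $v$ splits, so $F_{\tv} = F^+_v \cong K$), restricts to $\rbar_K$. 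To promote $\pi_1$ from merely essentially conjugate self-dual to strictly conjugate self-dual (RACSDC), twist by a finite-order Hecke character $\chi$ of $\A_F^\times / F^\times$ chosen so that $\omega_{\pi_1}\chi^2 = 1$ and so that $\chi$ is unramified at every place above $p$. Setting $\pi := \pi_1 \otimes \chi$, this makes $\pi$ self-dual; combined with $\pi^c \cong \pi$ this gives $\pi^\vee \cong \pi^c$, so $\pi$ is RACSDC, and the local representations $\rbar_{p,\imath}(\pi)|_{G_{F_{\tv}}}$ become unramified twists of $\rbar_K$, as allowed by the statement.

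The main obstacle will be the construction of the twisting character $\chi$ in the final step: producing a global Hecke character whose square equals $\omega_{\pi_1}^{-1}$ and which is unramified at places above $p$ requires delicate Grunwald--Wang style arguments, and one may need to enlarge $F$ (by a further solvable CM base change, which preserves automorphy and large image) in order to make $\omega_{\pi_1}^{-1}$ a square in the idele class group while leaving the local behaviour at $p$ untouched. All the remaining bulleted conditions are then inherited directly from the construction: splitting at $p$ and $\zeta_p \notin F$ by the choice of $F$; the local isomorphism at $p$ and unramifiedness outside $p$ from Corollary A.3 and the compatibility of base change with restriction of Galois representations; absolute irreducibility and adequacy of $\rbar_{p,\imath}(\pi)(G_{F(\zeta_p)}) = \GL_2(\F)$ from the linear disjointness and Proposition A.2.1 of \cite{blggU2}.
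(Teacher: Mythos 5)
Your overall strategy coincides with the paper's: apply Corollary \ref{cor: local modular realisation over totally real field}, pass to a totally imaginary quadratic extension split at $p$ and linearly disjoint from the relevant fixed field, base change, and then twist to achieve conjugate self-duality. The first three steps are fine. However, the final twisting step --- which you yourself flag as the main obstacle --- contains a genuine gap, in two respects. First, the condition you impose on $\chi$, namely $\omega_{\pi_1}\chi^2=1$, is not the right one. For $\pi=\pi_1\otimes\chi$ one has $\pi^c\cong\pi_1\otimes\chi^c$ and $\pi^\vee\cong\pi_1\otimes\omega_{\pi_1}^{-1}\chi^{-1}$, so conjugate self-duality requires $\chi\chi^c=\omega_{\pi_1}^{-1}$, a norm-type equation, not $\chi^2=\omega_{\pi_1}^{-1}$. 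Your fallback ``$\pi$ is self-dual, and $\pi^c\cong\pi$, hence $\pi^\vee\cong\pi^c$'' does not work either, because $\pi_1^c\cong\pi_1$ is destroyed by twisting unless $\chi^c\cong\chi$, which you do not arrange. The equation $\chi\chi^c=\omega_{\pi_1}^{-1}$ is exactly what Lemma 4.1.5 of \cite{cht} is designed to solve (subject to a sign condition at infinity, which holds here by oddness), and this is the tool the paper uses.

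Second, and more seriously, your requirement that $\chi$ be unramified at \emph{every} place above $p$ is in general incompatible with the twisting equation. Restricting $\chi\chi^c=\omega_{\pi_1}^{-1}$ to $G_{F_{\tv}}$ forces $\chi|_{G_{F_{\tv}}}\cdot\chi|_{G_{F_{\tv^c}}}$ to equal $\varepsilon^{-1}\det r_{p,\imath}(\pi_0)^{-1}|_{G_{F^+_v}}$, whose restriction to inertia is a finite-order character that need not be trivial (it reduces to $\omega\det\rbar_K|_{I_K}$, and $\rbar_K$ is arbitrary). So $\chi$ cannot be unramified at both $\tv$ and $\tv^c$ in general, and no solvable base change fixes this, since the local obstruction at $v$ persists. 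The statement only demands the unramified-twist property at \emph{one} place $\tv$ above each $v|p$, and the paper exploits this via an auxiliary finite-order character $\theta$ (provided by Lemma 4.1.1 of \cite{cht}) with $\theta|_{G_{F_{\tv}}}=1$ and $\theta|_{G_{F_{\tv^c}}}=\varepsilon\det r_{p,\imath}(\pi)|_{G_{F^+_v}}$; twisting by $\psi\theta^{-1}$ concentrates all the forced ramification at the places $\tv^c$ and makes the right-hand side of the $\psi\psi^c$ equation crystalline (indeed trivial) at the $\tv$, so that $\psi$ can be taken unramified there. Without some device of this kind your construction does not produce the required unramified twist of $\rbar_K$ at $\tv$.
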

\begin{proof}
  Let $\pi$ be the RAESDC automorphic representation of
  $\GL_2(\A_{F^+})$ provided by
  Corollary \ref{cor: local modular realisation over totally real
    field}. Choose a totally imaginary quadratic extension $F/F^+$
  which is linearly disjoint from $(\overline{F})^{\ker\rbar_{p,\imath}(\pi)}(\zeta_p)$
  over $F$ and in which all places of $F^+$ lying over $p$
  split. For each place $v|p$ of $F^+$, choose a place $\tv$ of $F$
  lying over $v$. Choose a finite order character $\theta:G_F\to\Qpbartimes$
  such that for each place $v|p$ of $F^+$,
  $\theta|_{G_{F_\tv}}=1$, and $\theta|_{G_{F_{\tv^c}}}=\varepsilon\det r_{p,\imath}(\pi)|_{G_{F^+_v}}$. (The existence of such
  a character is guaranteed by Lemma 4.1.1 of \cite{cht}.) Let
  $\theta_{F^+}$ denote $\theta$ composed with the transfer map
  $G_{F^+}^\ab\to G_F^\ab$, so that $\theta_{F^+}|_{G_F} = \theta \theta^c$.

  Choose a finite order character $\psi:G_F\to\Qpbartimes$ such
  that \[\psi\psi^c=\varepsilon^{-1}(\theta_{F^+}(\det r_{p,\imath}(\pi))^{-1})|_{G_F}\]
  and such that each $\psi|_{G_{F_\tv}}$ is unramified.  (The existence of
  such a character follows by applying Lemma 4.1.5 of
  \cite{cht} to
  $\varepsilon^{-1}(\theta_{G_{F^+}}\det\rbar_{p,\imath}(\pi))^{-1}|_{G_F}$,
  choosing the integers $m_\tau$ of \emph{loc.~cit.} to be $0$. Note
  that by virtue of the
  choice of $\theta$ above, the character $\varepsilon^{-1}(\theta_{F^+}(\det
  r_{p,\imath}(\pi))^{-1})|_{G_F}$ is crystalline at all places of $F$
  dividing $p$.)

Then the representation $r:=r_{p,\imath}(\pi)|_{G_F}\otimes\psi\theta^{-1}$
satisfies $r^c\cong r^\vee\varepsilon^{-1}$, so by base change there is
a RACSDC automorphic representation of $\GL_2(\A_F)$ satisfying all
the required properties, except possibly the property that
$\rbar_{p,\imath}(\pi)$ is unramified outside of $p$, which may be
arranged by a further solvable base change. (Note that the claims that
$\rbar_{p,\imath}(\pi)({G_{F(\zeta_p)}})$ is adequate and that the projective image of $\rbar$ is not isomorphic to $A_4$ are an immediate
consequence of Proposition \ref{prop:adequacy for n=2} and the fact
that $\rbar_{p,\imath}(\pi)(G_F)=\GL_2(\F)$ for some non-trivial
extension $\F/\F_p$.)
\end{proof}
\begin{acor}
  \label{cor: the final local-to-global result} Suppose that $p>2$,
  that $K/\Qp$ is a finite extension, and let
  $\rbar_K:G_K\to\GL_2(\Fpbar)$ be a continuous representation. Then
  there is an imaginary CM field $F$ and a continuous irreducible
  representation $\rbar:G_F\to\GL_2(\Fpbar)$ such that
  \begin{itemize}
  \item each place $v|p$ of $F^+$ splits in $F$, and has $F^+_v\cong K$,
  \item for each place $v|p$ of $F^+$, there is a place $\tv$ of $F$
    lying over $F^+$ with $\rbar|_{G_{F_\tv}}$ isomorphic to an
    unramified twist of $\rbar_K$,
  \item $[F^+:\Q]$ is even,
  \item $F/F^+$ is unramified at all finite places,
  \item $\zeta_p\notin F$,
  \item $\rbar$ is unramified outside of $p$,
  \item $\rbar$ is automorphic in the sense of Section \ref{sec: Galois repns}, 
  \item $\rbar(G_{F(\zeta_p)})$ is adequate, and
  \item the projective image of $\rbar$ is not isomorphic to $A_4$.
  \end{itemize}

\end{acor}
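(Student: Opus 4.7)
The plan is to combine Theorem~\ref{thm: local modular mod p realisation over CM} with a carefully chosen solvable totally real base change to enforce the three properties not already provided by that theorem: that $[F^+:\Q]$ is even, that $F/F^+$ is unramified at all finite places, and that $\rbar$ is automorphic in the sense of Section~\ref{sec: Galois repns}, i.e.\ arises from an algebraic automorphic form on a definite unitary group $G$ as in Section~\ref{sec: Unitary groups}.

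First I apply Theorem~\ref{thm: local modular mod p realisation over CM} to $\rbar_K$ to obtain an auxiliary imaginary CM field $F_0$ with maximal totally real subfield $F_0^+$, together with a RACSDC automorphic representation $\pi_0$ of $\GL_2(\A_{F_0})$ satisfying the listed conclusions there. Next I produce a solvable totally real extension $L^+/F_0^+$ with the property that, setting $L := F_0 \cdot L^+$, the following hold: (i) every place $v|p$ of $F_0^+$ splits completely in $L^+$, so that the completions of $L^+$ above $p$ remain isomorphic to $K$ and places above $p$ continue to split in $L/L^+$; (ii) $L/L^+$ is unramified at every finite place; (iii) $[L^+:\Q]$ is even; and (iv) $L^+$ is linearly disjoint over $F_0^+$ from $\overline{F_0}^{\ker \rbar_{p,\imath}(\pi_0)}(\zeta_p)$. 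The existence of such an $L^+$ is a standard class-field-theoretic construction (compare Lemma 4.1.2 of \cite{cht}): at each finite place of $F_0^+$ at which $F_0/F_0^+$ ramifies, one inserts enough local ramification in $L^+$ to absorb it, while simultaneously imposing complete splitting at the finitely many $p$-adic places and linear disjointness from the prescribed auxiliary field; a further adjoining of a totally real quadratic extension, split at all places of concern, adjusts the parity of $[L^+:\Q]$ if necessary.

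Now by solvable cyclic base change (Arthur--Clozel), $\pi_0$ pulls back to a RACSDC automorphic representation $\pi$ of $\GL_2(\A_L)$ with $r_{p,\imath}(\pi) \cong r_{p,\imath}(\pi_0)|_{G_L}$. Set $\rbar := \rbar_{p,\imath}(\pi)$. Condition (iv) together with Proposition~\ref{prop:adequacy for n=2} ensures that $\rbar(G_{L(\zeta_p)}) = \rbar_{p,\imath}(\pi_0)(G_{F_0(\zeta_p)})$ remains adequate, hence absolutely irreducible, and in particular $\zeta_p \notin L$; the other local conditions at places above $p$ (namely that the local field is $K$ and that the restriction of $\rbar$ is an unramified twist of $\rbar_K$) and unramifiedness outside $p$ pass through the base change by construction. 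Finally, conditions (ii), (iii), and the splitting at $p$-adic places permit the construction of the definite unitary group $G$ over $\cO_{L^+}$ as in \ref{para: models}, and Corollaire 5.3 of \cite{labesse} then descends $\pi$ from $\GL_2(\A_L)$ to an automorphic representation of $G(\A_{L^+})$, producing a non-zero localisation $S_{\lambda,\tau}(U,\cO)_\m$ for suitable data; this is precisely automorphy of $\rbar$ in the sense of Section~\ref{sec: Galois repns}. Taking $F := L$ and $F^+ := L^+$ completes the construction.

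The main obstacle is the construction of $L^+$ itself: simultaneously achieving unramifiedness of $L/L^+$ at all finite places, complete splitting at all primes above $p$, linear disjointness from a prescribed finite Galois extension of $F_0$, and evenness of $[L^+:\Q]$, is a delicate class-field-theoretic task. The remaining steps — cyclic base change, preservation of local and adequacy properties, and Labesse's descent — are standard once the arithmetic of the field $L^+$ has been set up correctly.
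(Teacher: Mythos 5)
Your proposal is correct and follows essentially the same route as the paper: the paper's proof likewise applies Theorem \ref{thm: local modular mod p realisation over CM} and then invokes base change between $\GL_2$ and unitary groups (via section 2 of \cite{blggU2}), noting that a further solvable base change arranges the evenness of $[F^+:\Q]$ and the unramifiedness of $F/F^+$ at finite places without disturbing the other conditions. You have simply spelled out the class-field-theoretic construction of the auxiliary solvable extension and the descent to the unitary group in more detail than the paper does.
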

\begin{proof}
  This follows immediately from Theorem \ref{thm: local modular mod p realisation
    over CM} and the theory of base change between $\GL_2$ and unitary
  groups, cf.\ section 2 of \cite{blggU2} (note that we can
  make a finite solvable base change to ensure that $[F^+:\Q]$ is even
  and $F/F^+$ is unramified at all finite places without affecting the
  other conditions).
\end{proof}

\section{Errata for \cite{kisinfmc}}\label{erratakisinfmc}

In this appendix we give some errata for \cite{kisinfmc}.  
We are very grateful to Kevin Buzzard, Wansu Kim, Vytautas Paskunas, and Fabian Sander for pointing these out.
Unless otherwise mentioned all references are to \cite{kisinfmc}, and we freely use the notation of that paper.
\begin{apara} In (1.1.6), the multiplicity $\mu_{n,m}(\rhobar)$ when 
$\rhobar \sim \left(\smallmatrix \mu_{\lambda} & * \\ 0 & \mu_{\lambda}\endsmallmatrix \right)\otimes\omega^m$
(that is $n=p-2$ and $\lambda = \lambda'$) is not correctly defined in some cases. Namely, this multiplicity is defined to be $1$ if (the class of) $*$ is non-trivial. 
If $*$ is trivial, the multiplicity is not defined explicitly, but there is a speculation that it is $2.$ 
The actual multiplicities have been computed by Fabian Sander \cite{1212.4978}, and are $1$ when $*$ is non-trivial and ramified, 
$2$ when $*$ is non-trivial and unramified, and $4$ when $*$ is trivial. 
For the purposes of correcting the argument in \cite{kisinfmc}, $\mu_{n,m}(\rhobar)$ should be defined implicitly whenever $n=p-2$
and $\lambda = \lambda',$ as was done in the case when $*$ is trivial.

The mistake in the argument occurs in the last paragraph of (1.7.5), where it is claimed that ``we have just seen that (1.7.6) is a closed immersion''. 
The argument only shows that (1.7.6) is a homeomorphism onto its image, which is an isomorphism over generic points, but it can have some non-reduced 
fibres.

To correct this the last claim in the statement of (1.7.5), regarding the formal smoothness of $R^{\ord}$ should be deleted, 
and in the statement of (1.7.14) ``unless $\rhobar \sim \left(\smallmatrix \mu_{\lambda} & 0 \\ 0 & \mu_{\lambda'}\endsmallmatrix \right)\otimes\omega^m$'' should be replaced by ``unless $\rhobar \sim \left(\smallmatrix \mu_{\lambda} & * \\ 0 & \mu_{\lambda'}\endsmallmatrix \right)\otimes\omega^m$ and either the class of $*$ is trivial, or $\lambda = \lambda'.$'' This change has no effect on any of the arguments which follow and, in particular, does not effect the statement of the main theorems.
\end{apara} 

\begin{apara} In the definition of the functor in Lemma (1.7.4) add the condition that $G_{\Q_p}$ acts on $L_A\otimes_A\F$ via $\omega_1.$
\end{apara}

\begin{apara} In (2.2) add the condition that $\NN(v) \neq  -1 (p)$ for $v \in \Sigma.$ Otherwise the character $\gamma_v$ in (2.2.10) may fail to be unique. In the applications one can reduce to this case by base change, and even assume that $\NN(v) = 1 (p)$ for $v \in \Sigma.$
\end{apara}

\begin{apara} The proof of Proposition (2.2.15) implicitly uses that for $v \in \Sigma,$ the rings $\bar R_v^{\sq,\psi}/\pi$ are irreducible, and generically reduced. The proof of this is practically identical to the proof of Lemma (1.7.5), using the formal smoothness proved in Lemma 2.6.3 of 
\cite{kis04}, and the fact that the representation at a generic point of $\Spec \bar R_v^{\sq,\psi}/\pi$ cannot be scalar. 
\end{apara}

\begin{apara} Lemma (2.2.1) is not correct; there are some $\rhobar$ with small image which provide counterexamples. 
The mistake is in the first line of the second paragraph of the proof, where it is asserted that if $g'$ satisfies (2.2.2) then so does $gg'.$ 
Replacing $g$ by $gg'$ does not change the left hand side of (2.2.2), but may change the right hand side.

This lemma is used to show that the conditions imposed on the compact open subgroup $U = \prod_v U_v$ and the 
set of primes where $U_v$ is not maximal compact, can be satisfied in the application to the main theorem (2.2.18). 
More precisely, the conditions are (2.1.2): that 
$$ (U(\A_F^f)^\times\cap t D^\times t^{-1})/F^\times = 1.$$
for all $t \in (D\otimes_F\A_F^f)^\times,$ and (2.2)(4): that if $U_v$ is not maximal compact then 
$1 - \NN(v) \in \F^\times,$ and that the ratio of the eigenvalues $\rhobar(\Frob_v)$ is not in $\{1, \NN(v), \NN(v)^{-1}\}.$
We explain two ways to correct this, one which works when $p > 3,$ and one which works in general:

If $p > 3,$ one can require in all of \S 2 that $p$ split completely in $F$ (which is in any case assumed after (2.2.10) and in the main theorem), 
drop the condition (2.1.2), and require that $U_v$ is maximal compact for all $v$ (so that (2.2)(4) is vacuous). 
Indeed (2.1.2) is needed only to show in (2.1.4) that $S_{\sigma,\psi}(U_{\Delta},A)_{\mathfrak m}$ is a free $A[\Delta]$-module. 
The proof given there works provided that the finite group $(U(\A_F^f)^\times\cap t D^\times t^{-1})/F^\times$ has prime to $p$ order. 
But this condition can fail only if $F(\zeta_p)$ is a quadratic extension of $F,$ which implies $p=3,$ since $p$ splits completely in $F.$

To correct the argument for any $p,$ maintain the condition (2.1.2), but require in (2.2)(4) only that $1 - \NN(v) \in \F^\times,$ and that the ratio of the eigenvalues $\rhobar(\Frob_v)$ is not $\NN(v)^{\pm 1},$ which is the usual condition, for which the existence of $v$ is 
guaranteed by \cite{MR1605752}, Lemma 4.11. Let 
$R \subset S\backslash \Sigma_p$ be the set of primes such that the eigenvalues of $\rhobar(\Frob_v)$ are equal. Then 
$S_{\sigma,\psi}(U,\O)\otimes_{\Z_p}\Q_p$ is a $\mathbb{T}_{\sigma,\psi}(U)_{\m}[1/p]$-module of rank $2^{|R|}.$
\footnote{Alternatively, we could, as in the present paper, omit the operators $U_{\pi_w},$ $w \in S\backslash \Sigma_p$ from the definition 
of $\mathbb{T}_{\sigma,\psi}(U)_{\m}$ in (2.1.5), in which case the rank would be $2^{|R|}$ with $R = S\backslash \Sigma_p.$} 
We now explain the modifications necessary to the proof which involve, as in the present paper, keeping track of some factors 
of $2^{|R|}.$

Replace Lemma (2.2.11) with 
\end{apara}

\begin{alemma}\label{corrected2.2.11} Let $\mathfrak p \in \Spec \bar R_{\infty}$ be a prime in the support of $M_{\infty}.$ 
Then $$\dim_{\kappa(\mathfrak p)} M_{\infty}\otimes_{\bar R_{\infty}}\kappa(\mathfrak p) \geq 2^{|R|},$$
with equality if $\mathfrak p$ is a minimal prime of $\bar R_{\infty}.$ 
Moreover, the following conditions are equivalent. 
\begin{enumerate}
\item $M_{\infty}$ is a faithful $\bar R_{\infty}$-module 
\item $M_{\infty}$ is a faithful $\bar R_{\infty}$-module of rank $2^{|R|}$ at all generic points of $R_{\infty}.$
\item $e(\bar R_{\infty}/\pi \bar R_{\infty}) = 2^{-|R|}e(M_{\infty}/\pi M_{\infty}, \bar R_{\infty}/\pi \bar R_{\infty}).$
\item $e(\bar R_{\infty}/\pi \bar R_{\infty}) \leq 2^{-|R|}e(M_{\infty}/\pi M_{\infty}, \bar R_{\infty}/\pi \bar R_{\infty}).$
\end{enumerate}
If these conditions hold, and $\rho: G_{F,S} \rightarrow \GL_2(\O)$ is a deformation of $\bar\rho$ 
such that for $v \in \Sigma_p,$ $\rho|_{I_v}$ is an extension of $\gamma_v$ by $\gamma_v(1)$ if $v\nmid p,$ and $\rho|_{G_{F_v}}$ is 
potentially semi-stable of type $(k,\tau_v,\psi)$ if $v|p,$ then $\rho$ is modular, 
and arises from an eigenform in $S_{\sigma, \psi}(U,\O)\otimes_{\O}E.$
\end{alemma}
\begin{proof} Let $\O\lps \Delta_{\infty} \rps = \O\lps y_1,\dots, y_{h+j}\rps,$ as in the proof of Lemma (2.2.11). 
Since $M_{\infty}$ is finite flat over $\O\lps \Delta_{\infty} \rps,$ its support in $\Spec \bar R_{\infty}$ is a union of irreducible 
components, and any such irreducible component $Z \subset \Spec \bar R_{\infty}$ surjects onto $\Spec \O\lps \Delta_{\infty} \rps.$

To prove the first statement, it suffices to prove it in the case of minimal primes of the support of $M_{\infty},$ which follows once we show that $M_{\infty}$ is free of rank 
$2^{|R|}$ over the smooth locus of any $Z$ as above. The argument for this is analogous to that in the Lemma \ref{key patching lemma II} 
of the present paper: It suffices to show that a prime $\mathfrak p \in Z$ which maps to $(y_1,\dots,y_{h+j})$ in $\Spec \O\lps \Delta_{\infty}\rps$ 
is a smooth point of $Z,$ or equivalently that $\mathfrak p$ maps to a smooth point of $\Spec \bar R^{\sq,\psi}_v[1/p]$ for each $v|p.$ 

Let $\WD(\rho_{\mathfrak p}|_{G_{F_v}})$ denote the Weil-Deligne representation
attached to $\rho_{\mathfrak p}|_{G_{F_v}},$
and let $\pi$ be the automorphic representation of $D^\times$ corresponding to $\mathfrak p.$
If $\pi_v$ is a twist of an unramified principal series then $\WD(\rho_{\mathfrak p}|_{G_{F_v}})$ is a twist of a pure, unramified representation 
by \cite{MR2327298} and \cite{MR0332791}
\footnote{Patrick Allen has pointed out that the smoothness in this case may be deduced from the fact that $\pi$ and hence $\pi_v$ has a Whittaker model, without using the Ramanujan conjecture.}.
 If $\pi_v$ is a twist of a Steinberg representation, then $\WD(\rho_{\mathfrak p}|_{G_{F_v}})$ 
satisfies the Monodromy-Weight conjecture by \cite{MR2551990}. The description of the rings $\bar R^{\sq,\psi}_v$ in the appendix of 
\cite{kisinfmc} shows that this implies $\mathfrak p$ maps to a smooth point of $\Spec \bar R^{\sq,\psi}_v[1/p].$

Now (1)-(4) and the final claim follow as in the proof of (2.2.11).
\end{proof}

Replace Lemma (2.2.15) with 

\begin{alemma}\label{corrected2.2.15}
The $\bar R_{\infty}$-module $M^i_{\infty}/M^{i-1}_{\infty}$ is non-zero if and only if for each $v|p$ we have $\mu_{n_{i,v}, m_{i,v}}(\bar \rho|_{G_{F_v}}) \neq 0.$ 
Suppose this condition holds. Then  for any prime $\mathfrak p \in \Spec \bar R_{\infty}/\pi\bar R_{\infty}$ in the support 
of $M^i_{\infty}/M^{i-1}_{\infty},$ we have 
$$\dim_{\kappa(\mathfrak p)} M^i_{\infty}/M^{i-1}_{\infty}\otimes_{\bar R_{\infty}}\kappa(\mathfrak p) \geq 2^{|R|}.$$
Moreover, if for each $v|p$ we have 
$\bar\rho|_{G_{F_v}} \nsim \left(\smallmatrix \omega \chi & * \\ 0 & \chi \endsmallmatrix \right)$ 
for any character $\chi: G_{F_v} \rightarrow \F^\times, $ then 

$$ 2^{-|R|}e(M^i_{\infty}/M^{i-1}_{\infty}, \bar R_{\infty}/\pi \bar R_{\infty}) \geq  
e_{\Sigma}\prod_{v|p} \mu_{n_{i,v}, m_{i,v}}(\bar \rho|_{G_{F_v}}) := e_{\Sigma_p}$$
where 
$$ e_{\Sigma} : = \prod_{v \in \Sigma} e(\bar R_v^{\sq,\psi}/\pi \bar R_v^{\sq,\psi}). $$
\end{alemma}
\begin{proof} The first claim is proved in Lemma (2.2.15), which also proves the support of $M^i_{\infty}/M^{i-1}_{\infty}$ is 
all of $\Spec \bar R^i_{\infty}.$ The third claim follows from the second, just as in the proof of Lemma (2.2.15).

To prove the second claim, we remark that by \cite{gee061} \S 4.6, there is a smooth, irreducible $E$-representation (as always extending $E$ if necessary) 
$\tilde \sigma_i$ of $\prod_{v|p} \GL_2(\kappa(v))$ such that $\sigma_i$ is a Jordan-H\"older factor of the reduction of $\tilde \sigma,$ 
and $S_{\tilde \sigma_i,\psi}(U,\O)_{\m}\otimes \F \iso S_{\sigma_i,\psi}(U,\O)_{\m}$ (see Lemma 4.6.3 and \emph{cf.}\ the proof of 4.6.5, 4.6.6 of {\em loc.~cit})
\footnote{The use of these auxiliary smooth representations can be avoided by working with unitary groups as in the present paper.}
. 
Now the second claim follows by applying Lemma \ref{corrected2.2.11} with $\sigma = \tilde\sigma_i.$
\end{proof}

Finally, in the proofs of Corollary (2.2.17) and Lemma (2.3.1) all the expressions 
$e(M_{\infty}/\pi M_{\infty}, \bar R_{\infty}/\pi\bar R_{\infty})$ and $e(M_{\infty}^i/\pi M_{\infty}^i, \bar R_{\infty}/\pi\bar R_{\infty})$ 
should be multiplied by $2^{-|R|}.$
\bibliographystyle{amsalpha}
\bibliography{geekisin}

\end{document}